\newcommand{\R}{\mathbb{R}}
\newcommand{\e}{\varepsilon}
\renewcommand{\H}{\mathcal{H}}
\newcommand{\E}{\mathcal{E}}
\renewcommand{\L}{\mathcal{L}}
\newcommand{\lag}{\mathcal{\ell}}
\newcommand{\tail}{\operatorname{Tail}}
\newcommand{\supp}{\operatorname{supp}}
\newtheorem{theorem}{Theorem}[section]
\newtheorem{proposition}{Proposition}[section]
\newtheorem{lemma}{Lemma}[section]
\newtheorem{corollary}{Corollary}[section]
\theoremstyle{definition}
\newtheorem{definition}{Definition}
\newtheorem{remark}{Remark}[section]
\title{Harnack's inequality for parabolic nonlocal equations}
\author{Martin Str\"omqvist}
\address{Martin Str{\"o}mqvist\\Department of Mathematics, Uppsala University\\
SE-751 06 Uppsala, Sweden}
\email{martin.stromqvist@math.uu.se}
\keywords{Nonlocal parabolic equations, Harnack inequalities, Local boundedness}
\subjclass[2010]{35K10, 35B65, 35R11}
\begin{document}

\begin{abstract}
The main result of this paper is a nonlocal version of Harnack's inequality for a class of parabolic nonlocal equations. We additionally establish a weak Harnack inequality as well as local boundedness of solutions. None of the results require the solution to be globally positive.  
\end{abstract}

\maketitle

\section{Introduction and main results}

The purpose of this paper is to establish a Harnack inequality for weak solutions to equations of the type 
\begin{equation}\label{maineq}
		\partial_t u(x,t) + \L u(x,t) = 0\quad\text{in }\R^n\times(0,T),
\end{equation}
where 
\[
\L u(x,t) = \text{P.V.}\int_{\R^n}(u(x,t)-u(y,t))K(x,y,t)dy. 
\]
We assume that $K$ is symmetric with respect to $x$ and $y$ and satisfies, for some $\Lambda\ge 1$ and $s\in(0,1)$, the ellipticity condition   
\begin{equation}\label{ellipticity}
\frac{\Lambda^{-1}}{|x-y|^{n+2s}}\le K(x,y,t) \le \frac{\Lambda}{|x-y|^{n+2s}},
\end{equation}
uniformly in $t\in (0,T)$. When 
\[
K(x,y,t) = \frac{C(n,s)}{|x-y|^{n+2s}}, 
\]
for appropriate choice of $C(n,s)$, $\L$ is the fractional Laplacian and \eqref{maineq} is called the fractional heat equation. Equations of the type \eqref{maineq} appear for instance in the study of Levy processes as well as in signal and image processing. 

\subsection{Notation}
Our estimates feature a nonlocal quantity defined below, called the parabolic tail. The time dependence in the parabolic tails is one of the main difficulties that arise in the parabolic setting compared to the elliptic.  
\begin{definition}\label{def_tail}
If $v$ is a measurable function on $\R^n\times(0,T)$, and $x_0\in \R^n$, $r>0$, $0<t_1<t_2<T$, the parabolic tail of $v$ with respect to $x_0,r,t_1,t_2$ is defined by 
\begin{equation}\label{theTail}
\tail(v;x_0,r,t_1,t_2) = \frac{r^{2s}}{t_2-t_1}\int_{t_1}^{t_2}\int_{\R^n\setminus B_r(x_0)}\frac{|v(x,t)|}{|x-x_0|^{n+2s}}dxdt.
\end{equation}
We also define the parabolic supremum tail of $v$ with respect to $x_0,r,t_1,t_2$ by 
\begin{equation}\label{thesupTail}
\tail_\infty(v;x_0,r,t_1,t_2) = r^{2s}\sup_{t_1<t<t_2}\int_{\R^n\setminus B_r(x_0)}\frac{|v(x,t)|}{|x-x_0|^{n+2s}}dx.
\end{equation}
\end{definition}

For $x_0\in \R^n$ and $r>0$, $B_r(x_0)$ denotes the ball in $\R^n$ of radius $r$ and center $x_0$. When the point $x_0$ is clear from the context we simply write $B_r$. For $t_0\in (r^{2s},T-r^{2s})$, we define the parabolic cylinders  
\begin{align*}
&U^-(r) = U^-(x_0,t_0,r) = B_r(x_0)\times(t_0-r^{2s},t_0),\\
&U^+(r) = U^+(x_0,t_0,r) = B_r(x_0)\times(t_0,t_0 + r^{2s}).
\end{align*}
We denote the positive and negative parts of a function $v(x,t)$ by 
\[
v_+(x,t) = \max\{v(x,t),0\},\quad v_-(x,t) = \max\{-v(x,t),0\}.
\]
The measure $K(x,y,t)dxdy$ occurs frequently in our proofs and, for the sake of brevity, we shall often use the notation 
\[
d\mu = d\mu(x,y,t) = K(x,y,t)dxdy. 
\]
Throughout the paper, $C$ will denote a generic positive constant depending only on $n,s,\Lambda$. 

\subsection{Main results and overview of related literature}
Theorems \ref{Harnack}-\ref{thm_localboundfinal} below are the main results of the paper. Note that the solution is not required to be nonnegative globally.  To the authors best knowledge, they are new even for the fractional heat equation. For operators of the type in \eqref{maineq}, that may depend on time and possess no regularity other than the ellipticity condition \eqref{ellipticity}, Theorem \ref{Harnack}, \ref{thm_localbound} and \ref{thm_localboundfinal} seem to be new even in the context of globally positive solutions. 
\begin{theorem}[Harnack inequality]\label{Harnack} 
Let $0<r<R/2$, let $t_0>r^{2s}$ and let 
\[t_1 = t_0+2r^{2s}-\alpha(r/2)^{2s},\quad\text{for some }\alpha\in (1,2^{2s}). \]
Suppose that $t_1<T$ and that $u$ is a solution to \eqref{maineq} such that  
\[
u\ge 0\text{ in } B_R(x_0)\times(t_0-r^{2s},t_1). 
\]
Then
\begin{align*}
\sup_{U^-(x_0,t_0,r/2)}u\le C\left(\inf_{U^-(x_0,t_1,r/2)}u + \left(\frac{r}{R}\right)^{2s}\tail(u_-;x_0,R,t_0-r^{2s},t_1)\right),  
\end{align*}
where $C$ depends on $n,s,\Lambda$ and $\alpha$.


\end{theorem}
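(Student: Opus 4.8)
The plan is to follow the classical Moser iteration scheme adapted to the nonlocal parabolic setting, splitting the proof into an $L^\infty$-$L^p$ estimate for the supremum on $U^-(x_0,t_0,r/2)$, a lower bound estimate for the infimum on $U^-(x_0,t_1,r/2)$, and a Bombieri–Giusti-type argument (a measure-theoretic lemma) to bridge the gap between a positive power and a negative power of $u$. Throughout, one works not with $u$ directly but with $u + d$ where $d = (r/R)^{2s}\tail(u_-;x_0,R,t_0-r^{2s},t_1)$, which is the natural object because in the region $B_R \times (t_0-r^{2s},t_1)$ the solution is nonnegative, but the nonlocal operator $\L$ still sees the negative part of $u$ living outside $B_R$; the quantity $d$ precisely controls that contribution. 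So the first step is to record the Caccioppoli-type energy inequality for $(u+d)^{\pm\beta}$ (for suitable powers $\beta$), where the tail term is absorbed into $d$; I would expect such an energy estimate to have been established earlier in the paper.

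The second step is the \emph{local boundedness / sup estimate}: iterate the energy inequality together with the parabolic Sobolev inequality (fractional Sobolev in space, combined with the $L^\infty_tL^2_x$ control from the energy estimate) to obtain
\[
\sup_{U^-(x_0,t_0,r/2)} (u+d) \le C \left( \frac{1}{|U^-(x_0,t_0,r)|}\int_{U^-(x_0,t_0,r)} (u+d)^{p}\,dx\,dt \right)^{1/p}
\]
for some small $p>0$; this is essentially Theorem \ref{thm_localbound}/\ref{thm_localboundfinal}, so I would cite it rather than reprove it. Symmetrically, one runs the iteration with negative exponents to get a lower bound for the infimum on $U^-(x_0,t_1,r/2)$ by a negative-power average: $\inf_{U^-(x_0,t_1,r/2)}(u+d) \ge C\big(\fint_{U^-(x_0,t_1,r)}(u+d)^{-p}\big)^{-1/p}$. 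Here the choice $t_1 = t_0 + 2r^{2s} - \alpha(r/2)^{2s}$ with $\alpha \in (1,2^{2s})$ is engineered so that $U^-(x_0,t_0,r)$ and $U^-(x_0,t_1,r)$ both sit inside the region of positivity and are separated in time in the way the weak Harnack argument requires — this is where the parabolic geometry is pinned down.

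The heart of the matter, and the main obstacle, is closing the gap between $\big(\fint (u+d)^p\big)^{1/p}$ over the \emph{earlier} cylinder and $\big(\fint (u+d)^{-p}\big)^{-1/p}$ over the \emph{later} cylinder: one must show
\[
\left(\fint_{U^-(x_0,t_0,r)}(u+d)^{p}\right)^{1/p} \le C \left(\fint_{U^-(x_0,t_1,r)}(u+d)^{-p}\right)^{-1/p},
\]
i.e. that $\log(u+d)$ has bounded oscillation in an averaged (BMO-type) sense. The standard route is: (i) apply the energy inequality to $\log(u+d)$ to show $w := -\log(u+d)$ is, after subtracting a time-dependent constant, an element of a parabolic BMO-type space, with the crucial feature that the nonlocal tail contributions are again controlled because $u+d \ge d > 0$ and the tail of $(u+d)_-$ vanishes; (ii) invoke a parabolic John–Nirenberg / Bombieri–Giusti lemma to convert this BMO bound into the desired comparison of the positive and negative exponential integrals. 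The nonlocal and time-dependent nature of the tail is exactly the new difficulty compared with the local parabolic case: one has to make sure that in every one of these steps (energy estimate for $\log$, the $L^\infty$–$L^p$ iteration, and the logarithmic lemma) the constant $d$ genuinely dominates all boundary-in-space error terms, so that the final constant $C$ depends only on $n,s,\Lambda$ and $\alpha$ and the estimate is homogeneous under adding $d$. Once the logarithmic lemma is in place, chaining the three estimates yields the claimed inequality.
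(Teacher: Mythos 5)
Your overall strategy is the same as the paper's: the proof is assembled from the local boundedness result (Theorem \ref{thm_localboundfinal}), the weak Harnack inequality (itself proved by Moser iteration for small positive and negative powers of $\tilde u=u+d$, a logarithmic level-set estimate, and the abstract Bombieri--Giusti/Moser lemma, Lemma \ref{lemma_SC}), chained through the time-shifted cylinders determined by the choice of $\alpha$. So the architecture of your argument is correct.

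There is, however, one genuine gap, and it concerns precisely the point the paper identifies as the main new parabolic difficulty. You set $d=(r/R)^{2s}\tail(u_-;x_0,R,t_0-r^{2s},t_1)$ with the \emph{time-averaged} tail and assert that this $d$ ``precisely controls'' the contribution of $u_-$ outside $B_R$. It does not: in every Caccioppoli inequality (Lemmas \ref{lemma_negativecacc}, \ref{lemma_positivecacc}) and in the logarithmic estimate (Lemma \ref{lemma_logest}), the exterior negative part enters through
\[
\sup_{t}\int_{\R^n\setminus B_R}\frac{u_-(y,t)}{|x-y|^{n+2s}}\,dy,
\]
i.e.\ a \emph{supremum} in time, because the test function is supported on a time interval and the worst time slice cannot be avoided. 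Hence the iteration only closes if $d$ dominates $(r/R)^{2s}\tail_\infty(u_-;\cdots)$, and running your argument as written yields the Harnack inequality with $\tail_\infty(u_-)$ on the right-hand side, which is a strictly weaker statement than the theorem claims. To recover the time-averaged tail one needs the additional step of estimating $\tail_\infty(u_-)$ by $\tail(u_-)$ over a slightly enlarged time interval; this is Lemma \ref{tail_inf_le_tail} and Corollary \ref{tail_inf_le_tail_minus}, whose proof uses the almost-eigenfunction $\Phi_r$ of Lemma \ref{lemma_eig} tested against the equation, and which is the only place where the global (super)solution property is actually needed. Your proposal should either incorporate this conversion explicitly or weaken the conclusion to the $\tail_\infty$ version. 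A second, more minor imprecision: Theorem \ref{thm_localboundfinal} bounds the supremum by the $L^1$ average of $u_+$ plus $\delta$ times a tail of $u_-$ (obtained via Lemma \ref{lemma_tail+-}, which trades the tail of $u_+$ for the local supremum and the tail of $u_-$); the intermediate $L^2\to L^1$ interpolation and the absorption of $\delta\sup u$ via Lemma \ref{lemma_HL} are needed there, so citing the theorem as a ready-made ``$L^\infty$--$L^p$ estimate for small $p$'' is fine in spirit but should track the $u_-$-tail dependence rather than a pure $(u+d)^p$ average.
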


\begin{theorem}[Weak Harnack inequality]\label{weakHarnack}
Suppose that $u$ is a supersolution to \eqref{maineq} such that 
\[
u\ge 0 \text{ in } B_R(x_0)\times(t_0-2r^{2s},t_0+2r^2), \quad r<R/2.
\]
Then 
\begin{align*}
&\fint_{B_r(x_0)\times(t_0-2r^{2s},t_0-r^{2s})}udxdt\le C\inf_{B_r(x_0)\times(t_0+r^{2s},t_0+2r^{2s})}u\\
&\qquad + C\left(\frac{r}{R}\right)^{2s}\tail_\infty(u_-;x_0,R,t_0-2r^{2s},t_0+2r^{2s}).\notag
\end{align*}
\end{theorem}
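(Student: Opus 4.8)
The plan is to follow the classical Moser–style scheme, adapted to the nonlocal parabolic setting via the parabolic tail. The proof splits into two halves, joined by a parabolic De Giorgi / Krylov–Safonov-type measure-to-pointwise lemma.

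First I would establish the \emph{lower} half: a reverse-type estimate controlling the infimum. Starting from the Caccioppoli inequality for supersolutions (testing \eqref{maineq} with a suitable power $u^{-\beta}\varphi^2$, or $\log u$, cut off away from the region where positivity is known and using the tail term $\tail_\infty(u_-;\cdot)$ to absorb the contribution of the far field where $u$ may be negative), one runs a Moser iteration on negative exponents over the forward cylinder $B_r\times(t_0+r^{2s},t_0+2r^{2s})$. This yields
\begin{equation*}
\left(\fint_{B_r\times(t_0+r^{2s},t_0+2r^{2s})} u^{p_0}\,dx\,dt\right)^{1/p_0}\le C\inf_{B_r\times(t_0+r^{2s},t_0+2r^{2s})} u + C\left(\frac{r}{R}\right)^{2s}\tail_\infty(u_-;x_0,R,t_0-2r^{2s},t_0+2r^{2s})
\end{equation*}
for some small $p_0=p_0(n,s,\Lambda)>0$.

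Next I would establish the \emph{upper} half: an $L^1$-to-$L^{p_0}$ bound connecting the two cylinders across time. The key ingredients are a logarithmic estimate (testing with $u^{-1}\varphi^2$) showing that $\log u$ has bounded oscillation in a suitable BMO/parabolic sense, followed by a Bombieri–Giusti iteration — or, alternatively, a lemma of parabolic Krylov–Safonov type asserting that if $|\{u>\lambda\}\cap Q^-|$ is a fixed fraction of $|Q^-|$ then $u$ is bounded below by $c\lambda$ on the forward cylinder $Q^+$, again modulo the tail term. Crucially the time lag between $(t_0-2r^{2s},t_0-r^{2s})$ and $(t_0+r^{2s},t_0+2r^{2s})$ is what makes the backward cylinder's sublevel information propagate forward; this is where the hypothesis $u\ge0$ on $B_R\times(t_0-2r^{2s},t_0+2r^{2s})$ and the structure of the cylinders is used. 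Combining, one gets
\begin{equation*}
\fint_{B_r\times(t_0-2r^{2s},t_0-r^{2s})} u\,dx\,dt\le C\left(\fint_{B_r\times(t_0+r^{2s},t_0+2r^{2s})} u^{p_0}\,dx\,dt\right)^{1/p_0}+C\left(\frac{r}{R}\right)^{2s}\tail_\infty(u_-;x_0,R,t_0-2r^{2s},t_0+2r^{2s}),
\end{equation*}
and chaining this with the lower half finishes the proof.

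The main obstacle I anticipate is the \emph{handling of the nonlocal tail uniformly in time} throughout both iterations. In the local case the Caccioppoli and logarithmic estimates are clean; here every such estimate picks up a term of the form $\int_{\R^n\setminus B_\rho}\frac{(u_-(y,t)+\text{const})}{|y-x_0|^{n+2s}}\,dy$, and to keep the constants from blowing up over the infinitely many Moser steps one must (i) carefully track how the tail radius $\rho$ shrinks from $R$ toward $r$ and rescale the tails by the corresponding power of the radius, using $r<R/2$ and the monotonicity/scaling of $\tail_\infty$, and (ii) ensure the supremum-in-time version $\tail_\infty$ — rather than the time-averaged $\tail$ — is what appears, because the logarithmic-estimate step and the measure-theoretic lemma both require pointwise-in-time control of the far field. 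A secondary technical point is that the positivity hypothesis only holds on the bounded cylinder $B_R\times(t_0-2r^{2s},t_0+2r^{2s})$, so all test functions must be supported (in space) inside $B_R$ and the "missing" positivity outside is precisely what the $\tail_\infty(u_-;\cdot)$ term accounts for; verifying that this bookkeeping closes up consistently at every scale is the crux.
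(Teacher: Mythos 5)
Your proposal is correct and follows essentially the same route as the paper: Caccioppoli estimates for negative and small positive powers of $\tilde u = u+d$ (with $d$ chosen to absorb the far-field contribution via $\tail_\infty(u_-;\cdot)$), Moser iteration on each of the two time-lagged cylinders, a logarithmic level-set estimate anchored at the common time $t_0$, and the abstract Bombieri--Giusti/Moser lemma (Lemma~\ref{lemma_SC}) to cross the exponent gap. The only cosmetic difference is organizational: the paper applies the abstract lemma separately to $w_1=e^{-a}\tilde u^{-1}$ on the forward cylinder and $w_2=e^{a}\tilde u$ on the backward one and then chains the two bounds, which is a repackaging of your ``lower half'' and ``upper half.''
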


The next two theorems concern local boundedness of subsolutions. 
\begin{theorem}\label{thm_localbound}
Suppose that $u$ is a subsolution to \eqref{maineq}.  
Then for any $x_0\in\R^n$, $r>0$, $t_0\in (r^{2s},T)$, $\theta\in(0,1)$ and any $\delta\in(0,1)$, there exist positive constants $C(\delta) = C(\delta,n,\Lambda,s)$ and $m = m(n,s)$, such that 
\begin{align*}
\sup_{U^-(x_0,t_0,\theta r)}u &\le  \frac{C(\delta)}{(1-\theta)^{m}}\fint_{U^-(x_0,t_0,r)}u_+dxdt\\
&+ \delta \tail(u_+;x_0,r,t_0-r^{2s},t_0).\notag
\end{align*}
\end{theorem}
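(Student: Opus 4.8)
The plan is to run a De Giorgi–type iteration adapted to the nonlocal parabolic setting, keeping careful track of the tail contribution throughout. First I would fix a sequence of shrinking cylinders interpolating between $U^-(x_0,t_0,r)$ and $U^-(x_0,t_0,\theta r)$: set $r_j = \theta r + (1-\theta)r\,2^{-j}$ and let $Q_j = U^-(x_0,t_0,r_j)$, together with smooth cutoff functions $\psi_j$ in space and $\eta_j$ in time that are $1$ on $Q_{j+1}$ and vanish near the parabolic boundary of $Q_j$, with $|\nabla\psi_j|\lesssim 2^j/((1-\theta)r)$ and $|\partial_t\eta_j|\lesssim 2^{2sj}/((1-\theta)r)^{2s}$. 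I would also fix increasing truncation levels $k_j = k(1-2^{-j})$ for a free parameter $k>0$ to be chosen, and study the truncated functions $w_j = (u - k_j)_+$.

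The core analytic input is a Caccioppoli / energy inequality for subsolutions: testing the equation against $w_j\psi_j^2\eta_j^2$ and using the symmetry of $K$ together with the ellipticity \eqref{ellipticity}, one obtains a bound of the form
\[
\sup_t \int (w_j\psi_j)^2\,\eta_j^2\,dx + \iint \frac{|(w_j\psi_j)(x,t)-(w_j\psi_j)(y,t)|^2}{|x-y|^{n+2s}}\,dx\,dy\,dt
\]
controlled by $\iint w_j^2(|\nabla\psi_j|^2 + \partial_t(\eta_j^2))\,dx\,dt$ plus a nonlocal cross term $\iint_{\supp\psi_j}\int_{\R^n} w_j(x,t)\,\psi_j(x,t)^2\, w_j(y,t)\,|x-y|^{-n-2s}\,dy\,dx\,dt$. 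In that last term I split the inner integral over $B_{r_j}$ (handled by the good part of the energy, since there $w_j \le w_0 = (u-0)_+\cdot$... actually one estimates it by the sup of $w_j$ and the measure of its support) and over $\R^n\setminus B_{r_j}$, where $|x-x_0|\sim|x-y|$ for $x$ in the support and this is exactly where the parabolic tail $\tail(u_+;x_0,r,t_0-r^{2s},t_0)$ enters. Using the elementary decomposition $w_j \le u_+$ and $w_j \le u_+ - k_j \le u_+$ plus the fact that $w_j$ is supported where $u_+ > k_j$, the tail term comes out multiplied by a factor that can be absorbed — and here the free constant $\delta$ appears: by choosing the cutoffs appropriately the tail appears with coefficient $\delta$ after the iteration collapses, while the geometric blowup in $j$ is absorbed into $C(\delta)(1-\theta)^{-m}$.

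Next, from the energy inequality I would apply the parabolic Sobolev embedding (interpolating the $L^\infty_t L^2_x$ bound with the $L^2_t \dot H^s_x$ bound) to get an $L^{2+\kappa}$-type gain on $Q_{j+1}$ for some $\kappa = \kappa(n,s)>0$, producing a recursive inequality of the form
\[
A_{j+1} \le C\,b^{j}\,(1-\theta)^{-m}\,k^{-\kappa}\,A_j^{1+\kappa}
+ \text{(tail terms)},
\]
where $A_j = \fint_{Q_j} w_j^{2}\,dx\,dt$ (suitably normalized) and $b>1$. The standard fast-geometric-convergence lemma then shows $A_j\to 0$, hence $u\le k$ on $U^-(x_0,t_0,\theta r)$, provided $A_0$ is small relative to a power of $k$; solving that smallness condition for $k$ yields $k \sim C(\delta)(1-\theta)^{-m}\fint_{U^-(x_0,t_0,r)}u_+ + \delta\,\tail(u_+;\dots)$, which is the claim. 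I expect the main obstacle to be the bookkeeping in the nonlocal tail term: unlike in the elliptic case, the tail is time-averaged, so one must be careful that truncating at level $k_j$ and restricting the time support interact correctly, and that the constant multiplying the tail can genuinely be made as small as a prescribed $\delta$ (rather than just "some constant") — this forces the cutoff and the iteration parameters to be chosen in a $\delta$-dependent way, which is the delicate point of the argument. A secondary technical nuisance is justifying the testing procedure rigorously (using Steklov averages in time, since $\partial_t u$ need not be a function), but that is routine.
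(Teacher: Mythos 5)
Your De Giorgi scheme (shrinking cylinders, truncation levels $k_j$, Caccioppoli inequality, parabolic Sobolev embedding, fast geometric convergence) is essentially the paper's Proposition \ref{prop_localbound}, and the passage from the $L^2$ average to the $L^1$ average via interpolation, Young's inequality and Lemma \ref{lemma_HL} is also the route the paper takes. The genuine gap is in your claim that the nonlocal cross term produces the time-averaged tail $\tail(u_+;x_0,r,t_0-r^{2s},t_0)$. The term in question is
\[
\int_{\Gamma_j}\int_{\R^n\setminus B_{r_j}}\int_{B_{r_j}} w_j(y,t)\,w_j(x,t)\,\psi_j^2(x)\,\eta_j^2(t)\,d\mu\,dt,
\]
and the two factors both depend on $t$, so they must be decoupled. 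If you take the supremum in $t$ of the outer integral $\int_{\R^n\setminus B_{r_j}}w_j(y,t)|x-y|^{-n-2s}dy$, you obtain the \emph{supremum} tail $\tail_\infty(u_+;\cdots)$ of \eqref{thesupTail}, not the time-averaged tail of \eqref{theTail} appearing in the statement; this is exactly what Lemma \ref{lemma_subsolcacc} and Proposition \ref{prop_localbound} give. If instead you try to keep the time average on the outer integral and take the supremum in $t$ of the inner factor $\int_{B_{r_j}}w_j\psi_j^2\,dx$, that supremum is, up to the factor $2^{j}/\tilde k$ from \eqref{tilde}, precisely the quantity the energy inequality is meant to control, and the attempted absorption fails because the factor $2^j$ grows along the iteration. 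So your argument as written proves the theorem with $\tail_\infty(u_+;\cdots)$ on the right-hand side, which is a strictly weaker statement.

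The missing ingredient is the conversion of $\tail_\infty(u_+;\cdots)$ into $\tail(u_+;\cdots)$ plus a local $L^1$ average, which is Lemma \ref{tail_inf_le_tail}. That lemma is not routine bookkeeping: it uses that $u_+$ is itself a global weak subsolution (Lemma \ref{lemma_subsol}) and tests the equation with $\Phi_r\eta$, where $\Phi_r$ is the weight of Lemma \ref{lemma_eig} satisfying $|L\Phi_r|\approx r^{-2s}\Phi_r$ (the almost-eigenfunction of \cite{BV}); the resulting differential inequality for $t\mapsto\int u_+\Phi_r\,dx$ bounds its supremum in time by its time average. This is the step that forces the hypothesis that $u$ solves the equation in all of $\R^n$, as the introduction emphasizes, and it is where the time dependence of the parabolic tail — the main new difficulty compared with the elliptic case — is actually handled. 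Your proposal never introduces this mechanism, and the remark that ``the tail appears with coefficient $\delta$ after the iteration collapses'' does not substitute for it: the $\delta$ in front of the tail comes simply from choosing $\tilde k\ge\delta\,\tail_\infty(u_+;\cdots)$ at the price of $\delta^{-(n+2s)/(2s)}$ in front of the local term, and is independent of the $\tail_\infty$-versus-$\tail$ issue.
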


\begin{theorem}\label{thm_localboundfinal}
Suppose that $u$ is a subsolution to \eqref{maineq} such that 
\[
u\ge 0\text{ in }B_{R}(x_0)\times(t_0-r^{2s},t_0),\quad r<R/2,
\]
where $t_0\in(r^{2s},T)$.  
Then for any $\theta\in(0,1)$ and any $\delta\in(0,1)$, there exist positive constants $C(\delta) = C(\delta,n,\Lambda,s)$ and $m= m(n,s)$ such that 
\begin{align*}
\sup_{U^-(x_0,t_0,\theta r)}u& \le \frac{C(\delta)}{(1-\theta)^{m}}\fint_{U^-(x_0,t_0,r)}u_+dxdt\\
& + \delta\left(\frac{r}{R}\right)^{2s}\tail(u_-;x_0,R,t_0- r^{2s},t_0). \notag
\end{align*}
\end{theorem}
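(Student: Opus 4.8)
The estimate is genuinely sharper than what Theorem \ref{thm_localbound} gives — whose right–hand side carries the full positive tail $\tail(u_+;x_0,r,t_0-r^{2s},t_0)$ — so it cannot simply be deduced from it; the plan is to rerun the De Giorgi–Moser scheme behind Theorem \ref{thm_localbound} while using the hypothesis $u\ge 0$ in $B_R(x_0)\times(t_0-r^{2s},t_0)$ at every step. Set $t_1=t_0-r^{2s}$. Two consequences of the positivity are the backbone of the argument. First, on that cylinder $u=u_+$ and $u_-\equiv0$, so for every level $k\ge0$ the tail of $(u-k)_+$ at any radius $\rho\le r$ splits, after writing $u_+=u+u_-$, into a part supported in $B_R(x_0)\setminus B_\rho(x_0)$ bounded by the tail of $u=u_+$, a part supported in $\R^n\setminus B_R(x_0)$ bounded by the tail of $u_+$, and — this is the point — the full $u_-$–contribution equals $(r/R)^{2s}\tail(u_-;x_0,R,t_1,t_0)$, because $u_-$ vanishes on $B_R(x_0)$. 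Second, since $r<R/2$, for $x\in B_r(x_0)$ and $y\notin B_R(x_0)$ the kernel $K(x,y,t)$ is comparable, uniformly in $x$, to $K(x_0,y,t)$; hence the long–range part of $\L u$ felt inside $B_r(x_0)$ acts, up to the $\tail(u_-;x_0,R,\cdot)$ term, as an almost spatially constant, lower–order perturbation of the equation, which therefore affects $\sup_{U^-(x_0,t_0,\theta r)}u$ and $\fint_{U^-(x_0,t_0,r)}u_+$ in the same way and drops out of a bound of the form ``sup $\lesssim$ average''.

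Concretely I would work with the truncations $w_j=(u-k_j)_+$, $k_j\uparrow k_\infty$, on cylinders $Q_j$ decreasing to $U^-(x_0,t_0,\theta r)$, taking $k_\infty$ to be the right–hand side of the asserted inequality. The Caccioppoli/energy inequality for a subsolution, tested with $w_j\varphi^2\eta^2$ for a spatial cutoff $\varphi$ and a time cutoff $\eta$, yields besides the usual local terms a long–range term of the shape $\bigl(\int_{Q_{j-1}}w_j\bigr)\cdot\mathcal T_j$, where $\mathcal T_j$ is a parabolic tail of $w_j$ at the scale of $Q_j$. Splitting $\mathcal T_j$ at $\partial B_R(x_0)$ as above, the part from the fixed annulus $B_R(x_0)\setminus B_{\rho_j}(x_0)$ is treated by a dyadic decomposition of that annulus, producing a convergent series $\sum_k 2^{-2ks}$ of supremum quantities at larger radii that are re–estimated by Theorem \ref{thm_localbound} and reabsorbed using the $(1-\theta)^{-m}$ dependence of its constant; the part from $\R^n\setminus B_R(x_0)$ contributes, on the one hand, the term $\delta(r/R)^{2s}\tail(u_-;x_0,R,t_1,t_0)$ appearing in the statement and, on the other hand, a nonnegative leftover which, being multiplied by $\int_{Q_{j-1}}w_j$ and hence by the super-level-set measure $|\{u>k_j\}\cap Q_{j-1}|\to0$, is negligible along the iteration. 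With $k_\infty$ chosen as prescribed the resulting recursion for $Y_j=\fint_{Q_j}w_j^2$ is fast–geometric, so $Y_j\to0$ and $u\le k_\infty$ a.e.\ on $U^-(x_0,t_0,\theta r)$.

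The main obstacle is the time dependence of the parabolic tails, already singled out in the introduction: $\mathcal T_j$ is measured over the interval $(t_0-\rho_j^{2s},t_0)$, which shrinks with $\rho_j$, so the cylinders occurring at different steps of the iteration — and those occurring at the different dyadic radii used to control the annulus $B_R(x_0)\setminus B_{\rho_j}(x_0)$ — are not nested in the way one would like; the energy estimates must therefore be paired with a careful choice of time cutoffs and of the time–intervals of the $Q_j$ so that, in the end, everything is referred back to the single cylinder $U^-(x_0,t_0,r)$ together with the radius-$R$ tail of $u_-$. Checking that the positive far–field mass really does cancel between the supremum and the average — for which the comparability $K(x,\cdot,t)\simeq K(x_0,\cdot,t)$ on $\R^n\setminus B_R(x_0)$ is essential — is the other place where care is needed.
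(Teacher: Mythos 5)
Your proposal has a genuine gap at the one point where this theorem differs from Theorem \ref{thm_localbound}: the control of the \emph{positive} far‑field mass of $u$ outside $B_R(x_0)$. Splitting $u_+=u+u_-$ on $\R^n\setminus B_R(x_0)$ isolates the admissible term $(r/R)^{2s}\tail(u_-;x_0,R,t_0-r^{2s},t_0)$, but leaves the tail of $u$ itself over $\R^n\setminus B_R(x_0)$, which the hypotheses do not control; this is exactly the quantity that must not appear on the right‑hand side. Neither of the two mechanisms you offer for it closes. (a) The claim that a spatially almost constant far‑field source ``affects $\sup u$ and $\fint u_+$ in the same way and drops out'' is false: a source $f(t)$ constant in $x$ contributes $\int f(t)\,dt$ additively to the supremum, and if $f$ is concentrated near $t=t_0$ this time integral (which is comparable to the positive tail outside $B_R$) can vastly exceed the space–time average of the resulting solution, so nothing cancels. (b) In the De Giorgi recursion the term $\mathcal T_j\int_{Q_{j-1}}w_j$ cannot be dismissed because its second factor tends to zero: to obtain a closed recursion $Y_{j+1}\le c_0b^jY_j^{1+\beta}$ one must absorb $\mathcal T_j\int w_j$ into $\tilde k^{\,2}Y_j^{1+\beta}$, and (as in the paper's Proposition \ref{prop_localbound}, estimate of $J_2$ via \eqref{tilde}) this forces $\tilde k\gtrsim\delta\,\tail_\infty(u_+;\dots)$ — i.e.\ the positive tail reappears in the final level $k_\infty$, which is precisely the statement of Theorem \ref{thm_localbound} rather than of Theorem \ref{thm_localboundfinal}.

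The missing ingredient is the paper's Lemma \ref{lemma_tail+-}: by testing the equation itself with $(u-2k)\psi^2$, where $k=\sup_{B_r\times(t_1,t_2)}u$, one proves
$\tail(u_+;x_0,r,t_1,t_2)\le C\sup_{B_r\times(t_1,t_2)}u+C(r/R)^{2s}\tail(u_-;x_0,R,t_1,t_2)$;
this converts the inadmissible positive tail into the local supremum (which is then reabsorbed by choosing $\delta=(1-\lambda)^mM^{-1}$ and invoking Lemma \ref{lemma_HL}) plus the admissible negative tail. Note also that this lemma only applies to the non‑supremum tail, which is why Lemma \ref{tail_inf_le_tail} must first be used to pass from $\tail_\infty(u_+;\dots)$ to $\tail(u_+;\dots)$. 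In particular, contrary to your opening remark, the paper \emph{does} deduce Theorem \ref{thm_localboundfinal} from the intermediate estimate \eqref{kuk} (essentially Theorem \ref{thm_localbound}) combined with Lemma \ref{lemma_tail+-} and an absorption argument; what cannot be dispensed with is the use of the equation (not merely the energy inequality for subsolutions) in Lemma \ref{lemma_tail+-}, and your proposal contains no substitute for it.
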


The tail of the negative part of the solution enters in a crucial way. If $u$ is assumed to be nonnegative throughout $\R^n$ for all relevant times, the results are analogous to the corresponding theorems for local equations. For instance, Theorem \ref{thm_localboundfinal} asserts in this situation that the solution is locally bounded in terms of its local $L^1$-norm only. In Theorem \ref{weakHarnack} the supremum version of the tail, $\tail_\infty$, is used rather than $\tail$. We will see later in Lemma \ref{tail_inf_le_tail} and Corollary \ref{tail_inf_le_tail_minus} that $\tail_\infty(v;x_0,t_0,t_2)$ can be estimated in terms of $\tail(v;x_0,t_1,t_2)$ if $t_1<t_0$ and $v$ is either the positive part of a subsolution or the negative part of a supersolution. The technique that we use for this estimate requires us to work with global solutions. In fact, this is the only reason for us to consider global solutions. Under the hypothesis that Lemma \ref{tail_inf_le_tail} and Corollary \ref{tail_inf_le_tail_minus} hold, Theorems \ref{Harnack}-\ref{thm_localboundfinal} hold for functions that are solutions only locally. 

For solutions to elliptic equations \ $\L u=0$ in $B_r$, that are nonnegative in $B_R\supset B_r$, the following Harnack inequality holds:  
\begin{equation}\label{ellipticHarnack}
\sup_{B_{r/2}}u\le C\left(\inf_{B_{r/2}}u + \left(\frac{r}{R}\right)^{2s}\tail(u_-;x_0,R)\right), 
\end{equation}
where 
\[
\tail(u_-;x_0,R) = R^{2s}\int_{\R^n\setminus B_R(x_0)}\frac{u_-dx}{|x-x_0|^{n+2s}}. 
\]
The Harnack inequality \eqref{ellipticHarnack} is due to Kassmann, who proved it for the fractional laplacian, see \cite{kassmann}, \cite{kassmann2}. In \cite{kassmann2} a counterexample is provided that shows that the tail-contribution in \eqref{ellipticHarnack} is actually necessary. The Harnack inequality \eqref{ellipticHarnack} was later proven to hold for solutions to more general fractional operators of $p$-Laplace type, with a suitably adjusted tail-term if $p\neq 2$, see \cite{DKPpmin} and \cite{DKPharnack} by Di Castro, Kuusi and Palatucci. In the papers \cite{DKPpmin}, \cite{DKPharnack}, which have to be considered the state of the art of the elliptic theory, the authors additionally prove local boundedness and H\"older continuity of solutions.     

In the parabolic context, Harnack's inequality has, to the author's best knowledge only been proved for solutions that are globally positive, using representation formulas in terms of the heat kernel. 
In the probabilistic setting, Harnack inequalitys have been established using the connection between stochastic processes and equations similar to \eqref{maineq}. See for example \cite{BBK} and the references therein. In \cite{BSV}, Bonforte, Sire and Vazquez develop an optimal existence and uniqueness theory for the Cauchy problem for the fractional heat equation  posed in $\R^n$. For globally positive solutions to the fractional heat equation, they prove a Harnack inequality in which the usual timelag present in parabolic Harnack inequalities does not occur. This is due to the fact that the fractional heat kernel is not of Gaussian form. Thus the time lag present in Theorem \ref{Harnack} and \ref{weakHarnack} does not seem to be necessary.  
  
Felsinger and Kassmann \cite{FelKass} prove a weak Harnack inequality and H\"older continuity for weak solutions to \eqref{maineq} that are globally positive. They work with a class of kernels satisfying slightly weaker growth conditions than \eqref{ellipticity}. Due to the assumption of global positivity, the nonlocal term involving the negative part of the solution (the tail term), that normally occur in such estimates, is not present. In \cite{SchwKass}, Schwab and Kassmann prove results similar to those in \cite{FelKass}, but with $a(t,x,y)d\mu(x,y)$ in place of $K(t,x,y)dxdy$, merely assuming that $\mu$ is a measure, not necessarily absolutely continuous w.r.t.\ Lebesgue measure, that satisfies certain growth conditions. It should also be mentioned that the conditions imposed on the kernels/measures in \cite{FelKass} and \cite{SchwKass} are in general not sufficient to prove a Harnack inequality. This is due to a result by Bogdan and Sztonyk \cite{BogSzton} that prove sharp conditions on the kernel for a Harnack inequaity to hold (in the elliptic setting). 

In \cite{MS} by the author, local boundedness of solutions to degenerate nonlocal parabolic equations of $p-$Laplace type is proved. The proof is valid for $p>2$ and not $p=2$ that is considered in this paper. The bounds established in \cite{MS} depend on the supremum-version of the tail \eqref{thesupTail}. In that sense they are weaker than those established in the present paper.  Otherwise there seem to exist no previous theory of local boundedness, i.e.\ results in the spirit of Theorem \ref{thm_localbound} - \ref{thm_localboundfinal}, for parabolic nonlocal equations. 

In \cite{CCV}, Caffarelli, Chan and Vasseur study parabolic nonlocal, nonlinear equations of quadratic growth in all space. They prove that solutions are bounded and H\"older continuous as soon as the initial data is in $L^2$. Their results apply to the situation of the present paper. Thus, if we specify initial data $u_0\in L^2(\R^n)$ at time $t=0$ for the equation \eqref{maineq}, its solution will be H\"older continuous. 

\subsection{Outline of the paper}

In section \ref{prel} we cast $\L$ as an operator in divergence form, and introduce weak sub- and supersolutions to equation \eqref{maineq}, as well as some of their properties. We also establish Caccippoli inequalities that are crucial for the proofs of Theorems \ref{Harnack} -\ref{thm_localboundfinal}. Finally we provide estimates for the parabolic tails introduced in Definition \ref{def_tail}. An indispensable tool here is the fact that the weight function appearing in the definition of the tails behaves almost like an eigenfunction for the operator $\L$. This result first appeared in \cite{BV} and was used in \cite{BSV}. 
Section \ref{sec_weakHarnack} is devoted to the proof of Theorem \ref{weakHarnack}, the weak Harnack inequality. The structure of the proof follows Mosers original ideas. Theorem \ref{weakHarnack} was proved under the additional hypothesis that $u\ge 0$ in $\R^n\times(t_0-r^{2s},t_0+r^{2s})$ in \cite{FelKass}. In section 3 we prove Theorem \ref{thm_localbound} and \ref{thm_localboundfinal}. The proof uses De Giorgi's approach together with the estimates for the estimates for parabolic tails proved in section 2. Finally, in section 4 we obtain Harnack's inequality in a standard way using the previous results.

\section{Preliminaries and tools}\label{prel}

For a domain $D\subseteq\R^n$, 
the Sobolev space $H^s(D)$ consists of all functions $f\in L^2(D)$ such that the semi-norm 
\[
[f]_{H^s(D)} = \left(\int_{D} \int_{D}\frac{|f(x)-f(y)|^2dxdy}{|x-y|^{n+2s}}\right)^{\frac{1}{2}}
\]
is finite. The norm of $f\in H^{s}(D)$ is given by 
\[
\|f\|_{H^{s}(D)} = [f]_{H^s(D)} + \|f\|_{L^2(D)}. 
\]
The dual space of $H^s(D)$ is denoted $H^{-s}(D)$. We write $\langle\cdot,\cdot\rangle$ for the duality pairing between $H^s(D)$ and $H^{-s}(D)$. 
The parabolic Sobolev space $L^2(0,T;H^{s}(D))$ is the set of measurable functions on $(0,T)\times D$ such that the norm 
\[
\|f\|_{L^2(0,T;H^{s}(D))} = \left(\int_0^T\|f(\cdot,t)\|_{H^s(D)}^2dt\right)^{\frac12},
\]
is finite. Its dual space, $L^2(0,T;H^{-s}(D))$, is defined analogously. 
\subsection{Weak solutions}
We treat $\L$ as an operator in divergence form. Let 
\[
\E(u,v,t) = \int_{\R^n}\int_{\R^n}(u(x,t)-u(y,t))(v(x,t)-v(y,t))K(x,y,t)dxdy.
\]
Then if $u$ and $v$ are sufficiently smooth, 
\[
\int_{\R^n}\L u(x,t)v(x,t)dx = 2\E(u,v,t). 
\]
Thus, in order for the definition of weak solution given below to be consistent with \eqref{maineq}, we need to use $\frac12K$ rather than $K$ in the definition of $\E$. 
\begin{definition}
We say that $u$ is a weak subsolution (supersolution) to \eqref{maineq} if 
\begin{align}
&\int_0^T\langle\partial_tu,\phi\rangle dt + \int_0^T\E(u,\phi,t)dt \le 0\;(\ge 0),
\end{align}
for all nonnegative $\phi\in \H = \{v\in L^2(0,T;H^{s}(\R^n)):\partial_tv\in L^2(0,T;H^{-s}(\R^n))\}$ such that $\phi(\cdot,0) = \phi(\cdot,T) = 0$. 
Such a function will be referred to as a test function. A solution to \eqref{maineq} is a function that is both a subsolution and a supersolution. 
\end{definition}
When $\phi$ has a time derivative in the classical sense, a weak subsolution (supersolution) to \eqref{maineq} satisfies 
\begin{align}\label{weak2}
&-\int_0^T\int_{\R^n} u\partial_t\phi dx dt + \int_0^T\E(u,\phi,t)dt \le 0\;(\ge 0).
\end{align}
If we additionally specify initial data $u(x,0) = u_0(x)\in L^2(\R^n)$, a unique weak solution can be constructed using Galerkin's method. We also refer to \cite{BSV} for a much more advanced theory of existence and uniqueness for the fractional heat equation.  

\begin{remark}\label{rmk1}
We here briefly explain how to regularize test functions in a way that enables us to work with solutions as though they were bounded and smooth in $t$. 
In order not to overburden our proofs, we will not do this explicitly later on but refer to this remark instead. 
If $f\in H^s(D)$, then $f_+(x) = \max\{f(x),0\}$ belongs to $H^s(D)$ and 
\[
[f_+]_{H^s(D)}\le [f]_{H^s(D)}.
\]
This is simply due to the fact that for any $a,b\in \R$, $|a_+-b_+|\le |a-b|$. Since $\min\{a,M\} = M-(M-a)_+$, we see that a truncation does not increase the semi-norm in $H^s(D)$: 
\begin{equation}\label{max}
[\min\{f,M\}]_{H^s(D)}\le [f]_{H^s(D)},\quad\text{for any }M\in \R.
\end{equation}
Similarly, we have 
\begin{equation}\label{min}
[\max\{f,M\}]_{H^s(D)}\le [f]_{H^s(D)}, \quad\text{for any }M\in \R.
\end{equation}
Let $\zeta\in C_c^\infty(-1/2,1/2)$ be a non negative function such that 
$\zeta(t) = \zeta(-t)$ and $$\int_{-1/2}^{1/2}\zeta dt = 1.$$ For $h>0$, set $\zeta_h(t) = \zeta(t/h)h^{-1}$. If, $a<b$, $f\in L^1(a,b)$, $(\alpha,\beta)\subset (a+h/2,b-h/2)$ and $t\in(\alpha,\beta)$, let 
\[
f_h(t) = \int_a^b f(s)\zeta_h(t-s)ds. 
\]
Then $f_h$ is smooth on $(\alpha,\beta)$ and $\lim_{h\to0}f_h(t) = f(t)$ for a.e.\ $t\in (a,b)$. If $g(t)\in L^1(a,b)$, it is not hard to check, using the symmetry of $\zeta$, that 
\begin{equation}\label{ipp}
\int_\alpha^\beta f(t)\partial_tg_h(t)dt = -\int_\alpha^\beta \partial_tf_h(t)g(t)dt. 
\end{equation}
When deriving estimates from \eqref{weak2}, it may be assumed that $u(x,t)$ is bounded and differentiable in $t$ thanks to \eqref{max}, \eqref{min} and \eqref{ipp}. We would typically like to use a test function of the form $\phi(x,t) = u^p\psi(x)\eta(t)$ in \eqref{weak2} which is not in general possible. However, $\phi = ((\min\{u,M\})_h^p\psi\eta)_h$ is a valid test function for $p\ge 1$. If $p<1$ we need to replace $\min$ by $\max$. If $\eta$ has compact support in $(0,T)$, then by \eqref{ipp}, 
\begin{equation}
-\int_0^T\int_{\R^n}u\partial_t\phi dxdt = \int_0^T\int_{\R^n}\partial_t(\min\{u,M\})_h(\min\{u,M\})_h^p\psi\eta dxdt. 
\end{equation}
Thus we may work qualitatively with solutions as though they were bounded (above or below) and smooth in $t$ (with parameters $M,h$) as long as our estimates do not depend upon $M$ or $h$ and send $h\to0$ and $M\to\infty$ in the end. 

\end{remark}

\begin{remark}\label{rmk2}
If $u$ is a weak subsolution (supersolution) to \eqref{maineq} and $[t_1,t_2]\subset (0,T)$, then 
\begin{align}\label{subintervalipp}
&\int_{\R^n} u(x,t_2)\phi(x,t_2) dx - \int_{\R^n} u(x,t_1)\phi(x,t_1) dx\\
&-\int_{t_1}^{t_2}\int_{\R^n}u\partial_t\phi dxdt+ \int_{t_1}^{t_2}\E(u,\phi,t)dt \le 0\;(\ge 0), \notag
\end{align}
for all non negative smooth test functions $\phi$. To see this, let $\eta_j$ be a sequence of smooth, non negative functions on $\R$, with compact support in $(0,T)$, such that $\lim_{j\to\infty}\eta_j(t)=\chi_{(t_1,t_2)}$ a.e.\ Testing with $\phi\eta_j$ then and integrating by parts gives 
\begin{align}\label{subinterval}
&\int_{0}^{T}\int_{\R^n}\phi\eta_j\partial_tudxdt + \int_{0}^{T}\int_{\R^n}\int_{\R^n}\E(u,\phi\eta_j,t)dt\le 0\;(\ge 0). 
\end{align}
We recall that it may be assumed that $\partial_tu$ exists by Remark \ref{rmk1}. By Lebesgue's dominated convergence theorem, taking $j\to\infty$ 
in \eqref{subinterval} results in 
\begin{align}\label{dtu}
&\int_{t_1}^{t_2}\int_{\R^n}\phi\partial_tudxdt + \int_{t_1}^{t_2}\int_{\R^n}\int_{\R^n}\E(u,\phi,t)dt \le 0\;(\ge 0). 
\end{align}
Then \eqref{subintervalipp} follows after integrating by parts. In \eqref{subintervalipp} and \eqref{dtu}, $\phi$ does not need to have compact support in $(t_1,t_2)$. 

\end{remark}

The next lemma is a standard fact for local equations, but we have found no proof in the literature for nonlocal equations. 

\begin{lemma}\label{lemma_subsol}
If $u$ is a weak subsolution to \eqref{maineq}, then $u_+$ is a weak subsolution to \eqref{maineq}.  
\end{lemma}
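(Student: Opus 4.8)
The plan is to prove that $u_+$ is a subsolution by testing the weak formulation for $u$ against a cleverly chosen nonnegative test function and passing to a limit. The natural candidate for the test function, given a nonnegative $\phi\in\H$ with compact support in time, is $\phi_\e = \phi\,H_\e(u)$ where $H_\e$ is a smooth, bounded, nondecreasing approximation of the Heaviside function $\chi_{\{u>0\}}$ — for instance $H_\e(t) = \min\{1,\max\{0,t/\e\}\}$ or a smoothed version thereof. Heuristically, $\phi H_\e(u)$ is nonnegative and, as $\e\to 0$, it concentrates the contribution of the equation on the set $\{u>0\}$, which is exactly where $u=u_+$. The technical preliminaries of Remark \ref{rmk1} let us pretend $u$ is bounded and differentiable in $t$ (via the $M,h$ regularization), so all the manipulations below are legitimate provided the final estimates are uniform in those parameters.

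First I would handle the parabolic (time-derivative) term. With the regularization of Remark \ref{rmk1} in force, $\partial_t u$ makes sense and, writing $G_\e(t) = \int_0^t H_\e(\sigma)\,d\sigma$ so that $G_\e'(u)\partial_t u = H_\e(u)\partial_t u$, one has
\[
\int_0^T\int_{\R^n}\phi\, H_\e(u)\,\partial_t u\,dx\,dt = \int_0^T\int_{\R^n}\phi\,\partial_t\big(G_\e(u)\big)\,dx\,dt = -\int_0^T\int_{\R^n}G_\e(u)\,\partial_t\phi\,dx\,dt.
\]
Since $0\le G_\e(t)\le t_+$ and $G_\e(t)\uparrow t_+$ as $\e\to0$ (with $G_\e(t)=0$ for $t\le0$), dominated convergence gives that this converges to $-\int_0^T\int_{\R^n}u_+\,\partial_t\phi\,dx\,dt$, which is the correct parabolic term for $u_+$ in the form \eqref{weak2}. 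The term $\int_0^T\langle\partial_t u,\phi_\e\rangle\,dt$ must be interpreted through this identity after the regularization; this is a routine bookkeeping point.

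The heart of the matter is the nonlocal bilinear term $\int_0^T\E(u,\phi H_\e(u),t)\,dt$, and this is where I expect the main obstacle. Expanding the integrand pointwise in $(x,y)$,
\[
(u(x)-u(y))\big(\phi(x)H_\e(u(x)) - \phi(y)H_\e(u(y))\big),
\]
and splitting $\phi(x)H_\e(u(x))-\phi(y)H_\e(u(y)) = H_\e(u(x))(\phi(x)-\phi(y)) + \phi(y)(H_\e(u(x))-H_\e(u(y)))$, one sees two pieces. The second piece, $(u(x)-u(y))(H_\e(u(x))-H_\e(u(y)))\phi(y)$, is nonnegative because $H_\e$ is nondecreasing and $\phi\ge0$; hence it only helps the inequality and can be discarded (dropped to the correct side of $\le 0$) in the limit — here one should be slightly careful and note that in the limit this term tends to something with a definite sign, so that the limiting inequality is preserved. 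For the first piece, one checks the pointwise bound
\[
(u(x)-u(y))H_\e(u(x))(\phi(x)-\phi(y)) \;\longrightarrow\; (u_+(x)-u_+(y))(\phi(x)-\phi(y))
\]
as $\e\to 0$ for a.e.\ $(x,y)$ — this is the elementary fact that $\chi_{\{a>0\}}(a-b)$ and $\chi_{\{b>0\}}(a-b)$ both agree with $a_+-b_+$ in the relevant sense after symmetrization, combined with $\phi(x)-\phi(y)$ not changing sign issues — together with a domination of the integrand by $C|u(x)-u(y)|\,|\phi(x)-\phi(y)|\,|x-y|^{-n-2s}$, which is integrable on $\R^n\times\R^n\times(0,T)$ since $u,\phi\in L^2(0,T;H^s(\R^n))$ and one uses Cauchy–Schwarz together with the ellipticity bound \eqref{ellipticity}. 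Dominated convergence then yields $\int_0^T\E(u,\phi H_\e(u),t)\,dt \to \int_0^T\E(u_+,\phi,t)\,dt$ up to the favourable-sign term, and combining with the parabolic term gives $-\int_0^T\int u_+\partial_t\phi + \int_0^T\E(u_+,\phi,t)\,dt \le 0$, i.e.\ $u_+$ is a subsolution. The one delicate point deserving care is the interplay of the two sign issues in the nonlocal term: I would make the symmetrization explicit (use the symmetry $K(x,y,t)=K(y,x,t)$ to symmetrize both the "good" and the "bad" pieces) so that the "bad" piece is manifestly nonnegative before taking limits, and only then invoke Fatou/dominated convergence on each piece separately. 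Finally, one removes the qualitative regularization by sending $h\to0$ and $M\to\infty$, noting all constants were independent of $h,M$, to conclude.
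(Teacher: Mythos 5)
Your test function $\phi_\e = \phi\,H_\e(u)$ is essentially the same as the paper's $\phi\,\zeta_j'$ where $\zeta_j' = z_j'(u)$ and $z_j$ is a smooth convex approximation of $\tau\mapsto\tau_+$ (so $z_j'$ is a nondecreasing approximation of the Heaviside function, just as your $H_\e$ is). The treatment of the time term is also the same up to an integration by parts. So the overall strategy matches the paper's. However, your handling of the nonlocal term has a genuine gap.

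The claimed pointwise limit
\[
(u(x)-u(y))\,H_\e(u(x))\,(\phi(x)-\phi(y)) \;\longrightarrow\; (u_+(x)-u_+(y))\,(\phi(x)-\phi(y))
\]
is false. The actual limit is $(u(x)-u(y))\,\chi_{\{u(x)>0\}}\,(\phi(x)-\phi(y))$, and in the mixed-sign case these differ: if $u(x)>0$, $u(y)<0$ and $\phi(x)<\phi(y)$, the actual limit equals $(u(x)-u(y))(\phi(x)-\phi(y))$ while the target equals $u(x)(\phi(x)-\phi(y))$, and the difference $-u(y)(\phi(x)-\phi(y))$ is strictly negative — i.e.\ the first piece's limit falls \emph{below} the target. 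Symmetrization does not repair this: the symmetrized first piece converges to $\tfrac12(u(x)-u(y))(\phi(x)-\phi(y))\bigl(\chi_{\{u(x)>0\}}+\chi_{\{u(y)>0\}}\bigr)$, which again can fall below $(u_+(x)-u_+(y))(\phi(x)-\phi(y))$ (take $u(x)>|u(y)|>0>u(y)$ and $\phi(x)>\phi(y)$). So you cannot discard the second piece and still reach the desired conclusion from the first piece alone, even though the second piece is nonnegative.

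The repair is to keep the pieces together. The full integrand $(u(x)-u(y))\bigl(\phi(x)H_\e(u(x))-\phi(y)H_\e(u(y))\bigr)K$ is bounded below by the integrable function $-|u(x)-u(y)|\,|\phi(x)-\phi(y)|\,K$ (since the second piece is $\ge0$ and the first is dominated), so Fatou applies to the full integrand. Its pointwise limit is $(u(x)-u(y))\bigl(\chi_{\{u(x)>0\}}\phi(x)-\chi_{\{u(y)>0\}}\phi(y)\bigr)$, and a short case check shows this is $\ge (u_+(x)-u_+(y))(\phi(x)-\phi(y))$ for every $x,y$ (the only nontrivial case is $u(x)>0\ge u(y)$, where the difference is $u_-(y)\phi(x)+u(x)\phi(y)\ge0$). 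This is exactly the shape of the paper's argument: the paper's lower bound $(u_{j,+}(x)-u_{j,+}(y))\,\zeta_j'(x)\,(\phi(x)-\phi(y))$ (proved assuming $u(x)>u(y)$, then symmetrized) carefully places the cutoff $\zeta_j'$ at the argmax of $u$, and as a result its pointwise limit is exactly $(u_+(x)-u_+(y))(\phi(x)-\phi(y))$, whereas your split always evaluates the cutoff at $x$. So the approach is sound and parallels the paper's, but the step where you identify the limit of the first piece is wrong as written and needs to be replaced by the full-integrand Fatou argument above.
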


\begin{proof}

Let $z_j(\tau)$ be a smooth, convex approximation of $\tau_+$ such that \[z_j(\tau)=0\text{ if }\tau\le -1/j,\;  z_j(\tau),\;z_j'(\tau)>0\text{ if }\tau>-1/j\text{ and }|z_j'|\le C,\; |z_j''|\le C(j).
\] 
Let $\zeta_j(x,t) = z_j(u(x,t))$ and let $\zeta_j'(x,t) = z_j'(u(x,t))$. We also set 
\begin{equation}
u_{j,+}(x,t) = \max\{u(x,t),-1/j\} = 
\left\{\begin{array}{l}
u(x,t)\text{ if }\zeta_j'(x,t)>0,\\
-1/j\text{ if }\zeta_j'(x,t)=0.
\end{array}\right.
\end{equation}
Let $\phi$ be a nonnegative, bounded test function. By appealing to Remark \ref{rmk1}, it is easy to verify that $\phi\zeta'_j$ is an admissible test function. 
Using $\zeta_j'\phi$ as a test function in \eqref{weak2} we obtain
\begin{align*}
&\int_{0}^{T}\int_{\Omega}\partial_tu\zeta_j'\phi dxdt + \int_{0}^{T}\int_{\R^n}\int_{\R^n}(u(x,t)-u(y,t))(\phi\zeta_j'(x,t)-\phi\zeta_j'(y,t))d\mu dt\\
& = I_{1,j}+I_{2,j} \le 0. 
\end{align*}
We may write $I_{1,j}$ as 
\begin{equation}\label{timeuplus}
I_{1,j} = \int_{0}^{T}\int_{\Omega}\phi\partial_tz_j(u)dxdt\to I_{1} = \int_{0}^{T}\int_{\Omega}\phi\partial_t u_+dxdt, \quad\text{as }j\to\infty.
\end{equation}
We next estimate the integrand of $I_{2,j}$ under the assumption that $u(x,t)>u(y,t)$. 
If $\zeta_j'(x,t)=0$, then $\zeta_j'(y,t)=0$ since $\zeta'$ is monotone nondecreasing. Hence the integrand of $I_{2,j}$ vanishes for such $(x,y,t)$. 
If $\zeta_j'(y,t)>0$, then 
\begin{align*}
&(u(x,t)-u(y,t))(\zeta_j'(x,t)\phi(x,t)-\zeta_j'(y,t)\phi(y,t))\\
& =(u_{j,+}(x,t)-u_{j,+}(y,t))(\zeta_j'(x,t)\phi(x,t)-\zeta_j'(y,t)\phi(y,t))\\
&\ge(u_{j,+}(x,t)-u_{j,+}(y,t))\zeta_j'(x,t)(\phi(x,t)-\phi(y,t)).
\end{align*}
If $\zeta_j'(y,t)=0$ and $\zeta_j'(x,t)>0$, then 
\begin{align*}
& (u(x,t)-u(y,t))(\zeta_j'(x,t)\phi(x,t)-\zeta_j'(y,t)\phi(y,t))\\
& = (u(x,t)-u(y,t))\zeta_j'(x,t)\phi(x,t)\\
&\ge (u_{j,+}(x,t)-u_{j,+}(y,t))\zeta_j'(x,t)\phi(x,t)\\
&\ge (u_{j,+}(x,t)-u_{j,+}(y,t))\zeta_j'(x,t)(\phi(x,t)-\phi(y,t)). 
\end{align*}
We have thus shown that if $u(x,t)>u(y,t)$, 
\begin{align}
&(u(x,t)-u(y,t))(\zeta_j'(x,t)\phi(x,t)-\zeta_j'(y,t)\phi(y,t))\label{phi(x)}\\
&\ge (u_{j,+}(x,t)-u_{j,+}(y,t))\zeta_j'(x,t)(\phi(x,t)-\phi(y,t)).\notag
\end{align}
If $u(x,t)<u(y,t)$, we obtain the same estimate by interchanging the roles of $x$ and $y$.
By dominated convergence, we obtain from \eqref{phi(x)}
\begin{align*}
&\liminf_{j\to\infty}I_{2,j}\\
&\ge \int_{0}^{T}\int_{\R^n}\int_{\R^n}K(x,y,t)(u_{+}(x,t)-u_{+}(y,t))(\phi(x,t)-\phi(y,t))dxdydt\\
& = \int_{0}^{T}\E(u_+,\phi,t)dt.
\end{align*}
In combination with \eqref{timeuplus}, this gives 
\[
\int_{0}^{0}\int_{\Omega}v\partial_t u_+dxdt + \int_0^T\E(u_+,\phi,t)dt\le 0,
\]
for all bounded, nonnegative test functions $\phi$, and by a standard approximation argument, all nonnegative test functions $\phi$.  

\end{proof}

\subsection{Sobolev inequalities}

For the basic properties of fractional Sobolev spaces, we refer to \cite{Hitch}. Lemma \ref{sobolevembedding} below follows from Theorem 6.7. in \cite{Hitch}. The correct dependence upon $r$ is obtained by rescaling.  
\begin{theorem}\label{sobolevembedding}
Suppose $f\in H^s(B_r)$ for $s\in(0,1)$, $n\ge 2$ and let $\kappa^* = \frac{n}{n-2s}$. Then there exists a constant $C=C(n,s)$ such that 
\begin{equation*}
\left(\fint_{B_r}|f|^{2\kappa^*}dx\right)^{1/\kappa^*}\le Cr^{2s-n}\int_{B_r}\int_{B_r}\frac{|f(x)-f(y)|^2}{|x-y|^{n+2s}}dxdy + C\fint_{B_r}|f|^2dx. 
\end{equation*}
\end{theorem}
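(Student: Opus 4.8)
The plan is to reduce the inequality to the unit ball $B_1$ by a rescaling argument and then quote the fractional Sobolev embedding for extension domains. The first observation is that $2\kappa^* = \frac{2n}{n-2s}$ is exactly the fractional Sobolev conjugate exponent associated with the pair $(p,s)=(2,s)$; note that $2s<n$ automatically here, since $n\ge 2$ and $s\in(0,1)$. Since $B_1$ is a bounded smooth (in particular Lipschitz) domain it is an extension domain for $H^s$, so Theorem 6.7 of \cite{Hitch}, applied with $p=2$, $q=2\kappa^*$ and $\Omega=B_1$, yields a constant $C=C(n,s)$ such that $\|g\|_{L^{2\kappa^*}(B_1)}\le C\|g\|_{H^s(B_1)}$ for every $g\in H^s(B_1)$. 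Squaring this, using the paper's convention $\|g\|_{H^s(B_1)}=[g]_{H^s(B_1)}+\|g\|_{L^2(B_1)}$ together with $(a+b)^2\le 2a^2+2b^2$, and dividing through by the fixed constant $|B_1|^{1/\kappa^*}$ (recall $2/(2\kappa^*)=1/\kappa^*$) gives the $r=1$ case of the claim:
\[
\left(\fint_{B_1}|g|^{2\kappa^*}dx\right)^{1/\kappa^*}\le C\int_{B_1}\int_{B_1}\frac{|g(x)-g(y)|^2}{|x-y|^{n+2s}}dxdy + C\fint_{B_1}|g|^2dx.
\]

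Next I would transfer this to $B_r$. Given $f\in H^s(B_r)$, set $g(x)=f(rx)$ for $x\in B_1$, so that $g\in H^s(B_1)$. Each of the three terms scales in the required way. Since averages over concentric balls are scale invariant, $\fint_{B_1}|g|^{2\kappa^*}dx=\fint_{B_r}|f|^{2\kappa^*}dx$ and $\fint_{B_1}|g|^2dx=\fint_{B_r}|f|^2dx$. For the Gagliardo seminorm, the change of variables $x\mapsto rx$, $y\mapsto ry$ gives
\[
\int_{B_1}\int_{B_1}\frac{|g(x)-g(y)|^2}{|x-y|^{n+2s}}dxdy = r^{2s-n}\int_{B_r}\int_{B_r}\frac{|f(x)-f(y)|^2}{|x-y|^{n+2s}}dxdy,
\]
which is precisely the factor $r^{2s-n}$ appearing in the statement. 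Substituting these three identities into the $r=1$ inequality produces the asserted estimate on $B_r$, with the same constant $C=C(n,s)$.

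There is essentially no real obstacle here beyond bookkeeping. The two points that need a moment of care are: (i) checking that $B_1$ is an extension domain, so that Theorem 6.7 of \cite{Hitch} applies with the endpoint exponent $q=2\kappa^*$ included; and (ii) tracking the scaling exponents, in particular verifying that the weight $r^{2s-n}$ on the seminorm term is exactly what makes that term invariant under $g(x)=f(rx)$, so that the general-$r$ statement is literally the rescaled unit-ball statement.
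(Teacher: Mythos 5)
Your proof is correct and follows exactly the route the paper itself indicates: apply Theorem 6.7 of \cite{Hitch} on the unit ball (an extension domain, with $2s<n$ guaranteed by $n\ge 2$) and then obtain the $r$-dependence by the scaling $g(x)=f(rx)$, which is precisely what produces the factor $r^{2s-n}$ on the Gagliardo seminorm. Nothing further is needed.
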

The next lemma is standard in the theory of parabolic pde. 
\begin{theorem}\label{parabsobolev}
Suppose $u\in L^2(t_1,t_2;H^s(B_r))$, $s\in(0,1)$ and let $\kappa^* = \frac{n}{n-2s}$.  
Then for any $\kappa\in[1,\kappa^*]$,
\begin{align*}
&\int_{t_1}^{t_2}\fint_{B_r}|f|^{2\kappa }dxdt\\
&\le Cr^{2s-n}\int_{t_1}^{t_2}[f(\cdot,t)]_{H^s(B_r)}^2dt
\times\left(\sup_{t_1<t<t_2}\fint_{B_r}|f|^{\frac{2\kappa^*(\kappa-1)}{\kappa^*-1}}dx\right)^{\frac{\kappa^*-1}{\kappa^*}}.\notag\\
&\le Cr^{-n}\int_{t_1}^{t_2}\|f(\cdot,t)\|_{L^2(B_r)}^2dt
\times\left(\sup_{t_1<t<t_2}\fint_{B_r}|f|^{\frac{2\kappa^*(\kappa-1)}{\kappa^*-1}}dx\right)^{\frac{\kappa^*-1}{\kappa^*}}.\notag
\end{align*}
\end{theorem}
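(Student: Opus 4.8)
The plan is to obtain the parabolic estimate by freezing time and invoking the elliptic fractional Sobolev embedding of Theorem~\ref{sobolevembedding} on each time slice, after a Hölder interpolation in the spatial variable that produces the factor $\bigl(\sup_t\fint_{B_r}|f|^{q}\bigr)^{(\kappa^*-1)/\kappa^*}$, and then integrating in time.

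Fix $t\in(t_1,t_2)$. Since $f(\cdot,t)\in H^s(B_r)$ for a.e.\ such $t$ by the hypothesis $f\in L^2(t_1,t_2;H^s(B_r))$, it lies in $L^{2\kappa^*}(B_r)$ by Theorem~\ref{sobolevembedding}, hence in $L^q(B_r)$ for $q:=\frac{2\kappa^*(\kappa-1)}{\kappa^*-1}\in[0,2\kappa^*]$, the range of $q$ corresponding to $\kappa\in[1,\kappa^*]$. Writing $|f|^{2\kappa}=|f|^{2}|f|^{2(\kappa-1)}$ and applying Hölder's inequality on $B_r$ with conjugate exponents $\kappa^*$ and $\frac{\kappa^*}{\kappa^*-1}$ gives
\[
\fint_{B_r}|f|^{2\kappa}\,dx\le\left(\fint_{B_r}|f|^{2\kappa^*}\,dx\right)^{1/\kappa^*}\left(\fint_{B_r}|f|^{q}\,dx\right)^{\frac{\kappa^*-1}{\kappa^*}},
\]
since $2(\kappa-1)\cdot\frac{\kappa^*}{\kappa^*-1}=q$; note that for $\kappa=1$ the last factor equals $1$ and for $\kappa=\kappa^*$ the inequality is an identity, so it is consistent at the endpoints. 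Applying Theorem~\ref{sobolevembedding} to the first factor on the right, and then bounding the second factor by its supremum over $(t_1,t_2)$, we obtain
\[
\fint_{B_r}|f|^{2\kappa}\,dx\le\left(Cr^{2s-n}[f(\cdot,t)]_{H^s(B_r)}^2+C\fint_{B_r}|f|^2\,dx\right)\left(\sup_{t_1<\tau<t_2}\fint_{B_r}|f|^{q}\,dx\right)^{\frac{\kappa^*-1}{\kappa^*}}.
\]

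Integrating this over $t\in(t_1,t_2)$ and using the scaling identity $\fint_{B_r}|f|^2\,dx=Cr^{-n}\|f(\cdot,t)\|_{L^2(B_r)}^2$ produces on the right the quantity $\bigl(Cr^{2s-n}\int_{t_1}^{t_2}[f(\cdot,t)]_{H^s(B_r)}^2dt+Cr^{-n}\int_{t_1}^{t_2}\|f(\cdot,t)\|_{L^2(B_r)}^2dt\bigr)$ times the supremum factor, which is precisely the content of the two displayed bounds in the statement (the lower-order $L^2$ contribution in Theorem~\ref{sobolevembedding} being carried along with the seminorm term). There is no substantial obstacle here: the argument is the routine parabolic bootstrap of an elliptic Sobolev inequality, and the only points needing care are (i) choosing the Hölder exponents so that the surviving power is exactly $q=\frac{2\kappa^*(\kappa-1)}{\kappa^*-1}$, together with the endpoint checks $\kappa=1,\kappa^*$, and (ii) tracking the powers of $r$ through the rescaling already built into Theorem~\ref{sobolevembedding} and through the identity $\fint_{B_r}|f|^2\simeq r^{-n}\|f\|_{L^2(B_r)}^2$; one also uses $n\ge2$, as required by Theorem~\ref{sobolevembedding}.
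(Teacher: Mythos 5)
Your proof is correct and follows essentially the same route as the paper's: the same splitting $|f|^{2\kappa}=|f|^2|f|^{2(\kappa-1)}$, Hölder with exponents $\kappa^*$ and $\kappa^*/(\kappa^*-1)$ on each time slice, Theorem~\ref{sobolevembedding} applied to the first factor, the supremum bound on the second, and integration in time. The extra endpoint checks and the bookkeeping of the powers of $r$ are fine and consistent with the paper's argument.
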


\begin{proof}
By H\"older's inequality and Lemma \ref{sobolevembedding} we have 
\begin{align*}
&\int_{t_1}^{t_2}\fint_{B_r}|f|^{2\kappa}dxdt = \int_{t_1}^{t_2}\fint_{B_r}|f|^2|f|^{2(\kappa-1)}dxdt\\
&\le\int_{t_1}^{t_2}\left(\fint_{B_r}|f|^{2\kappa^* }dx\right)^{\frac{1}{\kappa^*}} \left(\fint_{B_r}|f|^{\frac{2\kappa^*(\kappa-1)}{\kappa^*-1}}dx\right)^{\frac{\kappa^*-1}{\kappa^*}}dt\\
&\le \left(Cr^{2s-n}\int_{t_1}^{t_2}\int_{B_r}\int_{B_r}\frac{|f(x)-f(y)|^2}{|x-y|^{n+sp}}dxdydt + C\int_{t_1}^{t_2}\fint_{B_r}|f|^2dxdt\right)\\
&\quad\times \left(\sup_{t_1<t<t_2}\fint_{B_r}|f|^{\frac{2\kappa^*(\kappa-1)}{\kappa^*-1}}dx\right)^{\frac{\kappa^*-1}{\kappa^*}}.
\end{align*}
\end{proof}

The following weighted Poincar\'e inequality is due to Dyda and Kassmann. See Corollary 6 in \cite{DydaKassmann}. The correct $r$-dependence is again obtained by rescaling. 
\begin{lemma}\label{DK}
Let $s\in (0,1)$ and let $\psi$ be a radially decreasing function on $B_r = B_r(x_0)$ of the form $\psi(x) = \Psi(|x-x_0|)$ such that $\psi\equiv 1$ in $B_{r/2}$. 
Then there exists a constant $C$ depending on $s,n$ such that for all $f\in L^2(B_r)$, 
\begin{equation*}
\int_{B_r}|f(x)-u_{\psi}|^2\psi(x)dx\le Cr^{2s}\int_{B_r}\int_{B_r}\frac{|f(x)-f(y)|^2}{|x-y|^{n+2s}}\min\{\psi(x),\psi(y)\}dxdy, 
\end{equation*}
where 
\[
u_{\psi} = \frac{\int_{B_r}u\psi dx}{\int_{B_r}\psi dx}.
\]
\end{lemma}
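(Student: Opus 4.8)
The plan is to reduce the statement to the case $r=1$, $x_0=0$, which is exactly Corollary 6 of \cite{DydaKassmann}, and then recover the general case — in particular the factor $r^{2s}$ — by translation and scaling, as the remark preceding the lemma already anticipates. Given $f\in L^2(B_r(x_0))$, set $g(z)=f(x_0+rz)$ and $\tilde\psi(z)=\psi(x_0+rz)=\Psi(r|z|)$ for $z\in B_1=B_1(0)$. Since $\Psi$ is nonincreasing and $\psi\equiv 1$ on $B_{r/2}(x_0)$, the function $\tilde\psi$ is again radially decreasing on $B_1$ and equals $1$ on $B_{1/2}$, so the hypotheses of the lemma are scale invariant and the unit-ball inequality applies to the pair $(g,\tilde\psi)$.

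The second step is to track the two integrals under the change of variables $x=x_0+rz$, $y=x_0+rw$. The $\psi$-weighted mean is scale invariant, so $g_{\tilde\psi}=u_\psi$ and $\int_{B_r}|f-u_\psi|^2\psi\,dx=r^n\int_{B_1}|g-g_{\tilde\psi}|^2\tilde\psi\,dz$; on the other hand $|x-y|=r|z-w|$ and $dx\,dy=r^{2n}\,dz\,dw$ give $\int_{B_r}\int_{B_r}\frac{|f(x)-f(y)|^2}{|x-y|^{n+2s}}\min\{\psi(x),\psi(y)\}\,dx\,dy=r^{n-2s}\int_{B_1}\int_{B_1}\frac{|g(z)-g(w)|^2}{|z-w|^{n+2s}}\min\{\tilde\psi(z),\tilde\psi(w)\}\,dz\,dw$. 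Plugging the unit-ball estimate $\text{LHS}_1\le C\,\text{RHS}_1$ into these two identities and cancelling the common factor $r^n$ yields precisely the claimed inequality with constant $Cr^{2s}$, and no new constant is introduced, so $C=C(n,s)$.

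For completeness — this is the substantive content, carried out in \cite{DydaKassmann} — the unit-ball case itself can be argued as follows. Since $u_\psi$ minimizes $c\mapsto\int_{B_1}|f-c|^2\psi\,dx$, one may replace $u_\psi$ by the plain average $(f)_{B_{1/2}}$ over $B_{1/2}$. Splitting $B_1=B_{1/2}\cup(B_1\setminus B_{1/2})$ and using $\psi\equiv 1$ on $B_{1/2}$, the contribution of $B_{1/2}$ is bounded by the classical unweighted fractional Poincar\'e inequality on $B_{1/2}$, whose right-hand side is dominated by the right-hand side of the lemma (there $\min\{\psi(x),\psi(y)\}=1$ and the integrand is nonnegative). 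On the annulus, for $x\in B_1\setminus B_{1/2}$ one writes $f(x)-(f)_{B_{1/2}}=\fint_{B_{1/2}}(f(x)-f(y))\,dy$ and applies Jensen's inequality; since $\psi$ is radially decreasing and $|x|>|y|$ one has $\psi(x)=\min\{\psi(x),\psi(y)\}$, and since $|x-y|\le 2$ one has $\psi(x)\le 2^{n+2s}\min\{\psi(x),\psi(y)\}\,|x-y|^{-n-2s}$; integrating in $x$ and $y$ then bounds this term by the right-hand side of the lemma as well.

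The only genuinely delicate point is the weighted estimate on the annulus in the $r=1$ case, i.e.\ producing the singular kernel together with the weight $\min\{\psi(x),\psi(y)\}$ out of the merely bounded factor $\psi(x)$; this works precisely because $\psi$ is radially monotone, so the outer point realizes the minimum of the weight, and because the ball has bounded diameter. Everything else — the scale invariance of the hypotheses on $\psi$ and the accounting of powers of $r$ — is routine bookkeeping.
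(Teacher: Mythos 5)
Your proof is correct and follows the same route as the paper, which simply cites Corollary 6 of \cite{DydaKassmann} for the unit-ball case and notes that the factor $r^{2s}$ is obtained by rescaling; your bookkeeping of the powers of $r$ and of the scale invariance of the hypotheses on $\psi$ is accurate. The additional self-contained argument you give for the unit-ball case (replacing $u_\psi$ by the average over $B_{1/2}$ via the minimizing property, using the unweighted fractional Poincar\'e inequality there, and exploiting the radial monotonicity of $\psi$ on the annulus so that the outer point realizes the minimum of the weight) is also sound, and is in fact more elementary than the general framework used in \cite{DydaKassmann}.
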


\subsection{Caccioppoli type inequalities}

In this section we derive inequalities of Caccioppoli type that play a key role in all subsequent estimates. 
The formal computations made in the proofs can be justified in view of Remarks \ref{rmk1} and \ref{rmk2}. 
For the following algebraic lemma we refer to \cite{FelKass} where it occurs as Lemma 3.3.
\begin{lemma}\label{lemma_alg}
Assume $q>1$, $a,b>0$ and $\alpha,\beta\ge 0$. Then there exists a constant $c_q\sim 1+q$ such that 
\begin{align}
(b-a)\left(\alpha^{q+1}a^{-q}-\beta^{q+1}b^{-q}\right) & \ge \frac{1}{q-1}\alpha\beta\left[\left(\frac{b}{\beta}\right)^{\frac{1-q}{2}} - \left(\frac{a}{\alpha}\right)^{\frac{1-q}{2}}\right]^2\tag{i}\\
& - c_q(\beta-\alpha)^2\left[\left(\frac{b}{\beta}\right)^{1-q} - \left(\frac{a}{\alpha}\right)^{1-q}\right]\notag
\end{align}
If $q\in(0,1)$, $a,b>0$ and $\alpha,\beta\ge 0$, there exist positive constants $c_{1,q}\sim \frac{q}{1-q}$ and $c_{2,q}\sim \frac{q}{1-q} + \frac{1}{q}$ such that 
\begin{align}
(b-a)(\alpha^2a^{-q}-\beta^2b^{-q})& \ge c_{1,q}\left(\beta b^{\frac{1-q}{2}} - \alpha a^{\frac{1-q}{2}}\right)^2\tag{ii}\\
& - c_{2,q}(\beta-\alpha)^2(b^{1-q} + a^{1-q}).\notag 
\end{align}
\end{lemma}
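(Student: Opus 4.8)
The statement is purely algebraic, so the plan is to reduce both inequalities to elementary one-variable estimates by a suitable normalization. For part (i), with $q>1$, I would first homogenize: both sides are homogeneous of degree $1$ in $(a,\alpha)$ and in $(b,\beta)$ jointly in the right way, so after dividing through I may assume $\alpha=1$ and write everything in terms of the two quantities $x=(a/\alpha)^{(1-q)/2}=a^{(1-q)/2}$ and $y=(b/\beta)^{(1-q)/2}=(b/\beta)^{(1-q)/2}$. The idea is to expand the left-hand side $(b-a)(\alpha^{q+1}a^{-q}-\beta^{q+1}b^{-q})$ in these variables, recognize the leading term as a multiple of $\alpha\beta(y-x)^2$ plus cross terms, and then absorb the cross terms into the two terms on the right using Young's inequality with a weight chosen to produce exactly the constant $1/(q-1)$ on the square term; the remaining error is controlled by $(\beta-\alpha)^2[(b/\beta)^{1-q}-(a/\alpha)^{1-q}]$, which after the normalization is a multiple of $(\beta-\alpha)^2(y^2-x^2)$. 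Tracking how the constant depends on $q$ shows $c_q\sim 1+q$. Since this is Lemma 3.3 of \cite{FelKass}, I would cite that reference for the detailed computation rather than reproduce it.

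For part (ii), with $q\in(0,1)$, the structure is similar but now the exponents behave differently because $1-q\in(0,1)$, so the ``square'' that appears is $(\beta b^{(1-q)/2}-\alpha a^{(1-q)/2})^2$ and the error carries a $+$ sign, $(b^{1-q}+a^{1-q})$, rather than a difference. Again I would homogenize and set $\alpha=1$, introduce $X=a^{(1-q)/2}$, $Y=b^{(1-q)/2}$, $t=\beta$, expand $(b-a)(a^{-q}-\beta^2 b^{-q})$ and isolate a positive multiple of $(tY-X)^2$, bounding the remainder by a multiple of $(t-1)^2(X^2+Y^2)$ via Young's inequality. The size of the constants — $c_{1,q}\sim q/(1-q)$ and $c_{2,q}\sim q/(1-q)+1/q$ — comes from the fact that factors of $q$ and of $1/(1-q)$ enter when differentiating the power functions and when splitting the cross terms.

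The only genuine obstacle is bookkeeping: choosing the split in Young's inequality so that the coefficient of the square term is exactly $1/(q-1)$ (resp.\ $c_{1,q}$) and not merely comparable, and verifying that the leftover is dominated by the stated error term with the claimed $q$-dependence. There is no conceptual difficulty and no use of the PDE structure here; the lemma is an ingredient, via Lemma \ref{lemma_alg}, for the logarithmic and power-function test-function estimates in the Caccioppoli inequalities that follow. Accordingly, in the write-up I expect to state the lemma and refer to \cite{FelKass} for the proof, perhaps indicating the normalization $\alpha=1$ and the variables above for the reader's convenience.
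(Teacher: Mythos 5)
Your proposal is correct and matches the paper's approach: the paper gives no proof of this lemma either, stating only ``For the following algebraic lemma we refer to \cite{FelKass} where it occurs as Lemma 3.3,'' which is exactly where you end up. The normalization-plus-Young's-inequality sketch you outline is consistent with the computation carried out in that reference, so deferring the bookkeeping to the citation is appropriate here.
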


Lemma \ref{lemma_negativecacc} and \ref{lemma_positivecacc} below are, respectively, Caccioppoli inequalities for negative and small positive powers of supersolutions. They will be used in the proof of the weak Harnack inequality. In the case of supersolutions that are nonnegative in all space, they occur implicitly in \cite{FelKass}. We here allow the supersolutions to go below zero and thus need to additionally take into account the contribution of their negative parts.

\begin{lemma}\label{lemma_negativecacc}
Let $x_0\in\R^n$ and for any $\rho>0$, let $B_\rho = B_\rho(x_0)$. Let $0<r<R$ and let $p>0$. Suppose $u$ is a supersolution to \eqref{maineq} such that 
$$u\ge 0\text{ in }B_R\times(\tau_1-\lag,\tau_2),\quad (\tau_1-\lag,\tau_2)\subset (0,T).$$ Then for any $d>0$ and $\tilde u = u+d$, there exists a constant $C = C(n,s,\Lambda,p)$ that behaves like $C_0(n,s,\Lambda)(1+p^2)$, such that 
\begin{align}\label{cclemmanegative}
& \int_{\tau_1-\lag}^{\tau_2}\int_{B_r}\int_{B_r}\psi(x)\psi(y)\left[\left(\frac{\tilde u(x,t)}{\psi(x)}\right)^{-\frac{p}{2}} - \left(\frac{\tilde u(y,t)}{\psi(y)}\right)^{-\frac{p}{2}}\right]^2\eta(t)d\mu dt\\
& + \sup_{\tau_1<t<\tau_2}\int_{B_r}\psi^{p+2}(x)\tilde u^{-p}(x,t)dx\notag\\
& \le C\int_{\tau_1-\lag}^{\tau_2}\int_{B_r}\int_{B_r}(\psi(x)-\psi(y))^2\left[\left(\frac{\tilde u(x,t)}{\psi(x)}\right)^{-p} + \left(\frac{\tilde u(y,t)}{\psi(y)}\right)^{-p}\right]\eta(t)d\mu dt\notag\\
& + C\sup_{x\in\supp\psi}\int_{\R^n\setminus B_r}\frac{dy}{|x-y|^{n+2s}} \int_{\tau_1-\lag}^{\tau_2}\int_{B_r} \tilde u^{-p}(x,t)\psi^{p+2}(x,t)\eta(t)dxdt \notag\\
& + \frac{C}{d}\sup_{\stackrel{\tau_1-\lag<t<\tau_2}{x\in\supp\psi}}\int_{\R^n\setminus B_R}\frac{u_-(y,t)dy}{|x-y|^{n+2s}}\int_{\tau_1-\lag}^{\tau_2}\int_{B_r} \tilde u^{-p}(x,t)\psi^{p+2}(x,t)\eta(t)dxdt\notag\\
& + C\int_{\tau_1-\lag}^{\tau_2}\int_{B_r}\psi^{p+2}(x)\tilde u^{-p}(x,t)\partial_t\eta(t)dxdt, \notag
\end{align}
for all nonnegative $\psi\in C_0^\infty(B_r)$ and nonnegative $\eta\in C^{\infty}(\R)$ such that $\eta(t)\equiv 0 $ if $t\le \tau_1-\lag$ and $\eta\equiv 1$ if $t\ge \tau_2$. 
\end{lemma}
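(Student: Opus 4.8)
The plan is to test the weak formulation \eqref{weak2} (in the subinterval form \eqref{subintervalipp}/\eqref{dtu} from Remark \ref{rmk2}) with a test function of the form $\phi = \psi^{p+2}(x)\,\tilde u^{-p}(x,t)\,\eta(t)$, where the regularization of Remark \ref{rmk1} is used implicitly so that we may treat $u$ as bounded below and differentiable in $t$. The restriction $u\ge 0$ in $B_R\times(\tau_1-\lag,\tau_2)$ together with $d>0$ guarantees $\tilde u\ge d>0$ there, so negative powers are legitimate inside $B_R$; outside $B_R$ we must be careful since $\tilde u$ may be small or negative, and $\tilde u^{-p}$ is only applied via the cutoff $\psi^{p+2}$ supported in $B_r\subset\subset B_R$, so in the nonlocal bilinear form the factor at $y\notin B_r$ is genuinely $\psi^{p+2}(y)\tilde u^{-p}(y,t)=0$ only when $y\notin\supp\psi$. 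First I would handle the time term: since $\partial_t u\,(\psi^{p+2}\tilde u^{-p}\eta) = \psi^{p+2}\eta\,\partial_t\bigl(\tfrac{\tilde u^{1-p}}{1-p}\bigr)$ for $p\ne 1$ (with the usual logarithmic modification for $p=1$), an integration by parts in $t$ moves the derivative onto $\eta$ and produces the boundary term at $t=\tau_2$ contributing $\sup_{\tau_1<t<\tau_2}\int_{B_r}\psi^{p+2}\tilde u^{-p}dx$ (up to the $|1-p|$ factor absorbed into $C\sim C_0(1+p^2)$) plus the last term $C\int\!\!\int \psi^{p+2}\tilde u^{-p}\partial_t\eta\,dxdt$; the term at $t=\tau_1-\lag$ vanishes since $\eta\equiv 0$ there.

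For the nonlocal term $\int_{\tau_1-\lag}^{\tau_2}\E(u,\phi,t)\,dt$, I would split the domain $\R^n\times\R^n$ into $B_r\times B_r$, $B_r\times(\R^n\setminus B_r)$ (and its symmetric partner), and $(\R^n\setminus B_r)\times(\R^n\setminus B_r)$ where the integrand vanishes since $\psi$ is supported in $B_r$. On $B_r\times B_r$ the key is the pointwise algebraic inequality Lemma \ref{lemma_alg}(i): applying it with $q=p+1>1$, $a=\tilde u(y,t)$, $b=\tilde u(x,t)$, $\alpha=\psi(y)$, $\beta=\psi(x)$ (noting $\tilde u>0$ on $B_r$) converts $(u(x,t)-u(y,t))(\psi^{p+2}(x)\tilde u^{-p}(x,t)-\psi^{p+2}(y)\tilde u^{-p}(y,t))$ — which equals $(\tilde u(x)-\tilde u(y))(\psi(x)^{q+1}\tilde u(x)^{-q}-\psi(y)^{q+1}\tilde u(y)^{-q})$ — into $\frac{1}{q-1}\psi(x)\psi(y)\bigl[(\tilde u(x)/\psi(x))^{-p/2}-(\tilde u(y)/\psi(y))^{-p/2}\bigr]^2$ minus $c_q(\psi(x)-\psi(y))^2\bigl[(\tilde u(x)/\psi(x))^{-p}-(\tilde u(y)/\psi(y))^{-p}\bigr]$; integrating against $d\mu\,\eta$, using $|\cdot|\le$ sum, and using the ellipticity \eqref{ellipticity} to turn the good term into the Gagliardo-type quantity on the left produces the first term on the left and the first term on the right. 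The remaining step is the long-range contribution: on $\supp\psi\times(\R^n\setminus B_r)$ write $\phi(x,t)-\phi(y,t)=\psi^{p+2}(x)\tilde u^{-p}(x,t)\eta$ (since $\phi(y,\cdot)=0$), and bound $(u(x,t)-u(y,t))\psi^{p+2}(x)\tilde u^{-p}(x,t)$. For $y\in B_R\setminus B_r$ one has $u(y,t)\ge 0$, so $u(x,t)-u(y,t)\le \tilde u(x,t)$, giving $\le \psi^{p+2}(x)\tilde u^{-p+1}(x,t)\le (\text{something})\cdot\tilde u^{-p}\psi^{p+2}$ after using $\tilde u\le$ bounded — more precisely one keeps $\tilde u^{1-p}\le C(\sup\psi)\,\cdot$ — actually the clean route is: $u(x,t)-u(y,t)\le \tilde u(x,t)$ for $y\in B_R$ and $u(x,t)-u(y,t)\le \tilde u(x,t)+u_-(y,t)$ for $y\notin B_R$; pairing $\tilde u(x,t)$ with $\tilde u^{-p}(x,t)$ yields $\tilde u^{1-p}(x,t)\le \tfrac{1}{d}\tilde u^{1-p}\cdot d$ and a further use of $\tilde u^{-p}\psi^2\le \tilde u^{-p}\psi^{p+2}\cdot(\text{bdd})$ together with $\int_{\R^n\setminus B_r}|x-y|^{-n-2s}dy$ over $x\in\supp\psi$ produces the third term on the right, while the $u_-(y,t)$ piece, divided and multiplied by $d$, gives the fourth term $\frac{C}{d}\sup\int_{\R^n\setminus B_R}\frac{u_-(y,t)}{|x-y|^{n+2s}}dy\cdot\int\!\!\int\tilde u^{-p}\psi^{p+2}\eta$.

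\textbf{Main obstacle.} The hardest point is the bookkeeping in the long-range term: one must carefully separate the contribution of $y\in B_R\setminus B_r$ (where positivity of $u$ is available and one only needs $\int_{\R^n\setminus B_r}|x-y|^{-n-2s}dy<\infty$) from $y\in\R^n\setminus B_R$ (where only $u_-$ survives and the factor $1/d$ appears, since $\tilde u\ge d$ is all we can say about the $x$-side to keep the integral of $\tilde u^{-p}\psi^{p+2}$ finite), and to confirm that in every case the residual power of $\tilde u(x,t)$ is controlled by $\tilde u^{-p}(x,t)$ up to a bounded factor — this is where the precise exponents $p+2$ on $\psi$ and the passage $1-p\le -p+1$ handled via $\tilde u^{1-p} = \tilde u\cdot\tilde u^{-p}\le (\text{this is not bounded})$ forces one instead to absorb $\tilde u$ against $\tilde u^{-p}$ only after noting $\tilde u\le$ something, OR — the actual mechanism — to estimate $u(x,t)-u(y,t)\le \tilde u(x,t)$ and then use $\tilde u(x,t)\psi^{p+2}(x)\tilde u^{-p}(x,t)=\psi^{p+2}(x)\tilde u^{1-p}(x,t)$ and bound this crudely by $\psi^2(x)\tilde u^{-p}(x,t)\cdot\sup_{B_r}(\psi^p\tilde u)$, which is again not obviously bounded; the resolution in the statement is that the right-hand side keeps $\tilde u^{-p}\psi^{p+2}$ and the extra factor $\tilde u$ is simply written into the constant via $d\le\tilde u$ on $\supp\psi$ so $\tilde u^{1-p}=\tilde u\cdot\tilde u^{-p}$, and since the term is a \emph{supremum over $x$ and $t$} of a finite quantity times $\int\!\!\int\tilde u^{-p}\psi^{p+2}\eta$, one tracks that the stray $\tilde u$ is bounded on the relevant set. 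I would devote most care to making this estimate clean, and to verifying the $p\to 1$ limiting case separately (replacing $\tilde u^{1-p}/(1-p)$ by $\log\tilde u$), after which collecting all terms and invoking \eqref{ellipticity} to pass between $d\mu$ and $|x-y|^{-n-2s}dxdy$ finishes the proof.
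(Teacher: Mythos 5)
Your overall strategy is the paper's: test with a cutoff times a negative power of $\tilde u$, split the bilinear form into $B_r\times B_r$ and the long-range part, apply Lemma \ref{lemma_alg}(i) on the local part, use $u\ge 0$ in $B_R$ and isolate the $u_-$ contribution outside $B_R$ with the factor $1/d$, and pick the terminal time to realize the supremum. However, there is a genuine and consequential error: your test function has the wrong exponent on $\tilde u$. To produce the conclusion \eqref{cclemmanegative} one must take $\phi = \psi^{p+2}\,\tilde u^{-(p+1)}\,\eta$ (the paper's $\tilde u^{-q}\psi^{q+1}\eta$ with $q=p+1>1$), not $\phi=\psi^{p+2}\,\tilde u^{-p}\,\eta$. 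Your own identification betrays this: you claim $\psi^{p+2}(x)\tilde u^{-p}(x)-\psi^{p+2}(y)\tilde u^{-p}(y)$ "equals" $\psi(x)^{q+1}\tilde u(x)^{-q}-\psi(y)^{q+1}\tilde u(y)^{-q}$ with $q=p+1$, which is false; $\psi^{q+1}\tilde u^{-q}=\psi^{p+2}\tilde u^{-(p+1)}$.

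This off-by-one error breaks the proof in three places. First, the time term: $\partial_t\tilde u\cdot\tilde u^{-p}=\partial_t\bigl(\tilde u^{1-p}/(1-p)\bigr)$ produces a boundary term in $\tilde u^{1-p}$, not $\tilde u^{-p}$; worse, for $p\in(0,1)$ this boundary term at $t_2$ has the wrong sign to be moved to the left-hand side of the supersolution inequality (that sign regime is the one exploited in Lemma \ref{lemma_positivecacc}, with a time-reversed cutoff), so you do not obtain the supremum term at all. With the correct exponent the antiderivative is $-\tilde u^{-p}/p$ wait — is $\tilde u^{1-q}/(1-q)=-\tilde u^{-p}/p$, which has the right sign for every $p>0$ and never degenerates, so your proposed "logarithmic modification for $p=1$" is a phantom case. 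Second, Lemma \ref{lemma_alg}(i) applied to the actual integrand $(\tilde u(x)-\tilde u(y))(\psi^{p+2}(x)\tilde u^{-p}(x)-\psi^{p+2}(y)\tilde u^{-p}(y))$ forces $q=p$, which for $p<1$ is outside the hypothesis $q>1$ and in any case yields $[(\tilde u/\psi)^{(1-p)/2}-\cdots]^2$ rather than $[(\tilde u/\psi)^{-p/2}-\cdots]^2$. Third — and this is exactly the difficulty you circle around in your "main obstacle" paragraph without resolving — the long-range estimate pairs $u(x,t)-u(y,t)\le\tilde u(x,t)+u_-(y,t)$ with the test function at $x$: with $\tilde u^{-(p+1)}(x)$ this gives $\tilde u^{-p}(x)$ directly (no stray factor) and $u_-(y)\tilde u^{-(p+1)}(x)\le d^{-1}u_-(y)\tilde u^{-p}(x)$ via $\tilde u\ge d$ on $\supp\psi$, which are precisely the third and fourth right-hand terms. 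With your exponent $-p$ you are left with $\tilde u^{1-p}(x)$, and your attempted fixes ("the stray $\tilde u$ is bounded on the relevant set") are not valid, since $u$ is not assumed bounded above. Replacing the test function by $\psi^{p+2}\tilde u^{-(p+1)}\eta$ repairs all three issues simultaneously and reduces your argument to the paper's.
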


\begin{proof}
Let $\tilde u = u+d$ and let $\psi\in C_c^\infty(B_r)$. Let $t_1=\tau_1-\lag$, let $t_2\in(\tau_1,\tau_2)$ and let $\eta\in C^\infty(t_1,t_2)$ satisfy $\eta(t_1)=0$ and $\eta(t)=1$ for all $t\ge t_2$. Define, for $q>1$, 
\[
v(x,t) = \tilde u^{\frac{1-q}{2}}(x,t),\quad \phi(x,t) = \tilde u^{-q}\psi^{q+1}\eta(t). 
\]
Since $\tilde u$ is a supersolution we obtain 
\begin{align}\label{Is}
0 & \le \int_{t_1}^{t_2}\int_{B_r} \partial_t\tilde u(x,t)\phi(x,t)dxdt\\
& + \int_{t_1}^{t_2}\int_{B_r}\int_{B_r}(\tilde u(x,t) - \tilde u(y,t))(\phi(x,t)-\phi(y,t))d\mu(x,y,t)dt\notag\\
& + 2\int_{t_1}^{t_2}\int_{\R^n\setminus B_r}\int_{B_r}(\tilde u(x,t) - \tilde u(y,t))\phi(x,t)d\mu(x,y,t)dt\notag\\
&=-\frac{1}{q-1}\left[\int_{B_r}\psi^{q+1}(x)\eta(t)v^2(x,t)dx\right]_{t_1}^{t_2}\notag\\
&+ \frac{1}{q-1}\int_{t_1}^{t_2}\int_{B_r}\psi^{q+1}(x)v^2(x,t)\partial_t\eta(t)dxdt\notag\\
& + \int_{t_1}^{t_2}\int_{B_r}\int_{B_r}(\tilde u(x,t) - \tilde u(y,t))\left(\frac{\psi^{q+1}(x)}{\tilde u^{q}(x,t)} - \frac{\psi^{q+1}(y)}{\tilde u^{q}(y,t)}\right)\eta(t)d\mu(x,y,t)dt\notag\\
& + 2\int_{t_1}^{t_2}\int_{\R^n\setminus B_r}\int_{B_r}(\tilde u(x,t) - \tilde u(y,t))\psi^{q+1}(x,t)\tilde u^{-q}(x,t)\eta(t)d\mu(x,y,t)dt\notag\\
& = I_0 + I_1 + I_2 + I_3.\notag
\end{align}
Since $u\ge 0$ in $B_R\times(t_1,t_2)$ we have, using that $d\le \tilde u$ in $B_R\times(t_1,t_2)$, 
\begin{align}\label{I3}
I_3 & \le 2\int_{t_1}^{t_2}\int_{\R^n\setminus B_r}\int_{B_r}\tilde v^2(x,t)\psi^{q+1}(x,t)\eta(t)d\mu(x,y,t)dt\\
& + \frac{2}{d}\int_{t_1}^{t_2}\int_{\R^n\setminus B_R}\int_{B_r} u_-(y,t)v^2(x,t)\psi^{q+1}(x,t)\eta(t)d\mu(x,y,t)dt\notag\\
& \le 2\Lambda\sup_{x\in\supp\psi}\int_{\R^n\setminus B_r}\frac{dy}{|x-y|^{n+2s}} \int_{t_1}^{t_2}\int_{B_r} v^2(x,t)\psi^{q+1}(x,t)\eta(t)dxdt \notag\\
& + \frac{2\Lambda}{d}\sup_{\stackrel{t_1<t<t_2}{x\in\supp\psi}}\int_{\R^n\setminus B_R}\frac{u_-(y,t)dy}{|x-y|^{n+2s}}\int_{t_1}^{t_2}\int_{B_r} v^2(x,t)\psi^{q+1}(x,t)\eta(t)dxdt.\notag
\end{align}
For $I_2$, we use Lemma \ref{lemma_alg} to estimate 
\begin{align}\label{I2}
&-I_2  \ge \int_{t_1}^{t_2}\int_{B_r}\int_{B_r}\frac{\psi(x)\psi(y)}{q-1}\left[\left(\frac{\tilde u(x,t)}{\psi(x)}\right)^{\frac{1-q}{2}} - \left(\frac{\tilde u(y,t)}{\psi(y)}\right)^{\frac{1-q}{2}}\right]^2\eta(t)d\mu dt\\
& - c_q\int_{t_1}^{t_2}\int_{B_r}\int_{B_r}(\psi(x)-\psi(y))^2\left[\left(\frac{\tilde u(x,t)}{\psi(x)}\right)^{1-q} + \left(\frac{\tilde u(y,t)}{\psi(y)}\right)^{1-q}\right]\eta(t)d\mu dt\notag
\end{align}
We now choose $t_2$ such that 
\begin{align}\label{I0}
-I_0 = \frac{1}{q-1}\int_{B_r}\psi^{q+1}(x)v^2(x,t_2)dx = \sup_{\tau_1<t<\tau_2}\frac{1}{q-1}\int_{B_r}\psi^{q+1}(x)v^2(x,t)dx. 
\end{align}
Using \eqref{I3}, \eqref{I2} and \eqref{I0} in \eqref{Is}, we obtain 
\begin{align}\label{ccnegative}
& \int_{t_1}^{t_2}\int_{B_r}\int_{B_r}\frac{\psi(x)\psi(y)}{q-1}\left[\left(\frac{\tilde u(x,t)}{\psi(x)}\right)^{\frac{1-q}{2}} - \left(\frac{\tilde u(y,t)}{\psi(y)}\right)^{\frac{1-q}{2}}\right]^2\eta(t)d\mu dt\\
& + \sup_{\tau_1<t<\tau_2}\frac{1}{q-1}\int_{B_r}\psi^{q+1}(x)v^2(x,t)dx\notag\\
& \le c_q\int_{t_1}^{t_2}\int_{B_r}\int_{B_r}(\psi(x)-\psi(y))^2\left[\left(\frac{\tilde u(x,t)}{\psi(x)}\right)^{1-q} + \left(\frac{\tilde u(y,t)}{\psi(y)}\right)^{1-q}\right]\eta(t)d\mu dt\notag\\
& + 2\Lambda\sup_{x\in\supp\psi}\int_{\R^n\setminus B_r}\frac{dy}{|x-y|^{n+2s}} \int_{t_1}^{t_2}\int_{B_r} v^2(x,t)\psi^{q+1}(x,t)\eta(t)dxdt \notag\\
& + \frac{2\Lambda}{d}\sup_{\stackrel{t_1<t<t_2}{x\in\supp\psi}}\int_{\R^n\setminus B_R}\frac{u_-(y,t)dy}{|x-y|^{n+2s}}\int_{t_1}^{t_2}\int_{B_r} v^2(x,t)\psi^{q+1}(x,t)\eta(t)dxdt\notag\\
& + \frac{1}{q-1}\int_{t_1}^{t_2}\int_{B_r}\psi^{q+1}(x)v^2(x,t)\partial_t\eta(t)dxdt.\notag
\end{align}
If we choose $t_2 = \tau_2$, we see that \eqref{ccnegative} holds with $\frac{1}{q-1}\int_{B_r}\psi^{q+1}(x)v^2(x,\tau_2)dx$ in place of $\sup_{\tau_1<t<\tau_2}\frac{1}{q-1}\int_{B_r}\psi^{q+1}(x)v^2(x,t)dx$. Let with $p=(q-1)/2$. Then $c_q\sim 1+p$ by Lemma \ref{lemma_alg} (i). This completes the proof of \eqref{cclemmanegative}. 

\end{proof}

\begin{lemma}\label{lemma_positivecacc}
Let $x_0\in\R^n$ and for any $\rho>0$, let $B_\rho = B_\rho(x_0)$. Let $0<r<R$ and $p\in(p_1,p_2)\subset(0,1)$. Suppose that $u$ is a supersolution to \eqref{maineq} such that 
$$u\ge 0\text{ in }B_R\times(\tau_1,\tau_2 + \lag),\quad (\tau_1,\tau_2 + \lag)\subset(0,T).$$ Then for any $d>0$ and $\tilde u = u+d$, there exists a constant $C = C(n,s,\Lambda,p_1,p_2)$ such that 
\begin{align}\label{cclemmapositive}
& \int_{\tau_1}^{\tau_2+\lag}\int_{B_r}\int_{B_r}\left[\tilde u(x,t)^{\frac{p}{2}}\psi(x) - \tilde u(y,t)^{\frac{p}{2}}\psi(y)\right]^2\eta(t)d\mu dt\\
& + \sup_{\tau_1<t<\tau_2}\int_{B_r}\psi^{2}(x)\tilde u^{p}(x,t)dx\notag\\
& \le C\int_{\tau_1}^{\tau_2+\lag}\int_{B_r}\int_{B_r}(\psi(x)-\psi(y))^2\left(\tilde u(x,t)^p + \tilde u(y,t)^p\right)\eta(t)d\mu dt\notag\\
& + C\sup_{x\in\supp\psi}\int_{\R^n\setminus B_r}\frac{dy}{|x-y|^{n+2s}} \int_{\tau_1}^{\tau_2+\lag}\int_{B_r} \tilde u^{p}(x,t)\psi^{2}(x,t)\eta(t)dxdt \notag\\
& + \frac{C}{d}\sup_{\substack{\tau_1<t<\tau_2+\lag\\x\in\supp\psi}}\int_{\R^n\setminus B_R}\frac{u_-(y,t)dy}{|x-y|^{n+2s}}\int_{\tau_1}^{\tau_2+\lag}\int_{B_r} \tilde u^{p}(x,t)\psi^{2}(x,t)\eta(t)dxdt\notag\\
& + C\int_{\tau_1}^{\tau_2+\lag}\int_{B_r}\psi^{2}(x)\tilde u^{p}(x,t)\partial_t\eta(t)dxdt, \notag
\end{align}
for all nonnegative $\psi\in C_0^\infty(B_r)$ and nonnegative $\eta\in C^{\infty}(\R)$ such that $\eta(t)\equiv 1 $ if $\tau_1\le t\le \tau_2$ and $\eta\equiv 0$ if $t\ge \tau_2+\lag$. 
\end{lemma}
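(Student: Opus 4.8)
The plan is to mirror the proof of Lemma \ref{lemma_negativecacc} almost verbatim, replacing the test function adapted to negative powers by one adapted to small positive powers and invoking part (ii) of the algebraic Lemma \ref{lemma_alg} instead of part (i). Concretely, for $p\in(p_1,p_2)\subset(0,1)$ set $q=p$ and use the test function
\[
\phi(x,t) = \tilde u^{-q}(x,t)\,\psi^{2}(x)\,\eta(t)
\]
in the weak supersolution inequality \eqref{weak2}, now with the cut-off $\eta$ supported to the right: $\eta\equiv 1$ on $[\tau_1,\tau_2]$ and $\eta\equiv 0$ for $t\ge\tau_2+\lag$, so that no boundary term at $t=\tau_2+\lag$ appears and the time term produces $-I_0$ as a negative multiple of $\int_{B_r}\psi^2\tilde u^{p}$ at an appropriately chosen $t_2\in(\tau_1,\tau_2)$ realising the supremum. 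As in the previous lemma, the admissibility of $\phi$ follows from Remark \ref{rmk1} (with the truncation $\max$ in place of $\min$, since the exponent $-q$ is negative), and one works qualitatively as if $u$ were bounded below and smooth in $t$, sending the regularization parameters to their limits at the end. Here $v(x,t):=\tilde u^{p/2}(x,t)$ plays the role that $\tilde u^{(1-q)/2}$ played before.

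The computation splits the resulting expression into $I_0+I_1+I_2+I_3$ exactly as in \eqref{Is}. The time term $I_0+I_1$ gives, after choosing $t_2$, the term $\sup_{\tau_1<t<\tau_2}\int_{B_r}\psi^2\tilde u^p\,dx$ on the left and the $\partial_t\eta$ term on the right. The tail term $I_3$ is handled identically: split $\R^n\setminus B_r = (\R^n\setminus B_r)\cap B_R$ where $\tilde u(y,t)\ge 0$ contributes with the favourable sign after using $\tilde u(y,t)^{-q}\le d^{-q}\cdot$ nothing — more precisely use $(\tilde u(x,t)-\tilde u(y,t))\tilde u^{-q}(x,t)\le \tilde u(x,t)^{1-q} = v^2(x,t)\cdot\tilde u(x,t)^{-q}\cdot\tilde u(x,t)$, i.e. bound the positive contribution by $v^2(x,t)$ up to constants using $\tilde u(x,t)-\tilde u(y,t)\le\tilde u(x,t)$ on $B_R$, and on $\R^n\setminus B_R$ bound $-\tilde u(y,t)\le u_-(y,t)$ and $\tilde u^{-q}(x,t)\le d^{-1}\tilde u^{1-q}(x,t)\cdot d^{\,?}$ — the exact bookkeeping is the same as in \eqref{I3}, producing the two supremum-of-tail terms, with the $1/d$ factor coming from estimating $u_-(y,t)\tilde u^{-q}(x,t)\le d^{-1}u_-(y,t)\tilde u^{1-q}(x,t) = d^{-1}u_-(y,t)v^2(x,t)$. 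The local term $I_2$ is where Lemma \ref{lemma_alg}(ii) enters: with $a=\tilde u(y,t)$, $b=\tilde u(x,t)$, $\alpha=\psi(y)$, $\beta=\psi(x)$ and exponent $q=p\in(0,1)$, part (ii) gives
\[
-I_2 \ge c_{1,p}\int\!\!\int\bigl[\psi(x)\tilde u(x,t)^{\frac{1-p}{2}} - \psi(y)\tilde u(y,t)^{\frac{1-p}{2}}\bigr]^2\eta\,d\mu\,dt
- c_{2,p}\int\!\!\int(\psi(x)-\psi(y))^2(\tilde u(x,t)^{1-p}+\tilde u(y,t)^{1-p})\eta\,d\mu\,dt.
\]
Note the exponent produced is $\tfrac{1-p}{2}$, whereas the statement of \eqref{cclemmapositive} has $\tfrac p2$; this discrepancy is reconciled by renaming — one should run the argument with $q$ chosen so that $\tfrac{1-q}{2}=\tfrac p2$, i.e. apply Lemma \ref{lemma_alg}(ii) with its parameter $q$ equal to $1-p$, which is again in $(0,1)$ since $p\in(0,1)$, and with the test function $\tilde u^{-(1-p)}\psi^2\eta$. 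Then $c_{1,q},c_{2,q}$ depend only on $p_1,p_2$ because $p$ is bounded away from $0$ and $1$, giving the constant $C=C(n,s,\Lambda,p_1,p_2)$; similarly the exponents $\tfrac{1-q}{2}=\tfrac p2$ and $1-q=p$ appear in the right-hand side tails as $\tilde u^p$, matching the statement.

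Assembling the pieces: substitute the lower bound for $-I_2$, the upper bound for $I_3$, and $-I_0 = \sup_{\tau_1<t<\tau_2}\int_{B_r}\psi^2 v^2\,dx$ (times a constant) back into $0\le I_0+I_1+I_2+I_3$, then divide through by the positive constant $c_{1,q}$ and absorb all $c_{2,q}$, $\Lambda$ factors into $C$. Using $\psi\in C_0^\infty(B_r)$ one replaces $\psi^{2}$ by $\psi^{2}$ throughout (here, unlike the negative-power case where powers $\psi^{q+1}$ appear, the relevant power is simply $\psi^2$, which is why the statement carries $\psi^2$ and $\psi^{p+2}$ does not appear). Finally, $v^2 = \tilde u^p$, so the left-hand side is exactly the asserted quantity. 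The main obstacle I anticipate is purely bookkeeping: keeping the exponents straight (choosing the test-function power so that the output of Lemma \ref{lemma_alg}(ii) matches $\tilde u^{p/2}$ and $\tilde u^p$ rather than their complements), and verifying that the constants $c_{1,q}\sim q/(1-q)$ and $c_{2,q}\sim q/(1-q)+1/q$ remain bounded above and below — which they do precisely because $p\in(p_1,p_2)\Subset(0,1)$, forcing $1-q=p\in(p_1,p_2)$ away from both endpoints. No new analytic difficulty beyond what was already overcome in Lemma \ref{lemma_negativecacc} should arise.
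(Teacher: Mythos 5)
Your proposal is correct and follows essentially the same route as the paper: the paper's proof uses exactly the test function $\tilde u^{-q}\psi^2\eta$ with a right-supported cut-off, splits into $I_0+I_1+I_2+I_3$ as in Lemma \ref{lemma_negativecacc}, invokes Lemma \ref{lemma_alg}(ii) for the local term, chooses the left time endpoint to realise the supremum, and sets $p=1-q$ at the end, with the constants controlled because $p\in(p_1,p_2)\Subset(0,1)$. Your reindexing $q=1-p$ so that $\tfrac{1-q}{2}=\tfrac p2$ and your treatment of $I_3$ (splitting at $B_R$ and using $\tilde u\ge d$ there to extract the $1/d$ factor) match the paper's computation.
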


\begin{proof}
Let $\tilde u = u+d$ and let $\psi\in C_c^\infty(B_r)$. Let $t_1\in(\tau_1,\tau_2)$, let $t_2 = \tau_2 + \lag$ and let $\eta\in C^\infty(t_1,t_2)$ satisfy $\eta(t_2)=0$ and $\eta(t)=1$ for all $t\le t_1$. Define, for $q\in(0,1)$, 
\[
v(x,t) = \tilde u^{\frac{1-q}{2}}(x,t),\quad \phi(x,t) = \tilde u^{-q}\psi^{2}\eta(t). 
\]
Since $\tilde u$ is a supersolution we have
\begin{align}\label{Is'}
0 & \le \int_{t_1}^{t_2}\int_{B_r} \partial_t\tilde u(x,t)\phi(x,t)dxdt\\
& + \int_{t_1}^{t_2}\int_{B_r}\int_{B_r}(\tilde u(x,t) - \tilde u(y,t))(\phi(x,t)-\phi(y,t))d\mu(x,y,t)dt\notag\\
& + 2\int_{t_1}^{t_2}\int_{\R^n\setminus B_r}\int_{B_r}(\tilde u(x,t) - \tilde u(y,t))\phi(x,t)d\mu(x,y,t)dt\notag\\
&=-\frac{1}{q-1}\left[\int_{B_r}\psi^{2}(x)\eta(t)v^2(x,t)dx\right]_{t_1}^{t_2}\notag\\
&+ \frac{1}{q-1}\int_{t_1}^{t_2}\int_{B_r}\psi^{2}(x)v^2(x,t)\partial_t\eta(t)dxdt\notag\\
& + \int_{t_1}^{t_2}\int_{B_r}\int_{B_r}(\tilde u(x,t) - \tilde u(y,t))\left(\frac{\psi^{2}(x)}{\tilde u^{q}(x,t)} - \frac{\psi^{2}(y)}{\tilde u^{q}(y,t)}\right)\eta(t)d\mu(x,y,t)dt\notag\\
& + 2\int_{t_1}^{t_2}\int_{\R^n\setminus B_r}\int_{B_r}(\tilde u(x,t) - \tilde u(y,t))\psi^{2}(x,t)\tilde u^{-q}(x,t)\eta(t)d\mu(x,y,t)dt\notag\\
& = I_0 + I_1 + I_2 + I_3.\notag
\end{align}
Since $u\ge 0$ in $B_R\times(t_1,t_2)$ we have, using that $d\le \tilde u$ in $B_R\times(t_1,t_2)$, 
\begin{align}\label{I3'}
I_3 & \le 2\int_{t_1}^{t_2}\int_{\R^n\setminus B_r}\int_{B_r} v^2(x,t)\psi^{2}(x,t)\eta(t)d\mu(x,y,t)dt\\
& + \frac{2}{d}\int_{t_1}^{t_2}\int_{\R^n\setminus B_R}\int_{B_r} u_-(y,t)v^2(x,t)\psi^{2}(x,t)\eta(t)d\mu(x,y,t)dt\notag\\
& \le 2\Lambda\sup_{x\in\supp\psi}\int_{\R^n\setminus B_r}\frac{dy}{|x-y|^{n+2s}} \int_{t_1}^{t_2}\int_{B_r} v^2(x,t)\psi^{2}(x,t)\eta(t)dxdt \notag\\
& + \frac{2\Lambda}{d}\sup_{\stackrel{t_1<t<t_2}{x\in\supp\psi}}\int_{\R^n\setminus B_R}\frac{u_-(y,t)dy}{|x-y|^{n+2s}}\int_{t_1}^{t_2}\int_{B_r} v^2(x,t)\psi^{2}(x,t)\eta(t)dxdt.\notag
\end{align}
For $I_2$, we use Lemma \ref{lemma_alg} to estimate 
\begin{align}\label{I2'}
-I_2 & \ge c_{1,q}\int_{t_1}^{t_2}\int_{B_r}\int_{B_r}\left(\psi(x) v(x,t) - \psi(y) v(y,t)\right)^2\eta(t)d\mu dt\\
& - c_{2,q}\int_{t_1}^{t_2}\int_{B_r}\int_{B_r}(\psi(x)-\psi(y))^2(v^2(x,t) + v^2(y,t))\eta(t)d\mu dt.\notag
\end{align}
We now choose $t_1$ such that 
\begin{align}\label{I0'}
-I_0 = -\frac{1}{q-1}\int_{B_r}\psi^{2}(x)v^2(x,t_1)dx = \sup_{\tau_1<t<\tau_2}\frac{1}{1-q}\int_{B_r}\psi^{2}(x)v^2(x,t)dx. 
\end{align}
Using \eqref{I3'}, \eqref{I2'} and \eqref{I0'} in \eqref{Is'}, we obtain 
\begin{align}\label{ccpositive}
& c_{1,q}\int_{t_1}^{t_2}\int_{B_r}\int_{B_r}\left(\psi(x) v(x,t) - \psi(y) v(y,t)\right)^2\eta(t)d\mu dt\\
& + \sup_{\tau_1<t<\tau_2}\frac{1}{1-q}\int_{B_r}\psi^{2}(x)v^2(x,t)dx\notag\\
& \le c_{2,q}\int_{t_1}^{t_2}\int_{B_r}\int_{B_r}(\psi(x)-\psi(y))^2(v^2(x,t) + v^2(y,t))\eta(t)d\mu dt\notag\\
& + \le 2\Lambda\sup_{x\in\supp\psi}\int_{\R^n\setminus B_r}\frac{dy}{|x-y|^{n+2s}} \int_{t_1}^{t_2}\int_{B_r} v^2(x,t)\psi^{2}(x,t)\eta(t)dxdt \notag\\
& + \frac{2\Lambda}{d}\sup_{\stackrel{t_1<t<t_2}{x\in\supp\psi}}\int_{\R^n\setminus B_R}\frac{u_-(y,t)dy}{|x-y|^{n+2s}}\int_{t_1}^{t_2}\int_{B_r} v^2(x,t)\psi^{2}(x,t)\eta(t)dxdt \notag\\
& - \frac{1}{q-1}\int_{t_1}^{t_2}\int_{B_r}\psi^{2}(x)v^2(x,t)\partial_t\eta(t)dxdt\notag
\end{align}
If we choose $t_1 = \tau_1$, we see that \eqref{cclemmapositive} holds with $\frac{1}{1-q}\int_{B_r}\psi^{2}(x)v^2(x,\tau_1)dx$ in place of $\sup_{\tau_1<t<\tau_2}\frac{1}{1-q}\int_{B_r}\psi^{2}(x)v^2(x,t)dx$. This proves \eqref{cclemmapositive} with $p=1-q$. If $p\in(p_1,p_2)$, the constants $c_{1,q},c_{2,q}$ from Lemma \ref{lemma_alg} and $1/(1-q)$ can be bounded in terms of $p_1,p_2$ only. 

\end{proof}

Finally we need a Caccioppoli inequality for subsolutions. This is based on Theorem 1.4. in \cite{DKPpmin}. 

\begin{lemma} \label{lemma_subsolcacc}
Let $x_0\in\R^n$ and for any $\rho>0$, let $B_\rho = B_\rho(x_0)$. Suppose that $u$ is a subsolution to \eqref{maineq} and let $0<\tau_1<\tau_2$ and $\lag>0$ satisfy $(\tau_1-\lag,\tau_2)\subset(0,T)$. 
Then there exists a constant $C = C(n,s,\Lambda)$ such that
\begin{align}\label{eq_subsolcacc}
&\int_{\tau_1-\lag}^{\tau_2}\int_{B_r}\int_{B_r}|u(x,t)\psi(x)-u(y,t)\psi(y)|^2\eta^2(t)d\mu dt\\
& + \frac{1}{2}\sup_{\tau_1<t<\tau_2}\int_{B_r}u^2(x,t)\psi^2(x)dx \notag\\
& \le C\int_{\tau_1-\lag}^{\tau_2}\int_{B_r}\int_{B_r}\max\{u^2(x,t),u^2(y,t)\}|\psi(x)-\psi(y)|^2\eta^2(t)d\mu dt\notag\\
& + C\sup_{\substack{\tau_1-\lag<t<\tau_2\\x\in\supp\psi}}\int_{\R^n\setminus B_r}\frac{u_+(y,t)dy}{|x-y|^{n+2s}}\int_{\tau_1-\lag}^{\tau_2}\int_{B_r}u(x,t)\psi^2(x)\eta^2(t)dxdt\notag\\
& + \frac{1}{2}\int_{\tau_1-\lag}^{\tau_2}\int_{B_r}u^2(x,t)\psi^2(x)\partial_t\eta^2(t)dxdt.\notag 
\end{align}
    for all nonnegative $\psi\in C_0^\infty(B_r)$ and nonnegative $\eta\in C^{\infty}(\R)$ such that $\eta(t)\equiv 0 $ if $t\le \tau_1-\lag$ and $\eta\equiv 1$ if $t\ge \tau_1$. 
    
	\end{lemma}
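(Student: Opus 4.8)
The plan is to test the weak subsolution inequality with $\phi = u\,\psi^2\eta^2$ and then carefully dissect the resulting bilinear form. After the regularization of Remark~\ref{rmk1} (so that $u$ may be treated as bounded and differentiable in $t$ and the truncated test function is admissible — recall also that $u_+$ is a subsolution by Lemma~\ref{lemma_subsol}), I would apply \eqref{subintervalipp} on the interval $(\tau_1-\lag,t_2)$ for a time $t_2\in(\tau_1,\tau_2]$ to be chosen. Since $\eta(\tau_1-\lag)=0$ the left endpoint term drops, and since $\eta(t_2)=1$ the right endpoint term is $\int_{B_r}u^2(x,t_2)\psi^2(x)\,dx$, so that
\[
\int_{B_r}u^2(x,t_2)\psi^2(x)\,dx - \int_{\tau_1-\lag}^{t_2}\!\!\int_{B_r}u\,\partial_t\phi\,dxdt + \int_{\tau_1-\lag}^{t_2}\E(u,\phi,t)\,dt \le 0.
\]

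The time term is routine: expanding $u\,\partial_t\phi = \tfrac12\partial_t(u^2)\psi^2\eta^2 + u^2\psi^2\partial_t(\eta^2)$ and integrating the first summand by parts in $t$ (the left endpoint again contributes nothing, the right endpoint gives $\tfrac12\int_{B_r}u^2(x,t_2)\psi^2\,dx$), one obtains after rearranging
\[
\tfrac12\int_{B_r}u^2(x,t_2)\psi^2(x)\,dx + \int_{\tau_1-\lag}^{t_2}\E(u,\phi,t)\,dt \le \tfrac12\int_{\tau_1-\lag}^{t_2}\!\!\int_{B_r}u^2\psi^2\,\partial_t(\eta^2)\,dxdt .
\]
Choosing $t_2$ so that $\int_{B_r}u^2(x,t_2)\psi^2$ (nearly) realizes $\sup_{\tau_1<t<\tau_2}$, and separately taking $t_2=\tau_2$ to recover the full energy integral over $(\tau_1-\lag,\tau_2)$ — the usual Moser device already used in Lemmas~\ref{lemma_negativecacc}--\ref{lemma_positivecacc} — will produce the two left-hand terms and the last right-hand term of \eqref{eq_subsolcacc}.

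The heart of the matter is the bilinear term $\int\E(u,\phi,t)\,dt$. Because $\psi$, and hence $\phi$, is supported in $B_r$, I would split $\R^n\times\R^n$ into the diagonal block $B_r\times B_r$ and the off-diagonal block $(B_r\times B_r^c)\cup(B_r^c\times B_r)$, the latter two being equal by symmetry of $K$. On $B_r\times B_r$ the integrand is $\eta^2(t)K(x,y,t)\big(u(x,t)-u(y,t)\big)\big(u(x,t)\psi^2(x)-u(y,t)\psi^2(y)\big)$, and here the key input is the elementary pointwise inequality
\[
(a-b)\big(a\alpha^2-b\beta^2\big)\ \ge\ \tfrac12\big(a\alpha-b\beta\big)^2 - C\,\max\{a^2,b^2\}\,(\alpha-\beta)^2,\qquad a,b\in\R,\ \alpha,\beta\ge 0,
\]
the $p=2$ analogue of Lemma~\ref{lemma_alg}, cf.\ the proof of Theorem~1.4 in \cite{DKPpmin}, applied with $a=u(x,t)$, $b=u(y,t)$, $\alpha=\psi(x)$, $\beta=\psi(y)$. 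This lower bound contributes precisely the energy term on the left of \eqref{eq_subsolcacc} together with the first term on its right. On the off-diagonal block we may assume $x\in B_r$, $y\notin B_r$, $\psi(y)=0$, so the integrand equals $\big(u^2(x,t)-u(x,t)u(y,t)\big)\psi^2(x)\eta^2(t)K$; discarding the nonnegative term $u^2(x,t)\psi^2(x)\eta^2(t)K$ and using $u(y,t)\le u_+(y,t)$ (and $u\ge0$ on $B_r$), it is bounded below by $-u(x,t)\,u_+(y,t)\,\psi^2(x)\eta^2(t)K$. Estimating $K\le\Lambda|x-y|^{-n-2s}$ and factoring out $\sup_{x\in\supp\psi,\,t}\int_{\R^n\setminus B_r}u_+(y,t)|x-y|^{-n-2s}\,dy$ then yields the tail term on the right of \eqref{eq_subsolcacc}. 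Collecting the three pieces and absorbing harmless constants would complete the proof.

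I expect the bilinear term to be the only genuinely delicate step: extracting the good sign from the diagonal block via the algebraic inequality, and keeping track of signs on the off-diagonal block so that only the positive part $u_+$ of the far-away values survives in the tail. Everything else — the integration by parts in $t$, the passage to the supremum in $t_2$, and the approximation of test functions — is standard once Remarks~\ref{rmk1} and \ref{rmk2} are invoked.
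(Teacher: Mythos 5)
Your proposal is correct and follows essentially the same route as the paper: test with $\phi=u\psi^2\eta^2$, split the bilinear form into the $B_r\times B_r$ block (handled by the algebraic inequality from the proof of Theorem~1.4 in \cite{DKPpmin}) and the off-diagonal block (where only $u_+(y,t)$ survives, using $u\ge 0$ on $\supp\psi$), integrate the time term by parts, and run the usual two choices of the terminal time $t_2$ to capture both the energy and the supremum terms. Nothing further is needed.
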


\begin{proof}
Let $t_1 = \tau_1-\lag$ and let $t_2\in(t_1,\tau_2]$. Using $\phi(x,t) = u(x,t)\psi^2(x)\eta^2(t)$ as a test function in \eqref{weak2}, appealing to Remark \ref{rmk2}, we get 
\begin{align}\label{subcaccIs}
0 & \ge \int_{t_1}^{t_2}\int_{B_r}\int_{B_r}(u(x,t)-u(y,t))(u(x,t)\psi^2(x)-u(y,t)\psi^2(y))\eta^2(t)d\mu dt\\
& + 2\int_{t_1}^{t_2}\int_{\R^n\setminus B_r}\int_{B_r}
(u(x,t)-u(y,t))u(x)\psi^2(x)\eta^2 d\mu dt\notag\\
& + \int_{t_1}^{t_2}\int_{B_r}u(x,t)\eta^2(t)\psi^2(x)\partial_t u(x,t)dxdt\notag\\
& = I_1 + I_2 + I_3.\notag 
\end{align}
Using the assumptions on $\eta$ and integrating by parts, we find 
\begin{align}\label{subcaccI3}
I_3 & = \int_{t_1}^{t_2}\int_{B_r}\eta^2(t)\psi^2(x)\partial_t \frac{u^2(x,t)}{2}dxdt\\
& = \int_{B_r}\frac{u^2(x,t_2)}{2}\psi^2(x)dx - 
\int_{t_1}^{t_2}\int_{B_r}\frac{u^2(x,t)}{2}\psi^2(x)\partial_t\eta^2(t)dxdt. \notag
\end{align}
Turning then to $I_2$ we have 
\begin{align}\label{subcaccI2}
I_2 & \ge -2\int_{t_1}^{t_2}\int_{\R^n\setminus B_r}\int_{B_r}
u(y,t)u(x,t)\psi^2(x)\eta^2 d\mu dt\\
& \ge -2\Lambda\int_{t_1}^{t_2}\int_{\R^n\setminus B_r}\int_{B_r}
\frac{u_+(y,t)}{|x-y|^{n+2s}}u(x,t)\psi^2(x)\eta^2 dxdy dt\notag\\
& \ge -2\Lambda\sup_{\substack{t_1<t<t_2\\x\in\supp\psi}}
\int_{\R^n\setminus B_r}\frac{u_+(y,t)dy}{|x-y|^{n+2s}}\int_{t_1}^{t_2}\int_{B_r}u(x,t)\psi^2(x)\eta^2(t)dxdt.\notag
\end{align}
For the estimation of $I_1$ we refer to the proof of Theorem 1.4 in \cite{DKPpmin}, where it is shown that 
\begin{align}\label{subcaccI1}
I_1 & \ge \frac12\int_{t_1}^{t_2}\int_{B_r}\int_{B_r}|u(x,t)\psi(x)-u(y,t)\psi(y)|^2\eta^2(t)d\mu dt\\
& - C\int_{t_1}^{t_2}\int_{B_r}\int_{B_r}\max\{u^2(x,t),u^2(y,t)\}|\psi(x)-\psi(y)|^2\eta^2(t)d\mu dt.\notag
\end{align}
If we use the estimates \eqref{subcaccI1}, \eqref{subcaccI2} and \eqref{subcaccI3} for $I_1$, $I_2$ and $I_3$ in \eqref{subcaccIs}, and choose $t_2= \tau_2$, we arrive at the desired conclusion save for the term 
\[
\frac{1}{2}\sup_{\tau_1<t<\tau_2}\int_{B_r}u^2(x,t)\psi^2(x)dx. 
\]
If we choose $t_2$ such that 
\[
\frac{1}{2}\int_{B_r}u^2(x,t_2)\psi^2(x)dx = \frac{1}{2}\sup_{\tau_1<t<\tau_2}\int_{B_r}u^2(x,t)\psi^2(x)dx, 
\]
we obtain an estimate for $\frac{1}{2}\sup_{\tau_1<t<\tau_2}\int_{B_r}u^2(x,t)\psi^2(x)dx$ in terms of the right hand side of \eqref{eq_subsolcacc}, with $t_2$ in place of $\tau_2$. 
This completes the proof. 
\end{proof}

\subsection{Estimation of Tails}

The remainder of this section is devoted to estimates of the tails in Definition \ref{def_tail}. We basically need two things here: 1. An estimate of the supremum version of the tail \eqref{thesupTail} in terms of "weaker" tail in \eqref{theTail}. 2. An estimate of $\tail(u_+;\cdots)$ in terms of $\tail(u_-,\cdots)$ and the local supremum of $u$. Point 2. can not be done for the supremum version of the tail directly, which is why point 1. is so important. Here we use an important tool from \cite{BV}. 

\begin{lemma}\label{lemma_eig}
Let $\Phi(x)$ be defined by 
\begin{equation*}
	\Phi(x) = 
    \left\{\begin{array}{l}
		1\text{ if }|x|<1,\\
        \frac{1}{(1+(|x|^2-1)^4)^{(n+2s)/8}}\text{ if }|x|\ge 1,
	\end{array}\right. 
\end{equation*}
and let $\Phi_r(x) = r^{-n}\Phi(x/r)$.
Then there exist constants $c_1\ge 1$ and $c_2\ge 1$, depending only on $n,s,\Lambda$, such that 
 
\begin{align}\label{LPhi_r}
&c_1^{-1}r^{-2s}\Phi_r(x) \le |L\Phi_r(x)| \le c_1r^{-2s}\Phi_r(x),\tag{i}\\
&c_2^{-1}\frac{r^{2s}}{|x|^{n+2s}}\le \Phi_r(x) \le c_2\frac{r^{2s}}{|x|^{n+2s}},\quad\text{for all }|x|\ge r. \label{Phi_r2}\tag{ii}
\end{align}
\end{lemma}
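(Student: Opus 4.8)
The plan is to establish (ii) first, since it is elementary, and then reduce (i) to a pointwise estimate on $L\Phi$ (taking $r=1$ by scaling) which we analyze by splitting the principal-value integral into regions according to the size of $|x|$. For (ii), note that when $|x|\ge 1$ the defining expression gives $\Phi(x) = (1+(|x|^2-1)^4)^{-(n+2s)/8}$; since for $|x|\ge 1$ we have $(|x|^2-1)^4 \le (1+(|x|^2-1)^4) \le C|x|^8$ with comparability constants depending on $n,s$ when $|x|$ is bounded below, one checks directly that $\Phi(x)\sim |x|^{-(n+2s)}$ for $|x|\ge 1$. Rescaling via $\Phi_r(x) = r^{-n}\Phi(x/r)$ then yields $\Phi_r(x)\sim r^{-n}(|x|/r)^{-(n+2s)} = r^{2s}|x|^{-(n+2s)}$ for $|x|\ge r$, which is (ii).

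For (i), by the scaling $L(f(\cdot/r))(x) = r^{-2s}(Lf)(x/r)$ together with $\Phi_r = r^{-n}\Phi(\cdot/r)$, it suffices to prove $c_1^{-1}\Phi(x)\le |L\Phi(x)|\le c_1\Phi(x)$ for all $x\in\R^n$, with $c_1$ depending only on $n,s,\Lambda$. The key structural facts are that $\Phi$ is smooth (the quartic inside the power keeps $\Phi\in C^2$ across $|x|=1$, unlike $(1+|x|^2)^{-(n+2s)/2}$-type profiles which are only $C^1$ there — this is the whole point of the $(|x|^2-1)^4$ construction), radially decreasing, bounded, and decays like $|x|^{-(n+2s)}$. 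I would bound $|L\Phi(x)|$ above by splitting $\int_{\R^n} = \int_{|x-y|<1} + \int_{|x-y|\ge 1}$: on the near region use the second-order Taylor expansion of $\Phi$ at $x$ together with $|D^2\Phi|\le C$ to get a bound $\le C\int_{|x-y|<1}|x-y|^{-n-2s}|x-y|^2\,dy \le C$, and also $\le C\Phi(x)$ using that $\Phi(x)$ is comparable to its average over a unit ball around $x$ (this requires a small Harnack-type comparison for $\Phi$: for $|x|\le 2$, $\Phi(x)\sim 1\sim$ the near-region bound, and for $|x|\ge 2$, $\Phi$ varies by a bounded factor on $B_1(x)$); on the far region use $\Phi(y)\le\Phi(x)$ up to constants when $|y|\lesssim|x|$ and the integrability of $|x-y|^{-n-2s}$ at infinity otherwise, again landing on $\le C\Phi(x)$. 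For the lower bound $|L\Phi(x)|\ge c_1^{-1}\Phi(x)$, the monotonicity and decay of $\Phi$ force $L\Phi(x)>0$ everywhere (a radially decreasing, non-constant profile has positive fractional Laplacian), and one extracts a definite positive lower bound proportional to $\Phi(x)$ by keeping only the contribution of a well-chosen annulus around $x$ where $\Phi(x)-\Phi(y)$ is comparable to $\Phi(x)$; the ellipticity \eqref{ellipticity} lets us replace $K$ by $|x-y|^{-n-2s}$ throughout, up to the factor $\Lambda$.

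The main obstacle is the uniformity of these estimates as $|x|\to\infty$ and the gluing across $|x|=1$: one must verify that all the bounds above are genuinely proportional to $\Phi(x)$ (not merely bounded), which hinges on the facts that (a) $\Phi$ is $C^2$ globally with $|D^2\Phi(x)|\le C\Phi(x)$ type control for large $|x|$, so the near-region Taylor estimate scales correctly, and (b) on $B_{|x|/2}(x)$ the function $\Phi$ oscillates by only a bounded factor, so $\Phi(y)\sim\Phi(x)$ there. These are exactly the estimates carried out in \cite{BV}, and I would invoke or reproduce the relevant computations from there; the rest is routine splitting of the singular integral using \eqref{ellipticity}.
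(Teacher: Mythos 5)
Your plan matches the paper's: prove (ii) by direct inspection of the explicit decay, reduce (i) to $r=1$ by rescaling, and invoke the estimates from \cite{BV} adapted to general symmetric kernels via the ellipticity bounds. The paper makes the rescaling precise by writing $L\Phi_r(x)=r^{-n-2s}(L_r\Phi)(x/r)$ with $L_r$ the operator with rescaled kernel $r^{n+2s}K(rz,r\eta,t)$, which has the same ellipticity constants as $L$; this is the correct form of your scaling identity $L(f(\cdot/r))(x)=r^{-2s}(Lf)(x/r)$ when $K$ is not scale-invariant, and it is what makes $c_1$ depend only on $\Lambda$ rather than on $r$. One caution in your sketch of the lower bound: ``a radially decreasing, non-constant profile has positive fractional Laplacian'' is not true in general (a compactly supported radial bump has $(-\Delta)^s$ strictly negative outside its support), so any positivity of $L\Phi$ for this profile hinges on the precise $|x|^{-n+2s}$-tail structure and belongs to the quantitative computation in \cite{BV} rather than to a soft monotonicity fact; fortunately the lemma only asserts a two-sided bound on $|L\Phi_r|$, which is exactly what the cited argument delivers.
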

\begin{proof}
The estimate \eqref{LPhi_r} is proved in \cite{BV}, in the case that $L=(-\Delta)^s$ and $r=1$. However, the proof can be easily adapted to symmetric kernels $K$ satisfying \eqref{ellipticity}. The constant $c_1$ will depend only on the ellipticity constant $\Lambda$. This establishes \eqref{LPhi_r} for $r=1$. 
For the rescaled function $\Phi_r$ we have, setting $z=x/r$ and $\eta = y/r$ ,
\begin{align*}
& L\Phi_r(x) = r^{-n}\int_{\R^n}K(x,y,t)(\Phi(x/r)-\Phi(y/r))dy\\
& = r^{-n-2s}\int_{\R^n}K(rz,r\eta,t)r^{n+2s}(\Phi(z)-\Phi(\eta))d\eta=: r^{-n-2s}(L_r\Phi)(x/r).
\end{align*}
The operator $L_r$, defined through the kernel 
\[K_r(x,y,t) = K(rz,r\eta,t)r^{n+2s},\] has the same ellipticity constants as $L$. Hence \eqref{LPhi_r} follows. It is easy to check \eqref{Phi_r2}  from the definition. 
\end{proof}

\begin{lemma}\label{tail_inf_le_tail}
Let $x_0\in \R^n$, $r>0$ and let $t_1,t_2$ satisfy $r^{2s}<t_1<T-r^{2s}$, $t_2=t_1+r^{2s}$.  
Suppose that $u$ is a weak subsolution to \eqref{maineq} that is nonnegative in $B_r(x_0)\times(t_1,t_2)$. Then for any $0<\e <r^{-2s}t_1$,  
\begin{align*}
\tail_\infty(u_+;x_0,r,t_1,t_2) & \le  C\e^{-1}\tail(u_+;x_0,r,t_1-\e r^{2s},t_2)\\
&+ C\e^{-1}\fint_{t_1-\e r^{2s}}^{t_2}\fint_{B_r(x_0)}u_+dxdt. 
\end{align*}

\end{lemma}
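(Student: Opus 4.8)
The statement bounds the supremum-in-time of a spatial tail integral by a time-averaged tail plus a time-averaged local $L^1$ norm, at the cost of a factor $\e^{-1}$. The natural vehicle is the function $\Phi_r$ from Lemma \ref{lemma_eig}, which behaves like the tail weight $|x-x_0|^{-n-2s}$ outside $B_r$ (by \eqref{Phi_r2}) and is an approximate eigenfunction of $L$ (by \eqref{LPhi_r}). The plan is to test the equation against $\Phi_r\eta$ for a suitable cutoff $\eta(t)$ in time, and exploit the sign of $L\Phi_r$ to convert the spatial tail at a single time into an integral over a time interval of length $\sim \e r^{2s}$.

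\textbf{Step 1: Reduce to $u_+$ and set up the test function.} By Lemma \ref{lemma_subsol}, $u_+$ is a subsolution, and on $B_r(x_0)\times(t_1,t_2)$ we have $u_+=u$; but outside $B_r$ we must keep $u_+$. Work with $w=u_+$, which is a nonnegative subsolution. For $\tau\in(t_1,t_2)$ fixed, choose a cutoff $\eta\in C^\infty(\R)$ with $\eta\equiv 0$ for $t\le t_1-\e r^{2s}$, $\eta\equiv 1$ for $t\ge\tau$, and $0\le\partial_t\eta\le C(\e r^{2s})^{-1}$. Use $\phi(x,t)=\Phi_r(x-x_0)\eta(t)$ (suitably regularized per Remark \ref{rmk1}, and truncated so that it lies in the right space — note $\Phi_r$ decays like $|x|^{-n-2s}$ so $\Phi_r\in H^s(\R^n)$; alternatively test on a large ball and let its radius go to infinity) in the weak formulation \eqref{subintervalipp} over $(t_1-\e r^{2s},\tau)$.

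\textbf{Step 2: Extract the spatial tail at time $\tau$.} From \eqref{subintervalipp} with $w$ a subsolution one gets
\[
\int_{\R^n}w(x,\tau)\Phi_r(x-x_0)\,dx \le \int_{t_1-\e r^{2s}}^{\tau}\int_{\R^n}w\,\Phi_r\,\partial_t\eta\,dx\,dt - 2\int_{t_1-\e r^{2s}}^{\tau}\E(w,\Phi_r\eta,t)\,dt.
\]
The first term on the right is controlled, using $\partial_t\eta\le C(\e r^{2s})^{-1}$ and \eqref{Phi_r2}, by $C\e^{-1}r^{-2s}\big(\int\int_{B_r}w + $ tail contributions$\big)$ over the interval $(t_1-\e r^{2s},t_2)$. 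For the energy term, split $\E(w,\Phi_r\eta,t)$: the bilinear form can be rewritten via the operator identity $2\E(w,v,t)=\int (Lw)\,v + \int w\,(Lv)$ heuristically — more carefully, $\E(w,\Phi_r,t)=\tfrac12\int_{\R^n}w(x,t)\,L\Phi_r(x-x_0)\,dx$ when $w$ is smooth enough (and in general this holds by symmetrizing the double integral, since $\Phi_r\in H^s$ and the kernel is symmetric, provided $w$ grows slowly enough, which it does as a subsolution on the relevant set). Then, because $L\Phi_r\le 0$ outside $B_r$ in a suitable sense while $L\Phi_r$ is bounded (by $Cr^{-2s}\Phi_r$) on $B_r$, and using \eqref{LPhi_r} to see $|L\Phi_r(x-x_0)|\le c_1 r^{-2s}\Phi_r(x-x_0)$, the energy term is bounded by $C r^{-2s}\int_{t_1-\e r^{2s}}^{t_2}\int_{\R^n}w(x,t)\Phi_r(x-x_0)\,dx\,dt$.

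\textbf{Step 3: Close the estimate and take the supremum.} Combining Steps 1--2 and using \eqref{Phi_r2} to pass between $\Phi_r$ and the tail weight (splitting $\int_{\R^n}w\Phi_r = \int_{B_r}w\Phi_r + \int_{\R^n\setminus B_r}w\Phi_r$, the first piece being a multiple of $r^{-n}\int_{B_r}w$, the second a multiple of $r^{-2s}\cdot r^{-2s}\tail(w;\cdots)/r^{2s}$... keeping careful track of the $r$-powers so the final constants match the claimed form), one arrives at
\[
r^{2s}\int_{\R^n\setminus B_r}\frac{w(x,\tau)}{|x-x_0|^{n+2s}}\,dx \le C\e^{-1}\,\tail(w;x_0,r,t_1-\e r^{2s},t_2) + C\e^{-1}\fint_{t_1-\e r^{2s}}^{t_2}\fint_{B_r}w\,dx\,dt,
\]
plus a term involving $r^{2s}\int_{B_r}w(x,\tau)\Phi_r$ which, since $\Phi_r$ is bounded by $r^{-n}$ on $B_r$, needs its own control — but by the assumed nonnegativity one can instead include $\int_{B_r}w(x,\tau)$ on the left and absorb it, or simply observe $\tail_\infty$ only involves the region outside $B_r$, so this term does not appear. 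The right-hand side is independent of $\tau\in(t_1,t_2)$, so taking the supremum over $\tau$ and recalling $\tail_\infty(u_+;x_0,r,t_1,t_2)=r^{2s}\sup_\tau\int_{\R^n\setminus B_r}w(x,\tau)|x-x_0|^{-n-2s}dx$ gives the claim.

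\textbf{Main obstacle.} The delicate point is Step 2: justifying that testing against the non-compactly-supported, merely $H^s$ weight $\Phi_r$ is legitimate, and correctly handling the sign of the nonlocal energy term. One must show $L\Phi_r$ does not have the wrong sign where it matters — precisely, that the ``bad'' contribution is confined to $B_r$ where $|L\Phi_r|$ is bounded by $Cr^{-2s}\Phi_r$, so it can be absorbed into the same type of term. This is exactly where Lemma \ref{lemma_eig}(i) is indispensable. A secondary technical nuisance is tracking the powers of $r$ through the rescalings so that the final constant depends only on $n,s,\Lambda$ (not on $r$), which the $r$-homogeneity built into Lemma \ref{lemma_eig} is designed to guarantee.
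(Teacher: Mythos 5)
Your proposal is correct and follows essentially the same route as the paper: reduce to the subsolution $u_+$ via Lemma \ref{lemma_subsol}, test with $\Phi_r\eta$, integrate by parts in time, and use Lemma \ref{lemma_eig}(i)--(ii) to control the energy term and translate $\Phi_r$ into the tail weight before taking the supremum over $\tau$. The only remark worth making is that your worry about the sign of $L\Phi_r$ is a non-issue: since $u_+\ge 0$, the crude bound $|L\Phi_r|\le c_1 r^{-2s}\Phi_r$ already absorbs the whole energy term, which is exactly what the paper does.
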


\begin{proof}
It may be assumed that $x_0=0$. Let $\delta>0$ and let $\tau=\tau_\delta\in(t_1,t_2)$ satisfy 
\begin{equation}\label{supminuse}
r^{2s}\int_{\R^n\setminus B_r}\frac{u_+(x,\tau)}{|x|^{n+2s}}dx\ge \tail_\infty(u_+;0,r,t_1,t_2)-\delta.
\end{equation}
Let $\Phi_r$ be the function in Lemma \ref{lemma_eig}. Let further $\eta\in C^\infty(\R)$ be a function satisfying $\eta\equiv 1$ in $[\tau,t_2]$, $\eta(t) = 0$ for $t\le t_1-\e r^{2s}$ and $|\eta'|\le C\e^{-1}r^{-2s}$. 
We recall from Lemma \ref{lemma_subsol} that $u_+$ is a weak subsolution and use $\phi = \Phi_r\eta$ as test function: 
\begin{align*}
&\int_{t_1-\e r^{2s}}^{\tau}\int_{\R^n}\Phi_r\eta\partial_tu_+dxdt\\
&+\int_{t_1-\e r^{2s}}^{\tau}\int_{\R^n}\int_{\R^n}(u_+(x,t)-u_+(y,t))(\Phi_r(x)-\Phi_r(y))\eta(t)d\mu(x,y,t)dt\le 0. 
\end{align*}
We then integrate by parts, to find
\begin{align*}
&\int_{\R^n}u_+(x,\tau)\Phi_r(x)dx\le \int_{t_1-\e r^{2s}}^{\tau}\int_{\R^n}u_+\Phi_r\partial_t\eta dxdt\\
&- 2\int_{t_1-\e r^{2s}}^{\tau}\int_{\R^n}u_+(x,t)\L\Phi_r(x)\eta(t)dxdt.
\end{align*}
Using Lemma \ref{lemma_eig} and the definition of $\eta$ yields
\begin{align*}
& \int_{\R^n}u_+(x,\tau)\Phi_r(x)dx\label{eq112}\\
&\le C\e^{-1}r^{-2s}\int_{t_1-\e r^{2s}}^{\tau}\int_{\R^n}u_+\Phi_rdxdt + 
C\int_{t_1-\e r^{2s}}^{\tau}\int_{\R^n}u_+r^{-2s}\Phi_r(x)dxdt\\\notag
&\le C\e^{-1}\fint_{t_1-\e r^{2s}}^{\tau}\fint_{B_r(x_0)}u_+dxdt + 
C\e^{-1}\int_{t_1-\e r^{2s}}^{\tau}\int_{\R^n\setminus B_r(x_0)}\frac{u_+}{|x-x_0|^{n+2s}}dxdt\\\notag
&\le C\e^{-1}\fint_{t_1-\e r^{2s}}^{t_2}\fint_{B_r(x_0)}u_+dxdt 
+ C\e^{-1}\tail(u_+;x_0,r,t_1-\e r^{2s},t_2), \notag
\end{align*}
where we used that $t_2-(t_1-\e r^{2s})\approx r^{2s}$. It is a consequence of the definition of $\Phi_r$ that 
\[
r^{2s}\int_{\R^n\setminus B_r}\frac{u_+(x,\tau)}{|x|^{n+2s}}dx\le C\int_{\R^n}u_+(x,\tau)\Phi_r(x)dx. 
\]
The lemma now follows from \eqref{supminuse} 
since $\delta$ is arbitrary. 
\end{proof}

\begin{corollary}\label{tail_inf_le_tail_minus}
Suppose $u$ is a weak supersolution to \eqref{maineq}. Let $x_0\in\R^n$ and $r>0$. Then for any $r^{2s}<t_1<T-r^{2s}$, $t_2=t_1+r^{2s}$ and any $0<\e<t_1r^{-2s}$,  
\begin{align*}
\tail_\infty(u_-;x_0,r,t_1,t_2) & \le  C\e^{-1}\tail(u_-;x_0,r,t_1-\e r^{2s},t_2)\\
&+ C\e^{-1}\fint_{t_1-\e r^{2s}}^{t_2}\fint_{B_r(x_0)}u_-dxdt. 
\end{align*}

\end{corollary}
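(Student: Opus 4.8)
The plan is to deduce the statement from Lemma \ref{tail_inf_le_tail} by replacing $u$ with $-u$. The first step is to observe that since equation \eqref{maineq} is linear and $\E$ is bilinear with $\E(-u,\phi,t)=-\E(u,\phi,t)$, if $u$ is a weak supersolution then $v:=-u$ is a weak subsolution: for every nonnegative test function $\phi$,
\[
\int_0^T\langle\partial_t v,\phi\rangle dt + \int_0^T\E(v,\phi,t)dt = -\left(\int_0^T\langle\partial_t u,\phi\rangle dt + \int_0^T\E(u,\phi,t)dt\right)\le 0.
\]
By Lemma \ref{lemma_subsol}, applied to $v$, the function $v_+=(-u)_+=u_-$ is then itself a weak subsolution to \eqref{maineq}, and this holds globally, with no sign restriction on $u$.

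The second step is simply to note that $u_-\ge 0$ on all of $\R^n\times(0,T)$, so in particular $u_-$ is nonnegative on $B_r(x_0)\times(t_1,t_2)$, which is precisely the hypothesis needed to apply Lemma \ref{tail_inf_le_tail} to the subsolution $u_-$. Since moreover $(u_-)_+=u_-$, Lemma \ref{tail_inf_le_tail} with $u_-$ in place of $u$ gives, for every $0<\e<r^{-2s}t_1$,
\[
\tail_\infty(u_-;x_0,r,t_1,t_2) \le C\e^{-1}\tail(u_-;x_0,r,t_1-\e r^{2s},t_2) + C\e^{-1}\fint_{t_1-\e r^{2s}}^{t_2}\fint_{B_r(x_0)}u_-\,dxdt,
\]
which is exactly the asserted inequality.

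There is essentially no genuine obstacle here; the corollary is a direct consequence of Lemma \ref{tail_inf_le_tail} once one knows that $u_-$ is a (global) subsolution. The only points requiring a word of justification are the linearity argument that $-u$ is a subsolution and the invocation of Lemma \ref{lemma_subsol}, the latter being exactly what makes the nonnegativity requirement in Lemma \ref{tail_inf_le_tail} automatically satisfied for $u_-$, so that no assumption of local positivity of $u$ is needed.
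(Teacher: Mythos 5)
Your proposal is correct and follows exactly the paper's own argument: $-u$ is a subsolution by linearity, hence $u_-=(-u)_+$ is a subsolution by Lemma \ref{lemma_subsol}, and Lemma \ref{tail_inf_le_tail} applies since $u_-\ge 0$ everywhere and $(u_-)_+=u_-$. No further comment is needed.
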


\begin{proof}
Since $u$ is a supersolution, $v=-u$ is a subsolution. Thus, by Lemma \ref{lemma_subsol}, $u_- = v_+$ is a subsolution and the result follows from Lemma \ref{tail_inf_le_tail}.  
\end{proof}

\begin{lemma}\label{lemma_tail+-}
Let $u$ be a weak solution to \eqref{maineq}. For $0<r<R/2$, suppose that $$u\ge 0\text{ in }B_R(x_0)\times(t_1,t_2),$$ where $0<t_1<T-r^{2s}$ and $t_2=t_1+r^{2s}$. 
Then 
\begin{equation}
\tail(u_+;x_0,r,t_1,t_2)\le C\sup_{B_r(x_0)\times(t_1,t_2)}u + C\left(\frac{r}{R}\right)^{2s}\tail(u_-;x_0,R,t_1,t_2). 
\end{equation}

\end{lemma}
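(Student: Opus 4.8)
The plan is to reduce the statement to a tail bound for $u$ itself and then test the equation against the barrier $\Phi_r$ of Lemma \ref{lemma_eig}. We may assume $x_0=0$. Using the pointwise identity $u_+=u+u_-$ and the fact that $u\ge 0$ in $B_R\times(t_1,t_2)$ forces $u_-(\cdot,t)\equiv 0$ on $B_R$ for $t\in(t_1,t_2)$, one gets
\[
\tail(u_+;0,r,t_1,t_2)=\frac{r^{2s}}{t_2-t_1}\int_{t_1}^{t_2}\int_{\R^n\setminus B_r}\frac{u(x,t)}{|x|^{n+2s}}\,dx\,dt+\Big(\frac{r}{R}\Big)^{2s}\tail(u_-;0,R,t_1,t_2),
\]
so it suffices to estimate the first term $\mathcal I$ by $C\sup_{B_r\times(t_1,t_2)}u+C(r/R)^{2s}\tail(u_-;0,R,t_1,t_2)$.

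Next, the heart of the argument. Write $F(t)=\int_{\R^n}u(x,t)\Phi_r(x)\,dx$ and use $\Phi_r$ as a test function on $(t_1,t_2)$ via Remark \ref{rmk2} (regularizing as in Remark \ref{rmk1}), so that $F(t_1)-F(t_2)=\int_{t_1}^{t_2}\E(u,\Phi_r,t)\,dt=2\int_{t_1}^{t_2}\int_{\R^n}u\,\L\Phi_r\,dx\,dt$. Now split $\R^n=B_r\cup(B_R\setminus B_r)\cup(\R^n\setminus B_R)$ and invoke Lemma \ref{lemma_eig}: $|\L\Phi_r|\sim r^{-2s}\Phi_r$ uniformly in $t$, $\Phi_r\equiv r^{-n}$ on $B_r$, and $\Phi_r(x)\sim r^{2s}|x|^{-n-2s}$ for $|x|\ge r$; together with the sign information that $\L\Phi_r>0$ on $B_r$ and $\L\Phi_r<0$ on $\R^n\setminus B_r$ (a property of the explicit profile $\Phi$). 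On $B_r$ we have $0\le u\le M:=\sup_{B_r\times(t_1,t_2)}u$, giving a contribution $O(Mr^{-2s})$ per unit time; on $B_R\setminus B_r$, $u=u_+\ge 0$ and $\L\Phi_r<0$ produce the favourable term $\le -c\int_{B_R\setminus B_r}u_+|x|^{-n-2s}\,dx$; on $\R^n\setminus B_R$ one writes $u=u_+-u_-$ and again extracts $\le -c\int_{\R^n\setminus B_R}u_+|x|^{-n-2s}\,dx$ plus an error $\le C\int_{\R^n\setminus B_R}u_-|x|^{-n-2s}\,dx$. Integrating in $t$ and using $t_2-t_1=r^{2s}$ yields
\[
c\,\tail(u_+;0,r,t_1,t_2)\le CM+\big(F(t_2)-F(t_1)\big)+C\Big(\frac{r}{R}\Big)^{2s}\tail(u_-;0,R,t_1,t_2),
\]
which, by the first step, would finish the proof once $F(t_2)-F(t_1)$ is controlled.

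The step I expect to be the main obstacle is precisely the time-boundary term $F(t_2)-F(t_1)=\int_{\R^n}\big(u(\cdot,t_2)-u(\cdot,t_1)\big)\Phi_r$. Over $B_R$ one uses positivity: $-\int_{B_R}u(\cdot,t_1)\Phi_r\le 0$ and $\int_{B_r}u(\cdot,t_2)\Phi_r\le CM$, while $\int_{B_R\setminus B_r}u(\cdot,t_2)\Phi_r$ and the exterior contributions $\int_{\R^n\setminus B_R}u_\pm(\cdot,t_i)\Phi_r$ are single-time quantities not directly dominated by the time-averaged quantities appearing on the right-hand side. These are handled by the supremum-tail estimates: $\int_{\R^n\setminus B_r}u_+(\cdot,t_2)\Phi_r\lesssim\tail_\infty(u_+;0,r,t_1,t_2)$ and $\int_{\R^n\setminus B_R}u_-(\cdot,t_1)\Phi_r\lesssim(r/R)^{2s}\tail_\infty(u_-;0,R,t_1,t_2)$, and then Lemma \ref{tail_inf_le_tail} and Corollary \ref{tail_inf_le_tail_minus} are used to pass from $\tail_\infty$ back to $\tail$, the first term being absorbed into the left-hand side. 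Arranging this so that the final estimate features only $\sup_{B_r\times(t_1,t_2)}u$ and $\tail(u_-;x_0,R,t_1,t_2)$ — and not quantities living on time intervals strictly larger than $(t_1,t_2)$ — is the genuinely parabolic difficulty flagged in the introduction; in the elliptic case $\int\E(u,\Phi_r)=0$ outright and no such term occurs.
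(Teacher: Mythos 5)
Your reduction via $u_+=u+u_-$ and the algebra with $\Phi_r$ are fine as far as they go, but the proof has a genuine gap exactly where you flag it, and I do not see how to close it along the lines you sketch. Testing with the global barrier $\Phi_r$ produces the time-boundary term $F(t_2)-F(t_1)=\int(u(\cdot,t_2)-u(\cdot,t_1))\Phi_r\,dx$, whose exterior pieces are single-time tails. Your plan is to dominate these by $\tail_\infty(u_+;x_0,r,t_1,t_2)$ and $\tail_\infty(u_-;x_0,R,t_1,t_2)$ and then invoke Lemma \ref{tail_inf_le_tail} and Corollary \ref{tail_inf_le_tail_minus}. But (a) those results convert $\tail_\infty$ into a $\tail$ over the strictly larger interval $(t_1-\e r^{2s},t_2)$, which is not controlled by $\tail(\cdot;x_0,\cdot,t_1,t_2)$ and is not permitted in the conclusion; (b) the conversion costs a factor $C\e^{-1}>1$, so the $u_+$ piece cannot be absorbed into the left-hand side $c\,\tail(u_+;x_0,r,t_1,t_2)$ — one always has $\tail_\infty\ge\tail$, so absorption would require a constant smaller than $c$, not larger; and (c) for the $u_-$ piece the corollary is calibrated to a time interval of length $R^{2s}$, not $r^{2s}$, so it does not even apply directly. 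In addition, your argument needs $\L\Phi_r<0$ on $\R^n\setminus B_r$ together with the matching lower bound on $|\L\Phi_r|$ there; Lemma \ref{lemma_eig} only controls $|\L\Phi_r|$, and the sign outside $B_r$ would need a separate justification for general kernels $K$.

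The paper avoids the boundary term altogether by localizing the test function in space: with $k=\sup_{B_r\times(t_1,t_2)}u$ it tests with $\phi=(u-2k)\psi^2$, where $\psi\in C_c^\infty(B_{3r/4})$, $\psi\equiv1$ on $B_{r/2}$. The time term then only sees $u$ on $B_r$, where $|u-2k|\le 2k$, giving $|I_1|\le Cr^nk^2$; the tail of $u_+$ is generated not from $\L\Phi_r$ but from the cross term $2\int_{t_1}^{t_2}\int_{\R^n\setminus B_r}\int_{B_r}(u(x,t)-u(y,t))\phi(x,t)\,d\mu\,dt$, using that $u(x,t)-2k\le -k$ on $B_r$; the whole estimate is then homogeneous of degree one in $k$ after dividing by $kr^n$. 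To salvage your barrier approach you would have to trade the single-time boundary term for a time-averaged one (e.g.\ by averaging over starting times), which is essentially what the localized test function accomplishes for free.
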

\begin{proof}
Let $\psi\in C_c^\infty(B_{3r/4})$ satisfy $\psi\equiv 1$ in $B_{r/2}$, $0\le \psi\le 1$ and $|\nabla\psi|\le C/r$. 
Let $k= \sup_{B_r\times(t_1,t_2)}u$. We test the equation \eqref{maineq} with $\phi = (u-2k)\psi^2$: 
\begin{align}\label{123}
0 & = \int_{t_1}^{t_2}\int_{B_r}\partial_tu\phi dxdt\\
&+ \int_{t_1}^{t_2}\int_{B_r}\int_{B_r}(u(x,t)-u(y,t))(\phi(x,t)-\phi(y,t))dxdydt\notag\\
&+ 2\int_{t_1}^{t_2}\int_{\R^n\setminus B_r}\int_{B_r}(u(x,t)-u(y,t))(u-2k)\psi^2 dxdydt\notag\\
&=I_{1} + I_{2} + I_{3}.\notag 
\end{align}
Integrating by parts in $I_1$, we immediately obtain 
\begin{equation*}
I_1 = \int_{t_1}^{t_2}\frac{\partial_t(u-2k)^2}{2}\psi^2dxdt = \frac12\int_{B_r}((u(x,t_2)-2k)^2-(u(x,t_1)-2k)^2)\psi^2(x)dx.
\end{equation*}
Hence 
\begin{equation}\label{1}
|I_1|\le Cr^nk^2. 
\end{equation}
We next estimate the integrand of $I_{2}$ under the assumption that $\psi(x)>\psi(y)$. Letting $w = (u-2k)$, we have 
\begin{align*}
&(w(x,t)-w(y,t))(w(x,t)\psi^2(x)-w(y,t)\psi^2(y))\\
& = (w(x,t)-w(y,t))^2\psi^2(x) - (w(x,t)-w(y,t))w(y,t)(\psi^2(x)-\psi^2(y))\\
&\ge (w(x,t)-w(y,t))^2\psi^2(x) - |w(x,t)-w(y,t)||w(y,t)|\psi(x)|\psi(x)-\psi(y)|\\
&\ge -k^2|\psi(x)-\psi(y)|^2, 
\end{align*}
where we used Young's inequality and the fact that $|w|\le k$ in $B_r\times(t_1,t_2)$. The same estimate is clearly valid if $\psi(y)\ge \psi(x)$ as can be seen by interchanging the roles of $x$ and $y$. We thus obtain 
\begin{align}\label{2}
I_2& \ge -Ck^2\int_{t_1}^{t_2}\int_{B_r}\int_{B_r}|\psi(x)-\psi(y)|^2d\mu dt\\
&\ge -Ck^2r^{-2}\int_{t_1}^{t_2}\int_{B_r}\int_{B_r}|x-y|^{2-n-2s}dxdydt \ge 
-Ck^2r^n.\notag
\end{align}
Using $\psi\equiv 1 $ in $B_{r/2}$, we find the following lower bound for $I_3$: 
\begin{align}\label{3}
I_3 & \ge \int_{t_1}^{t_2}\int_{\R^n\setminus B_r}\int_{B_{r/2}}(u(y,t)-k)_+kd\mu dt\\
&-2k\int_{t_1}^{t_2}\int_{\R^n\setminus B_r}\int_{B_r}(u(x,t)-u(y,t))_+\chi_{(u(y,t)<k)}\psi^2(x)d\mu dt\notag\\
&=I_{31}-I_{32}. \notag
\end{align}
Since $|x-y|\le |x| + |y|\le 2|y|$ whenever $x\in B_r$ and $y\in\R^n\setminus B_r$,
\begin{align}\label{31}
I_{31}&\ge C_0kr^n\tail(u_+;x_0,r,t_1,t_2) - Ck^2r^{2s}r^n\int_{\R^n\setminus B_r}|y|^{-n-2s}dy\\
&\ge C_0kr^n\tail(u_+;x_0,r,t_1,t_2) - Ck^2r^n. \notag
\end{align}
Similarly, using that $\psi\equiv 0$ in $\R^n\setminus B_{3r/4}$, we have for $x\in B_{3r/4}$ and $y\in \R^n\setminus B_r$ that 
$|x-y|\ge |y| -|x|\ge \frac{|y|}{4}$. This leads to the bound 
\begin{align}\label{32}
I_{32} & \le Ck^2r^n + Ckr^n\tail(u_-;x_0,r,t_1,t_2)\\
&\le Ck^2r^n + Ckr^n\left(\frac{r}{R}\right)^{2s}\tail(u_-;x_0,R,t_1,t_2) ,\notag 
\end{align}
where we also used the assumption on nonnegativity. 
From \eqref{123} and  \eqref{3} we get $$I_{31}\le I_{32}-I_2-I_1.$$  
In combination with \eqref{1}, \eqref{2}, \eqref{31} and \eqref{32}, this leads to  
\begin{align*}
C_0kr^n\tail(u_+;x_0,r,t_1,t_2) \le Ck^2r^n + Ckr^n\tail(u_-;x_0,R,t_1,t_2). 
\end{align*}
We complete the proof by dividing through with $C_0kr^n$. 

\end{proof}

\section{Weak Harnack inequality}\label{sec_weakHarnack}

Our proof of the weak Harnack inequality is based on the approach taken by Moser in \cite{Moser2}. In the case of globally nonnegative supersolutions, it was implemented in the nonlocal setting in \cite{FelKass}. 

We begin with an initial estimate of the local infimum of a supersolution. 
\begin{lemma}\label{lemma_uminusMoser}
Suppose that $u$ is a supersolution to \eqref{maineq} and assume that 
$u\ge 0$ in $B_R(x_0)\times(t_0-r^{2s},t_0)$, where $r<R/2$ and $r^{2s}<t_0 < T$. Let
\[
\tilde u = u + d,\quad\text{where }d \ge \left(\frac{r}{R}\right)^{2s}\tail_\infty(u_-;x_0,R,t_0-r^{2s},t_0). 
\]
Then for any $p>0$ and $\theta\in(0,1)$, there exists a constant $C=C(n,s,\Lambda,p)\ge 1$ such that  
\begin{align}\label{tildeutheta}
\sup_{U^-(x_0,t_0,\theta r)}\tilde u^{-1}\le \frac{C}{(1-\theta)^{\frac{n+2s}{p}}}\left(\fint_{U^-(x_0,t_0,r)}\tilde u^{-p }\right)^{\frac{1}{p}} .
\end{align}
\end{lemma}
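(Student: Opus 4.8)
The plan is to run a De Giorgi–Moser iteration on negative powers of $\tilde u$. Set $w = \tilde u^{-1}$, which is a nonnegative function, and observe that the Caccioppoli inequality of Lemma \ref{lemma_negativecacc} gives control of $w$ in the correct parabolic norms. The key point is that the hypothesis $d\ge (r/R)^{2s}\tail_\infty(u_-;x_0,R,t_0-r^{2s},t_0)$ is tailored precisely so that the fourth ("global tail") term on the right-hand side of \eqref{cclemmanegative} can be absorbed: since
\[
\frac{C}{d}\sup_{\substack{t\in(\tau_1-\lag,\tau_2)\\ x\in\supp\psi}}\int_{\R^n\setminus B_R}\frac{u_-(y,t)\,dy}{|x-y|^{n+2s}}\le \frac{C}{d}\,R^{-2s}\tail_\infty(u_-;x_0,R,t_0-r^{2s},t_0)\le \frac{C}{r^{2s}},
\]
this term is dominated by (a constant times $r^{-2s}$ times) $\int\int \tilde u^{-p}\psi^{p+2}\eta$, exactly like the preceding "local tail" term. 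Hence, after this absorption, Lemma \ref{lemma_negativecacc} applied with $\tilde u$ yields, for concentric balls $B_{r'}\subset B_{r''}$ inside $B_r$ and appropriate cutoffs $\psi,\eta$ with $|\nabla\psi|\le C/(r''-r')$ and $|\eta'|\le C/(r''-r')^{2s}$, a bound of the schematic form
\[
\int_{t'}^{t_0}[v(\cdot,t)]_{H^s(B_{r'})}^2\,dt + \sup_{t'<t<t_0}\int_{B_{r'}}v^2\,dx \le \frac{C(1+p^2)}{(r''-r')^{2s}}\int_{t''}^{t_0}\int_{B_{r''}}\tilde u^{-p}\,dx\,dt,
\]
where $v = \tilde u^{-p/2}$ and we have used the ellipticity \eqref{ellipticity} to pass between $d\mu$ and $|x-y|^{-n-2s}\,dx\,dy$ on $B_r\times B_r$ (the term with $(\psi(x)-\psi(y))^2[\tilde u^{-p}(x)+\tilde u^{-p}(y)]$ is bounded the same way, as in \eqref{2} of Lemma \ref{lemma_tail+-}).

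Next I would feed this into the parabolic Sobolev inequality, Theorem \ref{parabsobolev}, with $f = v = \tilde u^{-p/2}$ and exponent $\kappa = \kappa^* = n/(n-2s)$ (so the supremum factor has exponent $\frac{2\kappa^*(\kappa^*-1)}{\kappa^*-1}\cdot\frac{1}{\kappa^*}$... i.e. one takes $\kappa$ slightly below $\kappa^*$, or the standard choice making the right-hand side a product of the gradient term and the $\sup$ term, both of which are controlled above). This produces the reverse-Hölder / gain-of-integrability step: denoting $\Phi(p,\rho) = \left(\fint_{U^-(x_0,t_0,\rho)}\tilde u^{-p}\right)^{1/p}$, one obtains
\[
\Phi(\chi p, r')\le \left(\frac{C(1+p^2)}{(r''-r')^{2s}}\,r''^{\,2s}\right)^{1/p}\Phi(p,r'')
\]
for a fixed dilation constant $\chi = \kappa^*>1$ (absorbing the $\fint$ normalizations carefully, so that the $r$-powers combine to a pure function of the ratios $r'/r''$).

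Finally I would iterate: fix the starting exponent $p$ from the statement, choose radii $r_k = \theta r + (1-\theta)r\,2^{-k}$ and exponents $p_k = \chi^k p$, apply the one-step estimate at each scale, and multiply the resulting inequalities. The product $\sum_k \chi^{-k}\log(\cdots)$ converges because $(1+p_k^2)^{1/p_k}$ and the geometric factors $(r_k - r_{k+1})^{-2s/p_k}$ all contribute summable logarithms; the accumulated constant is $C/(1-\theta)^{(n+2s)/p}$ — the exponent $(n+2s)/p$ arising as $2s\sum_k \chi^{-k}\cdot(\text{multiplicity})$ divided by $p$, i.e. from $\frac{2s}{p}\cdot\frac{1}{1-\chi^{-1}} = \frac{2s}{p}\cdot\frac{\kappa^*}{\kappa^*-1} = \frac{n+2s}{p}$ after simplification. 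Passing $k\to\infty$ gives $\sup_{U^-(x_0,t_0,\theta r)}\tilde u^{-1}=\lim_k \Phi(p_k,r_k)$ bounded by $\frac{C}{(1-\theta)^{(n+2s)/p}}\Phi(p,r)$, which is \eqref{tildeutheta}. The main obstacle is bookkeeping in the iteration: tracking the $r$-powers through the non-normalized Sobolev inequality so that only ratios of radii appear, and verifying that the $p$-dependent constant $C(1+p^2)$ from Lemma \ref{lemma_negativecacc}, raised to the power $1/p_k$ and multiplied over $k$, yields a finite constant depending only on $n,s,\Lambda$ and the initial $p$; this is routine but must be done with care, and one also uses Remark \ref{rmk1} to justify that $\tilde u^{-q}\psi^{q+1}\eta$ is an admissible test function (working with $\min\{u,M\}$ and sending $M\to\infty$, $h\to 0$).
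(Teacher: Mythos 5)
Your plan is the same Moser iteration the paper uses: set $v=\tilde u^{-p/2}$, control $v$ via the Caccioppoli inequality of Lemma \ref{lemma_negativecacc}, absorb the global tail term exactly as you describe (the choice of $d$ gives $\frac{1}{d}R^{-2s}\tail_\infty(u_-;\dots)\le r^{-2s}$, so that term is no worse than the local one), feed the result into Theorem \ref{parabsobolev}, and iterate $p\mapsto \kappa p$. One small correction: the gain factor is $\kappa=\frac{n+2s}{n}$, not $\kappa^*=\frac{n}{n-2s}$; with $\chi=\kappa^*$ your "after simplification" identity $\frac{2s}{p}\cdot\frac{\kappa^*}{\kappa^*-1}=\frac{n+2s}{p}$ is false (it equals $\frac{n}{p}$), whereas with $\chi=\frac{n+2s}{n}$ it is correct.

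The one place you genuinely diverge from the paper is the treatment of $\theta$, and there your bookkeeping does not deliver the stated exponent as written. The paper runs the iteration only from $r$ down to $r/2$ with radii $r_j=\frac r2(1+2^{-j})$, obtaining a constant independent of $\theta$, and then gets the factor $(1-\theta)^{-\frac{n+2s}{p}}$ for general $\theta$ by a covering argument: $U(\theta r)$ is covered by cylinders $U(z_k,\tau_k,(1-\theta)r/2)\subset U(r)$, and the exponent arises purely as the measure ratio $\bigl(|U(r)|/|U((1-\theta)r)|\bigr)^{1/p}=(1-\theta)^{-(n+2s)/p}$. You instead shrink the radii all the way from $r$ to $\theta r$, so each Caccioppoli step carries a factor of a negative power of $(1-\theta)$, and these accumulate. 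If you only count the $(r_k-r_{k+1})^{-2s}$ factors, the total is indeed $(1-\theta)^{-\frac{2s}{p}\cdot\frac{\kappa}{\kappa-1}}=(1-\theta)^{-\frac{n+2s}{p}}$; but the estimates you invoke for the other Caccioppoli terms are cruder. Bounding the cutoff-difference term as in \eqref{2} of Lemma \ref{lemma_tail+-} gives $\delta_j^{-2}r^{2-2s}\sim(1-\theta)^{-2}2^{2j}r^{-2s}$ per step, which accumulates to $(1-\theta)^{-\frac{n+2s}{sp}}$, and the paper's own bound for the nonlocal term ($2^{(n+2s)j}r_j^{-2s}$, coming from $(1+r/\delta_j)^{n+2s}$) would give $(1-\theta)^{-\frac{(n+2s)^2}{2sp}}$. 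To get exactly $(1-\theta)^{-\frac{n+2s}{p}}$ by your route you must use the sharp bound $C\delta_j^{-2s}$ for \emph{every} right-hand-side term (splitting $|x-y|\lessgtr\delta_j$ in the cutoff term, and estimating $\sup_x\int_{\R^n\setminus B_j}|x-y|^{-n-2s}dy\le C\,\mathrm{dist}(\supp\psi_j,\partial B_j)^{-2s}$ directly). Since the precise power of $(1-\theta)$ is immaterial for Lemma \ref{lemma_SC} and the weak Harnack inequality, this is a cosmetic defect, but as stated your sketch proves the lemma only with a larger exponent; either sharpen the per-step bounds or adopt the paper's covering argument.
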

\begin{proof}
We set
	\begin{equation*}
		r_0 = r, \quad r_j = \frac{r}{2}(1+2^{-j}), \quad \delta_j = 2^{-j} r,\quad j=1,2,\ldots
	\end{equation*}
	and
	\begin{equation*}
		U_j = B_j \times \Gamma_j = B_{r_j}(x_0) \times (t_0-r_j^{2s}, t_0). 
	\end{equation*}
	We choose nonnegative test functions $\psi_{j} \in C^\infty(B_j)$ and $\zeta_j \in C^{\infty}(\Gamma_j)$ satisfying 
	\begin{equation}\label{deltaspace_inf}
	\psi_j\equiv 1\text{ in }B_{j+1},\;\text{dist}( \supp \psi_j,\R^n\setminus B_j) \ge \frac{\delta_j}{2},
	\end{equation}	
 such that for $\phi_j = \psi_j \zeta_j$ we have
	\begin{equation*}
		0 \leq \phi_j \leq 1, \quad \phi_j=1 \text{ in } U_{j+1},\quad \phi_j(x,t_0-r_j^{2s}) = 0,
	\end{equation*}
	and
	\begin{equation} \label{eqphi_inf}
		|\nabla\phi_j| \leq \frac{C}{r}2^j = C\delta_j^{-1}, \quad \left| \frac{\partial \phi_j}{\partial t}\right| \leq \frac{C}{2s r^{2s}}2^{2sj} = C\delta_j^{-2s}.
	\end{equation}
Let $v = \tilde u^{-\frac{p}{2}}$. 
By the Sobolev embedding theorem (Theorem \ref{parabsobolev}), with $\kappa = \frac{n+2s}{n}$, there holds,  
\begin{align}\label{sobolevUj+1}
&\int_{\Gamma_{j+1}}\fint_{B_{j+1}}|v|^{2\kappa}dxdt\\
&\quad\le Cr_j^{2s-n}\int_{\Gamma_{j+1}}\int_{B_{j+1}}\int_{B_{j+1}}\frac{|v(x,t)-v(y,t)|^2}{|x-y|^{n+2s}}dxdydt\notag\\
&\quad\quad\times\left(\sup_{\Gamma_{j+1}}\fint_{B_{j+1}}|v|^{2}\right)^{\frac{2s}{n}} \notag\\
&  \quad\quad + C\int_{\Gamma_{j+1}}\fint_{B_{j+1}}v^2dxdt\left(\sup_{\Gamma_{j+1}}\fint_{B_{j+1}}|v|^{2}\right)^{\frac{2s}{n}}\notag\\
&\quad\quad = Cr_{j+1}^{2s-n}I_1\times \left(\frac{I_2}{|B_{j+1}|}\right)^{\frac{2s}{n}} + C\int_{\Gamma_{j+1}}\fint_{B_{j+1}}v^2dxdt\times \left(\frac{I_2}{|B_{j+1}|}\right)^{\frac{2s}{n}}, \notag 
\end{align}
where 
\[
I_1 = \int_{\Gamma_{j+1}}\int_{B_{j+1}}\int_{B_{j+1}}\frac{|v(x,t)-v(y,t)|^2}{|x-y|^{n+2s}}dxdydt
\]
and 
\[
I_2 = \sup_{\Gamma_{j+1}}\int_{B_{j+1}}|v|^{2}.
\]
To estimate $I_1$ and $I_2$ we use the Cacciopollo inequality in Lemma \ref{lemma_negativecacc}, with $r=r_j$, $\tau_2=t_2$, $\tau_1 = t_1-r_{j+1}^{2s}$ and $\lag = r_j^{2s}-r_{j+1}^{2s}$. This leads to  
\begin{align}\label{I1I2}
&I_1 + I_2\\
& \le \int_{\Gamma_j}\int_{B_j}\int_{B_j}\psi_j(x)\psi_j(y)\left[\left(\frac{\tilde u(x,t)}{\psi_j(x)}\right)^{-\frac{p}{2}} - \left(\frac{\tilde u(y,t)}{\psi_j(y)}\right)^{-\frac{p}{2}}\right]^2\eta_j(t)d\mu dt\notag\\
& + \sup_{\Gamma_{j+1}}\int_{B_j}\psi_j^{p+2}(x)\tilde u^{-p}(x,t)dx\notag\\
& \le C\int_{\Gamma_j}\int_{B_j}\int_{B_j}(\psi_j(x)-\psi_j(y))^2\left[\left(\frac{\tilde u(x,t)}{\psi_j(x)}\right)^{-p} + \left(\frac{\tilde u(y,t)}{\psi_j(y)}\right)^{-p}\right]\eta_j(t)d\mu dt\notag\\
& + C\sup_{x\in\supp\psi_j}\int_{\R^n\setminus B_j}\frac{dy}{|x-y|^{n+2s}} \int_{\Gamma_{j}}\int_{B_j} \tilde u^{-p}(x,t)\psi^{p+2}(x,t)\eta(t)dxdt \notag\\
& + \frac{C}{d}\sup_{\stackrel{\Gamma_j}{x\in\supp\psi_j}}\int_{\R^n\setminus B_R}\frac{u_-(y,t)dy}{|x-y|^{n+2s}}\int_{\Gamma_j}\int_{B_j} \tilde u^{-p}(x,t)\psi_j^{p+2}(x,t)\eta_j(t)dxdt\notag\\
& + C\int_{\Gamma_j}\int_{B_j}\psi_j^{p+2}(x)\tilde u^{-p}(x,t)\partial_t\eta_j(t)dxdt = J_1 + J_2 + J_3 + J_4, \notag 
\end{align}
where $C\le C_0(n,s,\Lambda)(1+p^2)$. Due to our assumption on $\psi_j$, 
\begin{align}\label{J1}
J_1 & \le 2C\Lambda\frac{2^{2j}}{r^2} \sup_{x\in B_j}\int_{B_j}\frac{|x-y|^2dy}{|x-y|^{n+2s}}\int_{\Gamma_j}\int_{B_j}v^2(x,t)dxdt\\
&\le C\frac{2^{2j}}{r_j^{2s}}\int_{\Gamma_j}\int_{B_j}v^2(x,t)dxdt.\notag
\end{align}
Without loss of generality, it may be assumed that $x_0=0$. 
Recalling \eqref{deltaspace_inf}, we have, for $x\in\supp\psi_j$ and $y\in\R^n\setminus B_j$, 
\[
\frac{1}{|x-y|} = \frac{1}{|y|}\frac{|y|}{|x-y|}\le \frac{1}{|y|}\frac{|x|+|x-y|}{|x-y|} \le \frac{1}{|y|}\left(1 + \frac{r}{\delta_j}\right)\le 2^{j+1}\frac{1}{|y|}. 
\]
Thus 
\begin{align}\label{J2}
J_2 &\le C2^{(n+2s)j}\int_{\R^n\setminus B_j}\frac{dy}{|y|^{n+2s}} \int_{\Gamma_{j}}\int_{B_j} \tilde u^{-p}(x,t)\psi^{p+2}(x,t)\eta(t)dxdt\\
&\le  C2^{(n+2s)j}r_j^{-2s}\int_{\Gamma_{j}}\int_{B_j} v^{2}(x,t)dxdt.\notag 
\end{align}
If $x\in \supp\psi_j\subset B_r$ and $y\in\R^n\setminus B_R$, then 
\[
\frac{1}{|x-y|}\le \frac{1}{|y|}\left(1+\frac{r}{R-r}\right)\le \frac{2}{|y|}. 
\]
Thus $J_3$ satisfies 
\begin{align*}
J_3 & \le \frac{C}{d}R^{-2s}\tail_\infty(u_-;x_0,R,t_0-r^{2s},t_0)\int_{\Gamma_{j}}\int_{B_j} v^{2}(x,t)dxdt.
\end{align*}
Due to our choice of $d$,
\begin{equation}\label{J3}
J_3 \le Cr^{-2s}\int_{\Gamma_{j}}\int_{B_j} v^{2}(x,t)dxdt.
\end{equation}
We finally estimate $J_4$ using the assumption \eqref{eqphi_inf}: 
\begin{align}\label{J4}
J_4 & \le C\frac{2^{2sj}}{r_j^{2s}}\int_{\Gamma_{j}}\int_{B_j} v^{2}(x,t)dxdt.
\end{align}
Using the estimates \eqref{J1}, \eqref{J2}, \eqref{J3} and \eqref{J4} for $J_1$, $J_2$, $J_3$ and $J_4$ in \eqref{I1I2}, we find,  
\begin{align}\label{I1I2final}
I_1 + I_2& \le C2^{(n+2s)j}r_j^{-2s}\int_{\Gamma_{j}}\int_{B_j} v^{2}(x,t)dxdt.
\end{align}
Recalling \eqref{sobolevUj+1}, \eqref{I1I2final} gives
\begin{align}
&\fint_{\Gamma_{j+1}}\fint_{B_{j+1}}|v|^{2\kappa}dxdt \\
&\quad\le r_{j}^{-2s}C
2^{(n+2s)j}\int_{\Gamma_{j}}\fint_{B_j} v^{2}dxdt\left(2^{(n+2s)j}r_j^{-2s}\int_{\Gamma_{j}}\fint_{B_j} v^{2}dxdt\right)^{\frac{2s}{n}}\notag\\
&\quad\le C\left(2^{(n+2s)j}\fint_{\Gamma_{j}}\fint_{B_j} v^{2}dxdt\right)^{\frac{n+2s}{n}}.\notag
\end{align} 
Since $\kappa = \frac{n+2s}{n}$, we have shown that, for any $p>0$,  
\begin{equation}\label{iter1}
\left(\fint_{U_{j+1}}\tilde u^{-p\kappa }\right)^{\frac{1}{p\kappa}}\le 
C^{\frac{1}{p\kappa}}\left(2^{(n+2s)j}\fint_{U_{j}}\tilde u^{-p }\right)^{\frac{1}{p}}. 
\end{equation}
Here $C=C_p$ is increasing in $p$ at a polynomial rate. Let 
\begin{equation*}
A_j = \left(\fint_{U_{j}}\tilde u^{-p_j }\right)^{\frac{1}{p_j}}, \quad p_j = \kappa^jp, \quad \alpha_j = C_{p_j}^{\frac{1}{p_{j+1}}}2^{\frac{(n+2s)j}{p_j}}.
\end{equation*}
Then by \eqref{iter1}, $A_{j+1}\le \alpha_jA_j$ and 
\begin{align*}
A_N \le A_0\prod_{j=0}^{N-1}\alpha_j. 
\end{align*}
It is easy to check that $\prod_{j=0}^{N-1}\alpha_j$ is bounded independently of $N$ by analyzing its logarithm. Hence we obtain 
\begin{align}\label{Urhalf}
\sup_{U(r/2)}\tilde u^{-1}\le \limsup_{N\to\infty}A_N\le C\left(\fint_{U_{(r)}}\tilde u^{-p }\right)^{\frac{1}{p}} 
\le C\left(\fint_{U_{(r)}} u^{-p }\right)^{\frac{1}{p}}. 
\end{align}
If $\theta\in(0,1)$, then 
\begin{align}\label{Uthetar}
\sup_{U(\theta r)}\tilde u^{-1}\le \frac{C}{(1-\theta)^{\frac{n+2s}{p}}}\left(\fint_{U_{(r)}}\tilde u^{-p }\right)^{\frac{1}{p}} 
\le \frac{C}{(1-\theta)^{\frac{n+2s}{p}}}\left(\fint_{U_{(r)}} u^{-p }\right)^{\frac{1}{p}}. 
\end{align}
This is clear if $\theta\le 1/2$. If $\theta>1/2$, choose $(z,\tau)\in U(\theta r)$ such that $U(z,\tau,(1-\theta)r)\subset U(r)$. 
Using \eqref{Urhalf} with $(1-\theta)r$ in place of $r$, we get 
\begin{align*}
\sup_{U(z,\tau,(1-\theta)r/2)}\tilde u^{-1}&\le \frac{C}{(1-\theta)^{\frac{n+2s}{p}}}\left(\frac{1}{|U(r)|}\int_{U_{z,\tau,(1-\theta)r)}}\tilde u^{-p }\right)^{\frac{1}{p}}\\
& \le \frac{C}{(1-\theta)^{\frac{n+2s}{p}}}\left(\fint_{U(r)}\tilde u^{-p }\right)^{\frac{1}{p}}. 
\end{align*}
By covering $U(\theta r)$ with a finite collection of sets $\{U(z_k,\tau_k,(1-\theta)r/2)\}_k$ of the above type, we obtain \eqref{Uthetar}. 

\end{proof}


The next result is a reverse H\"older inequality for supersolutions. 

\begin{lemma}\label{lemma_usmall}
Suppose that $u$ is a supersolution to \eqref{maineq} such that 
\[
u\ge 0\text{ in }B_R(x_0)\times(t_0,t_0+r^{2s}),
\]
where $r<R/2$ and $t_0\in(0,T-r^{2s})$. 
Let
\[
\tilde u = u + d,\quad\text{where }d \ge \left(\frac{r}{R}\right)^{2s}\tail_\infty(u_-;x_0,R,t_0,t_0+r^{2s}). 
\]
Then for any $\hat p\in(0,1)$ and $\theta\in[1/2,1)$, there exist constants $C = C(n,s,\hat p)\ge 1$ and $m=m(s,n)>0$ such that 
\begin{equation}
\fint_{U^+(x_0,t_0,\theta r)}\tilde u dxdt \le
 \left(\frac{C}{(1-\theta)^{m}}\right)^{(1/\hat p-1)}
\left(\fint_{U^+(x_0,t_0,r)}\tilde u^{\hat p}dxdt\right)^{\frac{1}{\hat p}}. 
\end{equation}
\end{lemma}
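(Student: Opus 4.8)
\emph{Strategy.} The plan is to run a Moser iteration on small positive powers of $\tilde u$, using the Caccioppoli inequality of Lemma~\ref{lemma_positivecacc} together with the parabolic Sobolev inequality of Theorem~\ref{parabsobolev}. By translation we may take $x_0=0$. Put $\kappa=\frac{n+2s}{n}$, which is admissible in Theorem~\ref{parabsobolev} and for which $\frac{2\kappa^{*}(\kappa-1)}{\kappa^{*}-1}=2$, so that the supremum factor there is exactly $\sup_t\fint|f|^{2}$. Fix $\hat p\in(0,1)$ and let $N=N(n,s,\hat p)$ be the least integer with $\kappa^{N}\hat p\ge1$; set $p_0=\hat p$ and $p_j=\kappa^{j}\hat p$ for $1\le j\le N-1$, all of which lie in $(0,1)$ — this is why the iteration must stop after finitely many steps, since Lemma~\ref{lemma_positivecacc} only applies to powers strictly below $1$. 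Choose radii $r=r_0>r_1>\dots>r_N=\theta r$ with gaps $\delta_j=r_j-r_{j+1}$, the forward cylinders $U_j=B_{r_j}(0)\times(t_0,t_0+r_j^{2s})$ (so $U_0=U^{+}(r)$, $U_N=U^{+}(\theta r)$), and cutoffs $\psi_j\in C_c^\infty(B_{r_j})$ with $\psi_j\equiv1$ on $B_{r_{j+1}}$ and $\mathrm{dist}(\supp\psi_j,\R^{n}\setminus B_{r_j})\ge\delta_j/2$, together with $\eta_j\in C^\infty(\R)$ with $\eta_j\equiv1$ on $(t_0,t_0+r_{j+1}^{2s})$ and $\eta_j\equiv0$ for $t\ge t_0+r_j^{2s}$, and the usual derivative bounds in terms of $\delta_j$.

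\emph{One step and iteration.} For fixed $j\le N-1$, apply Lemma~\ref{lemma_positivecacc} with $r=r_j$, the given $R$, exponent $p=p_j$, cutoffs $\psi_j,\eta_j$, and time parameters $\tau_1=t_0$, $\tau_2=t_0+r_{j+1}^{2s}$, $\lag=r_j^{2s}-r_{j+1}^{2s}$. The only term on the right of \eqref{cclemmapositive} that is not immediately controlled by the local $L^{p_j}$-mass of $\tilde u$ is the one carrying $\int_{\R^{n}\setminus B_R}u_-(y,t)|x-y|^{-n-2s}dy$; since $|x-y|\ge|y|/2$ for $x\in B_r\subset B_{R/2}$ and $y\notin B_R$, this factor is at most $CR^{-2s}\tail_\infty(u_-;0,R,t_0,t_0+r^{2s})$, and the hypothesis $d\ge(r/R)^{2s}\tail_\infty(u_-;0,R,t_0,t_0+r^{2s})$ makes $d^{-1}$ times it at most $Cr^{-2s}$ — this is the sole place where the global behaviour of $u$ enters. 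The remaining three right-hand terms of \eqref{cclemmapositive} are bounded by $C(n,s,\Lambda,\hat p)\,(r/\delta_j)^{\gamma}r^{-2s}\int_{U_j}\tilde u^{p_j}$ with a fixed exponent $\gamma=\gamma(s)=\max\{1,2s\}$ (the time cutoff contributing the power $1$, the spatial cutoff and the tail over $B_{r_j}$ the power $2s$), using the cutoff bounds and $|x-y|\gtrsim\delta_j$ on the relevant regions. Thus Lemma~\ref{lemma_positivecacc} controls both $\int_{t_0}^{t_0+r_{j+1}^{2s}}[\tilde u(\cdot,t)^{p_j/2}]_{H^{s}(B_{r_{j+1}})}^{2}dt$ (via the lower bound in \eqref{ellipticity}, since $\psi_j\equiv1$ on $B_{r_{j+1}}$) and $\sup_{t_0<t<t_0+r_{j+1}^{2s}}\int_{B_{r_{j+1}}}\tilde u^{p_j}dx$. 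Feeding these into Theorem~\ref{parabsobolev} on $(t_0,t_0+r_{j+1}^{2s})\times B_{r_{j+1}}$ with this $\kappa$ and normalising by the volumes of $U_j,U_{j+1}$ (the powers of $r$ cancel) gives the iterative inequality
\[
\Big(\fint_{U_{j+1}}\tilde u^{\kappa p_{j}}\Big)^{1/(\kappa p_{j})}\le C(n,s,\Lambda,\hat p)^{1/(\kappa p_{j})}\Big(\frac{r}{\delta_j}\Big)^{\gamma/p_j}\Big(\fint_{U_j}\tilde u^{p_j}\Big)^{1/p_j}.
\]
Since $\kappa p_j=p_{j+1}$ for $j\le N-2$, multiplying these over $j=0,\dots,N-1$ telescopes, and at the last step $\kappa p_{N-1}\ge1$, so by Jensen $\fint_{U_N}\tilde u\le(\fint_{U_N}\tilde u^{\kappa p_{N-1}})^{1/(\kappa p_{N-1})}$. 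This yields
\[
\fint_{U^{+}(\theta r)}\tilde u\le C(n,s,\Lambda,\hat p)\ \prod_{j=0}^{N-1}\Big(\frac{r}{\delta_j}\Big)^{\gamma/p_j}\ \Big(\fint_{U^{+}(r)}\tilde u^{\hat p}\Big)^{1/\hat p}.
\]

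\emph{The constant, and the main obstacle.} It remains to choose the gaps $\delta_j$ with $\sum_j\delta_j=(1-\theta)r$ and to cast $\prod_j(r/\delta_j)^{\gamma/p_j}$ in the stated form. Optimising over the $\delta_j$ (equivalently, taking them geometric) makes this product equal to $(1-\theta)^{-\gamma\sum_j 1/p_j}$ times a factor depending only on $n,s,\hat p$, and since $p_j=\kappa^{j}\hat p$ with $\kappa-1=2s/n$ the sum $\sum_{j=0}^{N-1}1/p_j=\hat p^{-1}\sum_{i=0}^{N-1}\kappa^{-i}$ is a geometric sum of size comparable to $1/\hat p$. Following these quantities carefully — or, alternatively, first proving the case $\theta=\tfrac12$ with a constant $C(n,s,\Lambda,\hat p)$ and then covering $U^{+}(\theta r)$ by a bounded‑overlap family of forward cylinders of radius $\sim(1-\theta)r$ contained in $U^{+}(r)$ and summing — produces a bound of the announced form $\big(C(1-\theta)^{-m}\big)^{1/\hat p-1}$ with $m=m(n,s)$. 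I expect this bookkeeping to be the only genuinely delicate point: one must track how the $\hat p$‑dependent number of steps $N$, the $\hat p$‑dependence of the constant in Lemma~\ref{lemma_positivecacc} (which degenerates as the exponent tends to $0$ or to $1$, and which forces $C$ to depend on $\hat p$), and the geometric factors combine into the exponent $1/\hat p-1$, while the iteration scheme itself is entirely routine once Lemma~\ref{lemma_positivecacc} and Theorem~\ref{parabsobolev} are available.
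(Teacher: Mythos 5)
Your proposal is correct and follows essentially the same route as the paper: a finite Moser iteration on exponents $p_j=\kappa^j p<1$ driven by Lemma \ref{lemma_positivecacc} (with the same choices $\tau_1=t_0$, $\tau_2=t_0+r_{j+1}^{2s}$, $\lag=r_j^{2s}-r_{j+1}^{2s}$), the parabolic Sobolev inequality with $\kappa=\tfrac{n+2s}{n}$, and the hypothesis on $d$ to absorb the exterior tail term. The only cosmetic difference is that you start at $\hat p$ and finish with Jensen once $\kappa p_{N-1}\ge 1$, whereas the paper starts at $p=\kappa^{-N}\le\hat p$ so that the iteration lands exactly on exponent $1$ and applies H\"older at the initial step; the bookkeeping for the constant is the same in both versions.
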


\begin{proof}
The proof is very similar to that of Lemma \ref{lemma_uminusMoser} and we only provide enough details to follow the main ideas. See also Theorem 3.7. in \cite{FelKass}. Let  
\[
r_0 = r,\quad r_j = r_j = r-(1-\theta)2^{-j}, \quad \delta_j = (1-\theta)2^{-j}r,\quad j=1,2,\ldots
\]
and 
\[
U_{j} = B_j\times\Gamma_j = B_{r_{j}}(x_0)\times(t_0,t_0+r_j^{2s}). 
\]
We choose nonnegative test functions $\psi_{j} \in C^\infty(B_j)$ and $\zeta_j \in C^{\infty}(\Gamma_j)$ satisfying 
	\begin{equation}\label{deltaspace}
	\psi_j\equiv 1\text{ in }B_{j+1},\;\text{dist}( \supp \psi_j,\R^n\setminus B_j) \ge \frac{\delta_j}{2},
	\end{equation}	
 such that for $\phi_j = \psi_j \zeta_j$ we have
	\begin{equation*}
		0 \leq \phi_j \leq 1, \quad \phi_j=1 \text{ in } U_{j+1},\quad \phi_j(x,t_0+r_j^{2s}) = 0,
	\end{equation*}
	and
	\begin{equation} \label{eqphi}
		|\nabla\phi_j| \leq \frac{C}{r}2^j = C\delta_j^{-1}, \quad \left| \frac{\partial \phi_j}{\partial t}\right| \leq \frac{C}{2s r^{2s}}2^{2sj} = C\delta_j^{-2s}.
	\end{equation}
For $p\in(0,1)$, let $v = \tilde u^{\frac{p}{2}}$. At this point the proof proceeds exactly as the proof of Lemma \ref{lemma_uminusMoser}: We use the parabolic Sobolev inequality, this time using Lemma \ref{lemma_positivecacc} with $r=r_j$, $\tau_1 = t_0$, $\tau_2=t_0+r_{j+1}^{2s}$ and $\lag = r_{j}^{2s} - r_{j+1}^{2s}$, to estimate $I_1 + I_2$. Completely analogously to the proof of Lemma \ref{lemma_positivecacc}, we obtain 
\[
I_1 + I_2 \le C2^{(n+2s)j}r_j^{-2s}\int_{\Gamma_j}\int_{B_j}v^2dxdt, 
\]
where $C = C(p)$ and 
\begin{equation}\label{rhrh}
\left(\fint_{U_{j+1}}|\tilde u|^{p\kappa}\right)^{\frac{1}{p\kappa}} \le 
C^{\frac{1}{p\kappa}}\left(\frac{2^{(n+2s)j}}{(1-\theta)^{n+2s}}\fint_{U_{j}}|\tilde u|^{p}\right)^{\frac{1}{p}}. 
\end{equation}
For $j$ such that $\kappa^jp < 1$, let 
\begin{equation}\label{Ajpj}
A_j = \left(\fint_{U_{j}}\tilde u^{p_j }\right)^{\frac{1}{p_j}}, \quad p_j = \kappa^jp, \quad \alpha_j = C_{p_j}^{\frac{1}{p_{j+1}}}\frac{2^{\frac{(n+2s)j}{p_j}}}{(1-\theta)^{\frac{n+2s}{p_j}}}.
\end{equation}
Choose $N$ such that $\kappa^{N-1}p<1$. Then if $0\le j\le N-1$, we have $p\le p_j\le \kappa^{N-1}p$. Thus by Lemma \ref{lemma_positivecacc}, the constant $C_{p_j}$ depends on $p$ and $\kappa^{N-1}p$ only.   
From the construction in \eqref{Ajpj}, we obtain from \eqref{rhrh} that  $A_{j+1}\le \alpha_jA_j$ and, if $\kappa^{N-1}p<1$,  
\begin{align}
A_N \le A_0\prod_{j=0}^{N-1}\alpha_j\le C_1^{\frac{1}{p}}\frac{1}{(1-\theta)^{m_0(\kappa^N-1)}}A_0,  
\end{align}
for some $C_1 = C_1(n,s,\Lambda,p,\kappa^{N-1}p)$ and $m_0= m(n,s)$. 
Choosing $N$ so that $\kappa^{-N}\le \hat p\le \kappa^{-N+1}$ and setting $p=\kappa^{-N}$, we get 
\begin{align}\label{rhpos12}
\fint_{U^+(\theta r)}\tilde u dxdt& \le A_N\le  \frac{C_1^{\kappa^N}}{(1-\theta)^{m(1/\hat p-1)}}
\left(\fint_{U^+(r)}\tilde u^{\kappa^{-N}}dxdt\right)^{\frac{1}{\kappa^{-N}}}\\
& \le \frac{C_1^{\kappa^N}}{(1-\theta)^{m(1/\hat p-1)}}
\left(\fint_{U^+(r)}\tilde u^{\hat p}dxdt\right)^{\frac{1}{\hat p}} = \notag\\
&\le \frac{C^{1/\hat p-1}}{(1-\theta)^{m(1/\hat p-1)}}
\left(\fint_{U^+(r)}\tilde u^{\hat p}dxdt\right)^{\frac{1}{\hat p}},  
\end{align}
where $C = C(n,s,\Lambda,\hat p,\kappa^{-1}) = C(n,s,\Lambda,\hat p)$ and $m=m(n,s)$. 


\end{proof}

\subsection{Logarithmic estimates}

Here we prove logarithmic estimates for supersolutions. Together with Lemma 
\ref{lemma_uminusMoser} and Lemma \ref{lemma_usmall}, these estimates enables us to use an abstract lemma (Lemma \ref{lemma_SC} at the end of this section) proved by Moser in \cite{Moser2}. The proofs follow closely those of Lemma 4.1. and Proposition 4.2. in \cite{FelKass}, though additional care is required here to handle the negative parts of the supersolutions. 

\begin{lemma}\label{lemma_logest}
For $0<r<R/2$, let $B_r$ and $B_R$ be concentric balls in $\R^n$. Assume that $u\ge 0$ in $B_R\times(t_1,t_2)$. 
Let $\psi\in C_c^\infty(B_r)$ be a nonnegative function such that 
$\psi\le 1$ and $|\nabla \psi|\le Cr^{-1}$.  
Then 
\begin{align*}
&\int_{t_1}^{t_2}\E(\tilde u,-\psi^2\tilde u^{-1},t)dt \\
& \ge \int_{t_1}^{t_2}\int_{B_r}\int_{B_r}\psi(x)\psi(y)\left|\log\frac{\tilde u(x,t)}{\psi(x)} - \log\frac{\tilde u(y,t)}{\psi(y)}\right|^2d\mu(x,y,t)dt\\
& - Cr^{n-2s}(t_2-t_1) - \frac{2\Lambda}{d}\int_{t_1}^{t_2}\int_{\R^n\setminus B_R}\int_{B_r}\frac{u_-(y,t)}{|x-y|^{n+2s}}dxdydt.
\end{align*}

$\tilde u = u+d$. 

\end{lemma}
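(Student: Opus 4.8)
The plan is to fix $t$, expand $\E(\tilde u,-\psi^2\tilde u^{-1},t)$, and split the double integral according to the location of the two variables. Since $\psi\in C_c^\infty(B_r)$, the factor $\phi:=-\psi^2\tilde u^{-1}$ is supported in $B_r$, so the piece with $x,y\in\R^n\setminus B_r$ vanishes, and by symmetry of $K$ the two remaining near--far pieces coincide. Writing $d\mu=K(x,y,t)\,dx\,dy$ this gives
\[
\E(\tilde u,-\psi^2\tilde u^{-1},t)=E_{\mathrm{loc}}(t)+2E_{\mathrm{nl}}(t),
\]
where
\[
E_{\mathrm{loc}}(t)=\int_{B_r}\int_{B_r}(\tilde u(x,t)-\tilde u(y,t))\left(\frac{\psi^2(y)}{\tilde u(y,t)}-\frac{\psi^2(x)}{\tilde u(x,t)}\right)d\mu
\]
and
\[
E_{\mathrm{nl}}(t)=\int_{B_r}\int_{\R^n\setminus B_r}\left(-\psi^2(x)+\psi^2(x)\frac{\tilde u(y,t)}{\tilde u(x,t)}\right)d\mu .
\]
Throughout one uses that $\tilde u=u+d\ge d>0$ on $B_r\subset B_R$ by the sign hypothesis on $u$; all manipulations are formal but justified as in Remarks \ref{rmk1}--\ref{rmk2}.

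For the local term I would invoke the elementary inequality
\[
(a-b)\left(\frac{g_2^2}{b}-\frac{g_1^2}{a}\right)\ge g_1g_2\left(\log\frac{a}{g_1}-\log\frac{b}{g_2}\right)^2-(g_1-g_2)^2,\qquad a,b>0,\ g_1,g_2\ge 0,
\]
with the convention that the first term on the right is $0$ when $g_1g_2=0$. Assuming $g_1,g_2>0$ and substituting $a=xg_1$, $b=yg_2$, this reduces to $t+t^{-1}-2\ge(\log t)^2$ for $t>0$, which follows from $t+t^{-1}-2=(\sqrt t-1/\sqrt t)^2$ and $\sinh\sigma\ge\sigma$ ($\sigma\ge 0$) applied with $\sigma=\tfrac12|\log t|$; the cases $g_1g_2=0$ are checked directly. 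Taking $a=\tilde u(x,t)$, $b=\tilde u(y,t)$, $g_1=\psi(x)$, $g_2=\psi(y)$ and integrating against $d\mu$ yields
\[
E_{\mathrm{loc}}(t)\ge\int_{B_r}\int_{B_r}\psi(x)\psi(y)\left|\log\frac{\tilde u(x,t)}{\psi(x)}-\log\frac{\tilde u(y,t)}{\psi(y)}\right|^2d\mu-\int_{B_r}\int_{B_r}|\psi(x)-\psi(y)|^2\,d\mu,
\]
and the last term is at most $\Lambda\int_{B_r}\int_{B_r}|\psi(x)-\psi(y)|^2|x-y|^{-n-2s}\,dx\,dy\le Cr^{-2}\int_{B_r}\int_{B_r}|x-y|^{2-n-2s}\,dx\,dy\le Cr^{n-2s}$, using $|\nabla\psi|\le Cr^{-1}$.

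For the nonlocal term, when $x\in B_r$ and $y\in\R^n\setminus B_r$ one has $\tilde u(y,t)=u(y,t)+d\ge-u_-(y,t)$ and $\tilde u(x,t)\ge d$, so $\psi^2(x)\tilde u(y,t)/\tilde u(x,t)\ge-\tfrac1d\psi^2(x)u_-(y,t)$; moreover $u_-(y,t)=0$ when $y\in B_R\setminus B_r$. Hence
\[
E_{\mathrm{nl}}(t)\ge-\Lambda\int_{B_r}\psi^2(x)\int_{\R^n\setminus B_r}\frac{dy}{|x-y|^{n+2s}}\,dx-\frac{\Lambda}{d}\int_{B_r}\psi^2(x)\int_{\R^n\setminus B_R}\frac{u_-(y,t)}{|x-y|^{n+2s}}\,dy\,dx .
\]
In the first integral, $\psi$ vanishing on $\R^n\setminus B_r$ together with $|\nabla\psi|\le Cr^{-1}$ gives $\psi(x)\le C\,\rho_x/r$ with $\rho_x=\text{dist}(x,\R^n\setminus B_r)$, while $\int_{\R^n\setminus B_r}|x-y|^{-n-2s}\,dy\le C\rho_x^{-2s}$; since $2-2s\ge0$, $\rho_x\le r$, and $\psi\le 1$, one obtains $\psi^2(x)\int_{\R^n\setminus B_r}|x-y|^{-n-2s}\,dy\le Cr^{-2s}$ and so a contribution bounded by $Cr^{n-2s}$. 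In the second integral one simply bounds $\psi^2\le 1$.

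Combining the three estimates gives, for a.e.\ $t\in(t_1,t_2)$,
\[
\E(\tilde u,-\psi^2\tilde u^{-1},t)\ge\int_{B_r}\int_{B_r}\psi(x)\psi(y)\left|\log\frac{\tilde u(x,t)}{\psi(x)}-\log\frac{\tilde u(y,t)}{\psi(y)}\right|^2d\mu-Cr^{n-2s}-\frac{2\Lambda}{d}\int_{\R^n\setminus B_R}\int_{B_r}\frac{u_-(y,t)}{|x-y|^{n+2s}}\,dx\,dy ,
\]
and integrating in $t$ over $(t_1,t_2)$ yields the stated inequality. The main obstacle is bookkeeping: isolating the negative-part contribution of $u$ (which is where the factor $1/d$ enters, via $\tilde u(y,t)\ge-u_-(y,t)$ and $\tilde u(x,t)\ge d$) and establishing the boundary-layer decay $\psi(x)\le C\,\text{dist}(x,\partial B_r)/r$ needed to absorb the kernel singularity near $\partial B_r$ into the $Cr^{n-2s}$ term; the underlying algebraic inequality is routine.
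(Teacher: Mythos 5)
Your proof is correct and follows essentially the same route as the paper: the same near/far decomposition of $\E(\tilde u,-\psi^2\tilde u^{-1},t)$, the same pointwise algebraic inequality $(a-b)(g_2^2/b-g_1^2/a)\ge g_1g_2(\log(a/g_1)-\log(b/g_2))^2-(g_1-g_2)^2$ for the local part (the paper cites this from Lemma 4.1 of Felsinger--Kassmann, whereas you give a clean self-contained reduction to $t+t^{-1}-2\ge(\log t)^2$), and the same split of the far region into $B_R\setminus B_r$ versus $\R^n\setminus B_R$ with the bound $\tilde u(y)\ge -u_-(y)$ and $\tilde u(x)\ge d$. You additionally spell out the boundary-layer estimate $\psi(x)\lesssim r^{-1}\operatorname{dist}(x,\partial B_r)$ that absorbs the kernel singularity into $Cr^{n-2s}$, which the paper uses implicitly.
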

\begin{proof}
Due to the assumption on nonnegativity, we have 
\begin{align}\label{logI1I2}
&\int_{t_1}^{t_2}\E(\tilde u,-\psi^2\tilde u^{-1},t)dt\\
&\ge \int_{t_1}^{t_2}\int_{B_r}\int_{B_r}F(x,y,t)d\mu(x,y,t)dt\notag\\
& + 2\int_{t_1}^{t_2}\int_{\R^n\setminus B_r}\int_{B_r}(\tilde u(x,t)-\tilde u(y,t))\left(\frac{\psi^2(y)}{\tilde u(y,t)} - \frac{\psi^2(x)}{\tilde u(x,t)}\right)d\mu(x,y,t)dt\notag\\
&= I_1 + I_2, \notag
\end{align}
where 
\[
F(x,y,t) = \psi(x)\psi(y)\left(\frac{\psi(x)\tilde u(y,t)}{\psi(y)\tilde u(x,t)} + \frac{\psi(y)\tilde u(x,t)}{\psi(x)\tilde u(y,t)} - \frac{\psi(y)}{\psi(x)} - \frac{\psi(x)}{\psi(y)}\right). 
\]
Since $u\ge 0$ in $B_R$ and $\psi=0$ in $\R^n\setminus B_r$ we have 
\begin{align}\label{logI2}
I_2 & \ge - 2\int_{t_1}^{t_2}\int_{\R^n\setminus B_r}\int_{B_r}\psi^2(x)d\mu(x,y,t)dt\\
&-2\int_{t_1}^{t_2}\int_{\R^n\setminus B_r}\int_{B_r}\psi^2(x)\frac{\tilde u_-(y,t)}{v(x,t)}d\mu(x,y,t)dt\notag\\
&\ge -Cr^{n-2s}(t_2-t_1) - \frac{2\Lambda}{d}\int_{t_1}^{t_2}\int_{\R^n\setminus B_R}\int_{B_r}\frac{u_-(y,t)}{|x-y|^{n+2s}}dxdydt.\notag 
\end{align}
Arguing as in \cite{FelKass}, Lemma 4.1., it can be shown that 
\begin{align}\label{logI1}
I_1 & \ge \int_{t_1}^{t_2}\int_{B_r}\int_{B_r}\psi(x)\psi(y)\left|\log\frac{\tilde u(x,t)}{\psi(x)} - \log\frac{\tilde u(y,t)}{\psi(y)}\right|^2d\mu(x,y,t)dt\\
&-\int_{t_1}^{t_2}\int_{B_r}\int_{B_r}|\psi(x)-\psi(y)|^2d\mu(x,y,t)dt\notag\\
& \ge \int_{t_1}^{t_2}\int_{B_r}\int_{B_r}\psi(x)\psi(y)\left|\log\frac{\tilde u(x,t)}{\psi(x)} - \log\frac{\tilde u(y,t)}{\psi(y)}\right|^2d\mu(x,y,t)dt\notag\\
&\quad -Cr^{n-2s}(t_2-t_1). \notag
\end{align}
Here we used the fact that 
\begin{align*}
&\int_{t_1}^{t_2}\int_{B_r}\int_{B_r}|\psi(x)-\psi(y)|^2d\mu(x,y,t)dt\\
&\le \frac{C\Lambda}{r^2}\int_{t_1}^{t_2}\int_{B_r}\int_{B_r}|x-y|^{2-n-2s}dxdtdt\le Cr^{n-2s}(t_2-t_1). 
\end{align*}
Using \eqref{logI1} and \eqref{logI2} in \eqref{logI1I2}, we complete the proof. 
\end{proof}

With the aid of Lemma \ref{lemma_logest}, we derive estimates for the levelsets of the logarithm of a supersolution.  

\begin{lemma}\label{lemma_levelsets}
Let $x_0\in\R^n$, $r>0$ and $t_0\in (r^{2s},T-r^{2s})$. Suppose that $u$ is a supersolution to \eqref{maineq} such that 
\[
u\ge 0\text{ in }B_R(x_0)\times(t_0-r^{2s},t_0+r^{2s}), \quad 0<r<R/2.
\]
Let 
\[
\tilde u = u + d,\quad \text{where } d = \left(\frac{r}{R}\right)^{2s}\tail_\infty(u_-;x_0,R,t_0-r^{2s},t_0+r^{2s}). 
\]
Then there exists a constant $C = C(n,s,\Lambda)$ such that 
\begin{align}
&|U^+(x_0,t_0,r)\cap\{\log\tilde u<-\gamma-a\}| \le \frac{C|U^+(x_0,t_0,r)|}{\gamma}\le 
\frac{Cr^{n+2s}}{\gamma},\tag{i}\\
&|U^-(x_0,t_0,r)\cap\{\log\tilde u > \gamma-a\}| \le \frac{C|U^-(x_0,t_0,r)|}{\gamma}\le 
\frac{Cr^{n+2s}}{\gamma},\tag{ii}
\end{align}
where $a = a(\tilde u(\cdot,t_0))$. 
\end{lemma}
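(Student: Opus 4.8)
The plan is to follow Moser's iteration‑free argument for $\log\tilde u$, as carried out for globally positive supersolutions in \cite[Proposition~4.2]{FelKass}, the genuinely new point being the treatment of the nonlocal tail term produced by Lemma \ref{lemma_logest}. Write $v=-\log\tilde u$, fix a radially decreasing $\psi\in C_c^\infty(B_r)$ with $\psi\equiv 1$ on $B_{r/2}$, $0\le\psi\le 1$, $|\nabla\psi|\le C/r$, put $P=\int_{B_r}\psi^2\,dx\sim r^n$ and $\bar v(t)=P^{-1}\int_{B_r}\psi^2 v(x,t)\,dx$, and set $a:=\bar v(t_0)$ (which depends only on $\tilde u(\cdot,t_0)$) and $w:=v-a$, so that $\bar w(t):=\bar v(t)-a$ satisfies $\bar w(t_0)=0$. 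Since $\{\log\tilde u<-\gamma-a\}=\{w>\gamma\}$ and $\{\log\tilde u>\gamma-a\}=\{w<-\gamma\}$, statements (i) and (ii) amount to estimating $|\{w>\gamma\}\cap U^+|$ and $|\{w<-\gamma\}\cap U^-|$.

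First I would derive a differential inequality. For every subinterval $(\tau_1,\tau_2)\subset(t_0-r^{2s},t_0+r^{2s})$, test the supersolution inequality with the nonnegative function $\phi=\psi^2\tilde u^{-1}$ (admissible by Remark \ref{rmk1}) and integrate by parts in time; the boundary terms cancel because $\tilde u\phi=\psi^2$ does not depend on $t$, so the time integral equals $\int_{B_r}\psi^2\log\frac{\tilde u(\cdot,\tau_2)}{\tilde u(\cdot,\tau_1)}\,dx=P(\bar v(\tau_1)-\bar v(\tau_2))$. Estimating the nonlocal term via Lemma \ref{lemma_logest} (applied on $(\tau_1,\tau_2)$) and absorbing the contribution $\tfrac{2\Lambda}{d}\int_{\tau_1}^{\tau_2}\int_{\R^n\setminus B_R}\int_{B_r}\frac{u_-}{|x-y|^{n+2s}}$ into an error of size $Cr^{n-2s}(\tau_2-\tau_1)$ — here one uses $|x-y|\ge\tfrac12|y-x_0|$ for $x\in B_r$, $y\notin B_R$ (valid since $r<R/2$), the bound $\int_{\R^n\setminus B_R}\frac{u_-(y,t)\,dy}{|y-x_0|^{n+2s}}\le R^{-2s}\tail_\infty(u_-;x_0,R,t_0-r^{2s},t_0+r^{2s})$, and the choice $d=(r/R)^{2s}\tail_\infty(\dots)$ — and dividing by $P$, one obtains
\[
\bar w(\tau_2)-\bar w(\tau_1)+\frac1P\int_{\tau_1}^{\tau_2}E(t)\,dt\le Cr^{-2s}(\tau_2-\tau_1),
\]
where $E(t)\ge 0$ is the slice‑wise logarithmic energy $\int_{B_r}\int_{B_r}\psi(x)\psi(y)\bigl|\log\tfrac{\tilde u(x,t)}{\psi(x)}-\log\tfrac{\tilde u(y,t)}{\psi(y)}\bigr|^2\,d\mu$ from Lemma \ref{lemma_logest}. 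Taking $\tau_1=t_0$ gives $\bar w(t)\le C$ on $(t_0,t_0+r^{2s})$ and $\tfrac1P\int_{t_0}^{t}E\le(\bar w(t))_-+C$ there, while taking $\tau_2=t_0$ gives $\bar w(t)\ge -C$ on $(t_0-r^{2s},t_0)$ and $\tfrac1P\int_{t}^{t_0}E\le(\bar w(t))_++C$ there.

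Next, the passage from energy to level sets. Applying the weighted Poincaré inequality of Lemma \ref{DK} with the radially decreasing weight $\psi^2$ — using $\min\{\psi^2(x),\psi^2(y)\}\le\psi(x)\psi(y)$, the ellipticity \eqref{ellipticity}, and the direct bound $\int_{B_r}\int_{B_r}\psi(x)\psi(y)|\log\psi(x)-\log\psi(y)|^2\,d\mu\le Cr^{n-2s}$ to discard the $\log\psi$‑terms — yields $\int_{B_r}|v(x,t)-\bar v(t)|^2\psi^2(x)\,dx\le Cr^{2s}E(t)+Cr^n$ for a.e.\ $t$. Since $\psi\equiv 1$ on $B_{r/2}$, Chebyshev's inequality then gives, for $\gamma$ larger than a constant $C=C(n,s,\Lambda)$,
\[
|\{w(\cdot,t)>\gamma\}\cap B_{r/2}|\le\frac{C(r^{2s}E(t)+r^n)}{(\gamma-\bar w(t))^2},\qquad |\{w(\cdot,t)<-\gamma\}\cap B_{r/2}|\le\frac{C(r^{2s}E(t)+r^n)}{(\gamma+\bar w(t))^2},
\]
while for small $\gamma$ the claimed estimates are trivial because $|U^\pm(x_0,t_0,r)|\sim r^{n+2s}$. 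For (i) one cannot control $\int_{t_0}^{t_0+r^{2s}}E$ on its own, since a supersolution may grow (and $\bar w$ decrease) arbitrarily fast; instead I would use $(\gamma-\bar w(t))^2\gtrsim\gamma^2+(\bar w(t))_-^2$ together with $\tfrac1P\int_{t_0}^{t}E\le(\bar w(t))_-+C$, and bound $\int_{t_0}^{t_0+r^{2s}}\frac{E(t)\,dt}{\gamma^2+(\bar w(t))_-^2}$ by substituting $s=\int_{t_0}^{t}E(\sigma)\,d\sigma$; this one‑variable computation yields $CP/\gamma$, and the $Cr^n$‑term contributes $Cr^{n+2s}/\gamma^2$, so that $|\{w>\gamma\}\cap U^+(x_0,t_0,r/2)|\le Cr^{n+2s}/\gamma$. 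Part (ii) is obtained in exactly the same way with time reversed, using $(\gamma+\bar w(t))^2\gtrsim\gamma^2+(\bar w(t))_+^2$ and $\tfrac1P\int_{t}^{t_0}E\le(\bar w(t))_++C$. Finally one passes from the half‑radius cylinder to $U^\pm(x_0,t_0,r)$, either by running the whole argument with $\psi$ supported in a ball slightly larger than $B_r$ but still contained in $B_R$ (possible since $R>2r$), or by a standard covering (the normalizing constant $a$ may be chosen per cover element, since it is only required to depend on $\tilde u(\cdot,t_0)$).

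The main obstacle is the tail. Unlike in the globally positive case of \cite{FelKass}, Lemma \ref{lemma_logest} produces the extra contribution $\tfrac{2\Lambda}{d}\int_{\R^n\setminus B_R}\int_{B_r}\frac{u_-}{|x-y|^{n+2s}}$, and the whole scheme closes only if this is absorbed, uniformly over all subintervals $(\tau_1,\tau_2)$, into the error $Cr^{n-2s}(\tau_2-\tau_1)$; this is precisely what the choice $d=(r/R)^{2s}\tail_\infty(u_-;x_0,R,t_0-r^{2s},t_0+r^{2s})$ is engineered to achieve. A secondary, purely technical point is reconciling the $\psi^2$‑weighted spatial average — forced on us by the test function $\psi^2\tilde u^{-1}$ — with the Poincaré inequality; invoking Lemma \ref{DK} with weight $\psi^2$ rather than $\psi$, as above, avoids the usual loss of a concentric ball. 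Everything else proceeds as in \cite{FelKass}.
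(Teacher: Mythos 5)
Your proposal is correct and follows the same overall strategy as the paper: test with $\psi^2\tilde u^{-1}$, apply Lemma \ref{lemma_logest}, absorb the nonlocal term $\frac{2\Lambda}{d}\int\!\!\int\!\!\int\frac{u_-}{|x-y|^{n+2s}}$ into $Cr^{n-2s}(\tau_2-\tau_1)$ via the choice of $d$ (this is identical to \eqref{lvlset4}), pass through the weighted Poincar\'e inequality, and control the level sets of $\log\tilde u$ around the $\psi^2$-weighted spatial average at time $t_0$. The one step you execute differently is the final time integration: the paper discretizes $(t_0,t_2)$ into intervals on which $V$ oscillates by at most $1$, uses the monotonicity of $W=V-C_1t$, and telescopes $\frac{1}{\gamma+a-C_1t_0-W(\tau_i)}-\frac{1}{\gamma+a-C_1t_0-W(\tau_{i+1})}$ to get $1/\gamma$; you instead combine $(\gamma-\bar w)^2\gtrsim\gamma^2+(\bar w)_-^2$ with $\frac1P\int_{t_0}^tE\le(\bar w(t))_-+C$ and substitute $s=\int_{t_0}^tE$, evaluating $\int_0^\infty\frac{ds}{\gamma^2+(s-C)_+^2}\le C'/\gamma$. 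Both are standard renditions of Moser's argument and yield the same bound; yours avoids the continuity/discretization bookkeeping (and the drift correction $C_1t$), at the price of the one-variable calculus estimate. Two small remarks. First, your ``direct bound'' for the $\log\psi$ cross-terms does hold cleanly, since $ab(\log a-\log b)^2\le C|a-b|^2$ for $a,b\in(0,1]$ gives $\psi(x)\psi(y)|\log\psi(x)-\log\psi(y)|^2\le C|\psi(x)-\psi(y)|^2$, whence $\le Cr^{n-2s}$ after integration; the paper sidesteps this entirely by keeping $v=-\log(\tilde u/\psi)$ and choosing $\psi\equiv1$ on $B_r$ with $\supp\psi\subset B_{3r/2}$, which is also your preferred option (a) for upgrading from $B_{r/2}$ to $B_r$ and is the one you should use. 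Second, your fallback option (b), a covering argument with the constant $a$ ``chosen per cover element,'' would not work if the cover shifts the base time, since the lemma requires a single $a$ depending only on $\tilde u(\cdot,t_0)$ and valid simultaneously for (i) and (ii); but since only the spatial localization is at issue, option (a) resolves it and this caveat is moot.
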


\begin{proof}
We first prove (i). It may be assumed that $x_0=0$. 
Let $\psi(x)\in C_c^\infty(B_{3r/2})$ be a non negative function such that $\psi\equiv 1$ in $B_r$. We additionally assume that there exists a monotone nonincreasing function $\Psi$ such that $\psi(x) = \Psi(|x-x_0|)$. Set $\tilde u = u+d$ and $\phi(x,t) = \frac{\psi^2(x)}{\tilde u(x,t)}$. Let 
\[
t_1 = t_0-r^{2s},\quad t_2 = t_0 + r^{2s}. 
\]
We use $\phi$ as test function and obtain, with $v(x,t) = -\log\frac{\tilde u(x,t)}{\psi(x)}$, 
\begin{align*}
&\int_{t_0}^{\tau}\int_{B_{3r/2}}\psi^2(x)\partial_tv(x,t)dxdt + \int_{t_0}^{\tau}\E(\tilde u,-\psi^2\tilde u^{-1},t)dt \le 0,  
\end{align*}
for any $\tau\in (t_0,t_2)$. 
From Lemma \ref{lemma_logest} we get 
\begin{align}\label{lvlset1}
&\int_{t_0}^{\tau}\int_{B_{3r/2}}\psi^2(x)\partial_tv(x,t)dx\\
&+ \int_{t_0}^{\tau}\int_{B_{3r/2}}\int_{B_{3r/2}}\psi(x)\psi(y)\left|v(x,t) - v(y,t)\right|^2d\mu(x,y,t)\notag\\
& \le Cr^{n-2s}(\tau-t_0) + \frac{2\Lambda}{d}\int_{t_0}^{\tau}\int_{\R^n\setminus B_R}\int_{B_{3r/2}}\frac{u_-(y,t)}{|x-y|^{n+2s}}dxdydt.\notag
\end{align}
Let 
\[
V(t) = \frac{\int_{B_{3r/2}}v(x,t)\psi^2(x)dx}{\int_{B_{3r/2}}\psi^2(x)dx}.
\]
Then an application of the weighted Poincar\'e inequality in Lemma \ref{DK},  to the second term on the left hand side in \eqref{lvlset1}, yields
\begin{align}\label{lvlset4}
& \int_{t_0}^{\tau}\int_{B_{3r/2}}\psi^2(x)\partial_tv(x,t)dxdt\\
& + cr^{-2s}\int_{t_0}^{\tau}\int_{B_{3r/2}}\psi^2(x)\left|v(x,t) - V(t)\right|^2dxdt \notag\\
&\le Cr^{n-2s}(\tau-t_0) + \frac{2\Lambda}{d}\int_{t_0}^{\tau}\int_{\R^n\setminus B_R}\int_{B_{3r/2}}\frac{u_-(y,t)}{|x-y|^{n+2s}}dxdydt \notag\\
& \le Cr^{n-2s}(\tau-t_0) + \frac{C\Lambda}{dR^{2s}}\int_{t_0}^{\tau}\int_{B_{3r/2}}R^{2s}\int_{\R^n\setminus B_R}\frac{u_-(y,t)}{|y|^{n+2s}}dydxdt \notag\\
& \le Cr^{n-2s}(\tau-t_0)\left(1 + \frac{1}{d}\left(\frac{r}{R}\right)^{2s}\tail_\infty(u_-;0,R,t_0,t_2)\right) \notag\\
&\le  Cr^{n-2s}(\tau-t_0). \notag
\end{align}
Since $\psi \equiv 1$ in $B_r$ and $\int_{B_{3r/2}}\psi^2(x)dx\approx r^n$, we obtain after dividing through with $\int_{B_{3r/2}}\psi^2(x)dx$ in \eqref{lvlset4}, 
\begin{align}\label{lvlset5}
& V(\tau)-V(t_0) + cr^{-2s}\int_{t_0}^{\tau}\fint_{B_{r}}\left|v(x,t) - V(t)\right|^2dx\\
& \le Cr^{-2s}(\tau-t_0) \notag
\end{align}
By Remark \ref{rmk1}, we may assume $V$ to be continuous on $[t_1,t_2]$
Choose $\delta>0$ such that if $t_0\le \theta_1\le \theta_2 \le t_2$, and $\theta_2-\theta_1\le \delta$, then 
\[
|V(\theta_2)-V(\theta_1)|^2\le 1. 
\]
Let $\tau = t_0+\delta$. Then  
\begin{align}\label{lvlset6}
|v(x,t)-V(\tau)|^2\le 2|v(x,t)-V(t)| + \e, 
\end{align}
for all $t\in (t_0,\tau)$. Hence, using \eqref{lvlset6} in \eqref{lvlset5},
\begin{align*}
& V(\tau)-V(t_0) + cr^{-2s}\int_{t_0}^{\tau}\fint_{B_{r}}\left|v(x,t) - V(\tau)\right|^2dx\\
& \le C_1r^{-2s}(\tau-t_0).
\end{align*}
We now set 
\begin{equation*}
w(x,t) = v(x,t)-C_1t,\quad W(t) = V(t) - C_1t,\quad\text{and }a=V(t_0).  
\end{equation*}
Then 
\begin{align}
& W(\tau)-W(t_0) + cr^{-2s}\int_{t_0}^{\tau}\fint_{B_{r}}\left|w(x,t) - W(\tau) + C_1(\tau-t)\right|^2dx\le 0, \label{eqtaut}
\end{align}
so that $W$ is nonincreasing on $[t_0,\tau]$. Let 
\[
L_\gamma(t) = \{x\in B_r: w(x,t)>\gamma+a-C_1t_0\}. 
\]
Then it follows from \eqref{eqtaut} that
\begin{align*}
& W(\tau)-W(t_0) + cr^{-2s}\int_{t_0}^{\tau}r^{-n}\int_{L_\gamma(t)}\left|w(x,t) - W(\tau) + C_1(\tau-t)\right|^2dx\le 0.  
\end{align*}
Additionally, when $x\in L_\gamma(t)$, 
\begin{equation}\label{ges}
w(x,t)-W(\tau)>\gamma+a-C_1t_0-W(\tau) \ge \gamma+a-C_1t_0-W(t_0) = \gamma>0.  
\end{equation}
Thus we find 
\begin{align*}
& W(\tau)-W(t_0) + cr^{-2s}\int_{t_0}^{\tau}r^{-n}\int_{L_\gamma(t)}\left|w(x,t) - W(\tau)\right|^2dx\le 0,   
\end{align*}
which yields 
\begin{align}\label{W12}
&\frac{W(\tau)-W(t_0)}{(\gamma+a-C_1t_0-W(\tau))^2} + cr^{-2s}\frac{\int_{t_0}^{\tau}|L_\gamma(t)|dt}{|B_r|}\le 0. 
\end{align}
Using that $W(\tau)<W(t_0)$ we deduce from \eqref{W12} that 
\begin{align*}
cr^{-2s}\frac{\int_{t_0}^{\tau}|L_\gamma(t)|dt}{|B_r|}&\le 
\frac{W(t_0) - W(\tau)}{(\gamma+a-C_1t_0-W(\tau))(\gamma+a-C_1t_0-W(t_0))} \\
& = \frac{1}{\gamma+a-C_1t_0-W(t_0)} - \frac{1}{\gamma+a-C_1t_0-W(\tau)}. 
\end{align*}
We decompose the interval $(t_0,t_2)$ as $\cup_i(\tau_i,\tau_{i+1})$, where 
\[\tau_1=t_0,\; \tau_N = t_2,\; N=[\delta^{-1}(t_2-t_0)]\; \text{and } \tau_{i+1}=\tau_{i}+\delta \text{ for }
i=1,\ldots, N-2.  
\]
For each interval $(\tau_i,\tau_{i+1})$ we get
\begin{align*}
&cr^{-2s}\frac{\int_{\tau_i}^{\tau_{i+1}}|L_\gamma(t)|dt}{|B_r|} 
\le\frac{1}{\gamma+a-C_1t_0-W(\tau_i)} - \frac{1}{\gamma+a-C_1t_0-W(\tau_{i+1})}. 
\end{align*}
Thus the sum $\sum_{i=1}^{M-1}\int_{\tau_i}^{\tau_{i+1}}|L_\gamma(t)|dt$ telescopes and we obtain, using \eqref{ges},  
\begin{align}\label{intLs}
&cr^{-2s}\frac{\int_{t_0}^{t_2}|L_\gamma(t)|dt}{|B_r|}
\le \frac{1}{\gamma+a-C_1t_0-W(t_0)} - \frac{1}{\gamma+a-C_1t_0-W(t_2)}\le \frac{1}{\gamma}. 
\end{align}
From \eqref{intLs} we deduce 
\begin{align*}
&|B_r\times(t_0,t_2)\cap\{w>\gamma+a-C_1t_0\}|\le \frac{Cr^{n+2s}}{\gamma}. 
\end{align*}
Going back to $\tilde u$, we find that 
\begin{align*}
&|\{B_r\times(t_0,t_2)\cap\{\log\tilde u < -\gamma-a\}\}| \\
&\le |\{B_r\times(t_0,t_2)\cap\{\log\tilde u +C_1(t-t_0)<-\gamma/2-a\}\}|\\
&\quad+|\{B_r\times(t_0,t_2)\cap \{C_1(t-t_0)>\gamma/2\}|\\
&=|\{B_r\times(t_0,t_2)\cap\{w > \gamma/2+a-C_1t_0\}\}|\\
&\quad+|\{B_r\times(t_0,t_2)\cap \{C_1(t-t_0)>\gamma/2\}|\\
&\quad\le \frac{Cr^{n+2s}}{\gamma} + r^{n+2s}\left(1-\frac{\gamma}{2C_1r^{2s}}\right)\le \frac{Cr^{n+2s}}{\gamma}. 
\end{align*}
This completes the proof of (i). To prove (ii), we proceed analogously, but initially integrate from $\tau\in(t_1,t_0)$ up to $t_0$. In this case we define $L_\gamma(t)$ in terms of the inequality $w<a-\gamma-C_1t_0$.

\end{proof}

The lemma below can be found in \cite{SalCos}, Section 2.2.3. It enables us to prove the weak Harnack inequality using the previous results of this section.  

\begin{lemma}\label{lemma_SC}
Let $\{U(\theta r)\}_{1/2\le \theta\le 1}$ be a family of non decreasing domains in $\R^{n+1}$. Let $m,C_0$ be positive constants, let $\sigma\in(0,1)$ and let $p_0\in(0,\infty]$. Suppose that $w$ is a non negative function satisfying 
\begin{equation*}
|U(r)\cap \{\log w>\gamma\}| \le \frac{C_0}{\gamma}|U(r)|
\end{equation*}
and 
\begin{equation*}
\left(\fint_{U(\theta r)}w^{p_0}dxdt\right)^{\frac{1}{p_0}}
\le \left(\frac{C_0}{(1-\theta)^m}\right)^{\frac{1}{p}-\frac{1}{p_0}}
\left(\fint_{U(r)}w^p\right)^{\frac{1}{p}}, 
\end{equation*}
for all $p\in (0,\min\{1,\sigma p_0\})$. 
Then there exists a constant $C = C(\sigma,\theta,m,C_0,p_0)$ such that 
\begin{equation*}
\left(\fint_{U(\theta r)}w^{p_0}dxdt\right)^{\frac{1}{p_0}}\le C. 
\end{equation*}
\end{lemma}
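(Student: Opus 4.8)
The statement is the abstract iteration lemma of Bombieri and Giusti, in the form recorded in \cite{SalCos}, Section 2.2.3; the plan is to assemble the three ingredients behind it, spelling out the first two and borrowing the third.

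\emph{Step 1: an interpolation consequence of the level-set bound.} The point is that the hypothesis on the distribution function of $\log w$ lets one replace the $L^{p_0}$-norm of $w$ on $U(\sigma r)$ by an $L^{p}$-norm, at the cost of an additive constant. For $0<p<\min\{1,\sigma p_0\}$, $\tfrac12\le\sigma\le1$ and any $\Theta>e$, one splits $U(\sigma r)$ into $\{w\le\Theta\}$ and $\{w>\Theta\}$; applying H\"older on the second piece and using
\[
|U(\sigma r)\cap\{w>\Theta\}|\le|U(r)\cap\{\log w>\log\Theta\}|\le\frac{C_0}{\log\Theta}|U(r)|\le\frac{c_1C_0}{\log\Theta}|U(\sigma r)|,
\]
where $c_1$ is the purely dimensional constant in the comparison $|U(r)|\le c_1|U(\sigma r)|$ for the cylinders at hand, and then taking $p$-th roots (using $p\le1$), one gets
\[
\left(\fint_{U(\sigma r)}w^{p}\right)^{1/p}\le\Theta+\left(\frac{c_1C_0}{\log\Theta}\right)^{\frac1p-\frac1{p_0}}\left(\fint_{U(\sigma r)}w^{p_0}\right)^{1/p_0},
\]
with the obvious reading when $p_0=\infty$.

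\emph{Step 2: feeding this into the reverse H\"older inequality.} Applying the second hypothesis to the rescaled subfamilies $\{U(\theta'\rho)\}_{1/2\le\theta'\le1}$, $\tfrac r2\le\rho\le r$, turns it into its two-radius form: for $\tfrac12\le\sigma'<\sigma\le1$ and $p$ in the above range,
\[
\Phi(\sigma')\le\left(\frac{C_0}{(\sigma-\sigma')^{m}}\right)^{\frac1p-\frac1{p_0}}\left(\fint_{U(\sigma r)}w^{p}\right)^{1/p},\qquad\Phi(\sigma):=\left(\fint_{U(\sigma r)}w^{p_0}\right)^{1/p_0}.
\]
Here $\Phi$ is finite on $[\tfrac12,1]$ by the same volume comparison once $\Phi(1)<\infty$, which is the case in all the applications we need (in the worst case $w$ is bounded). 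Combining with Step 1,
\[
\Phi(\sigma')\le\left(\frac{C_0}{(\sigma-\sigma')^{m}}\right)^{\frac1p-\frac1{p_0}}\Theta+\left(\frac{c_1C_0^{2}}{(\sigma-\sigma')^{m}\log\Theta}\right)^{\frac1p-\frac1{p_0}}\Phi(\sigma).
\]

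\emph{Step 3: iteration, and the main difficulty.} One would then run a Moser-type iteration over a nested family of cylinders with radii increasing from $\theta r$ towards $r$: at each stage the auxiliary exponent $p$ and threshold $\Theta$ are chosen so that the coefficient multiplying $\Phi$ in the last display is at most $\tfrac12$, that term is absorbed into the left-hand side, and the resulting geometric series is summed, yielding $\Phi(\theta)\le C(\sigma,\theta,m,C_0,p_0)$. The delicate part — and the step I expect to be the real obstacle — is the mutual calibration of the three sequences (radii, exponents, thresholds): the additive term produced by $\Theta$ grows quickly as the radius-increments shrink, so these sequences must be balanced against one another carefully in order for the iteration to close with a bound independent of $w$. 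This calibration is precisely the content of the lemma as proved in \cite{SalCos}, and I would simply invoke it there rather than reproduce the (standard but somewhat technical) argument.
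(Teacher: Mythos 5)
The paper offers no proof of this lemma at all—it simply cites \cite{SalCos}, Section 2.2.3—and your proposal ultimately does the same, deferring the delicate calibration of radii, exponents and thresholds to that reference, so you are taking essentially the same route. Your additional sketch of the two preliminary reductions (the truncation/interpolation step from the level-set bound on $\log w$, and the two-radius form of the reverse H\"older inequality) is correct modulo harmless constants such as the factor $2^{1/p}$ from taking $p$-th roots of a sum with $p\le 1$.
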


\subsection*{Proof of Theorem \ref{weakHarnack}}

\begin{proof}
Assume $u\ge 0$ in $B_R\times(t_0-r^{2s},t_0+r^{2s})$ and set 
\[
d = \left(\frac{r}{R}\right)^{2s}\tail_\infty(u_-;x_0,t_0-r^{2s},t_0+r^{2s}),\quad \tilde u = u + d. 
\]
Let 
\begin{align*}
&U_1(\theta r) = B_{\theta r}\times(t_0+r^{2s}-(\theta r)^{2s},t_0+r^{2s}) = 
U^-(x_0,t_0+r^{2s},\theta r),\\ 
&U_2(\theta r) = B_{\theta r}\times(t_0-r^{2s},t_0-r^{2s} + (\theta r)^{2s}) = U^+(x_0,t_0-r^{2s},\theta r). 
\end{align*}
We note that $U_1(r) = U^+(x_0,t_0,r)$ and $U_2(r) = U^-(x_0,t_0,r)$. 
Let $a = a(\tilde u(\cdot,t_0))$ be the constant in Lemma \ref{lemma_levelsets} and set $w_1 = e^{-a}\tilde u^{-1}$, $w_2 = e^{a}\tilde u$. Then by Lemma \ref{lemma_levelsets}, 
\begin{equation}\label{loglvl}
|U_i(r)\cap \{\log w_i>\gamma\}|\le \frac{C|U_i(r)|}{\gamma},\quad i = 1,2.
\end{equation}
From Lemma \ref{lemma_uminusMoser} we obtain, for any $p>0$,  
\begin{align}\label{supw1}
\sup_{U_1(\theta r)}w_1 & = e^{-a}\sup_{U_1(\theta r)}\tilde u^{-1}\le \frac{Ce^{-a}}{(1-\theta)^{\frac{n+2s}{p}}}\left(\fint_{U_1{(r)}}\tilde u^{-p }\right)^{\frac{1}{p}}\\
& = \frac{C}{(1-\theta)^{\frac{n+2s}{p}}}\left(\fint_{U_1{(r)}}w_1^p\right)^{\frac{1}{p}}.\notag 
\end{align}
An application of Lemma \ref{lemma_usmall} gives, for any $\hat p\in(0,1)$, 
\begin{align}\label{meanw2}
\fint_{U_2(\theta r)}w_2 dxdt & =e^a\fint_{U_2(\theta r)}\tilde u dxdt
\le
 \frac{Ce^a}{(1-\theta)^{m(1/\hat p-1)}}
\left(\fint_{U_2(r)}\tilde u^{\hat p}dxdt\right)^{\frac{1}{\hat p}}\\
& = \frac{C}{(1-\theta)^{m(1/\hat p-1)}}
\left(\fint_{U_2(r)}w_2^{\hat p}dxdt\right)^{\frac{1}{\hat p}}. \notag
\end{align} 
With \eqref{loglvl} and \eqref{supw1} at hand, we apply Lemma \ref{lemma_SC} to $w = w_1$ with $p_0 = \infty$ and any $\sigma\in (0,1)$, to find 
\begin{equation}\label{w_1}
\sup_{U_1(\theta_1 r)}w_1 \le C_1(\theta_1),\quad \frac12\le \theta_1<1. 
\end{equation}
Setting $w=w_2$, $p_0=1$ and again any $\sigma\in(0,1)$, we get, again from Lemma \ref{lemma_SC},  
\begin{equation}\label{w_2}
\fint_{U_2(\theta_2 r)}w_2dxdt \le C_2(\theta_2), \quad\frac12\le \theta_2<1. 
\end{equation}
Let $r_i=\theta_ir$, $i=1,2$. From \eqref{w_1} and \eqref{w_2} we find 
\begin{align*}
e^{a}\fint_{U_2(r_2)}udxdt& \le \fint_{U_2(r_2)}w_2dxdt \le C_1\\
& \le \frac{C_1C_2}{\sup_{U_1(r_1)}w_1} = C_1C_2e^a\left(\inf_{U_1(r_1)}u + d\right). 
\end{align*}
Thus we arrive at the weak Harnack inequality
\begin{align}\label{whr1r2}
&\fint_{B_{r_2}\times(t_0-r^{2s},t_0-r^{2s}+r_2^{2s})}udxdt
\le C\inf_{B_{r_1}\times(t_0+r^{2s}-r_1^{2s},t_0+r^{2s})}u \\
&\qquad + C\left(\frac{r}{R}\right)^{2s}\tail_\infty(u_-;x_0,R,t_0-r^{2s},t_0+r^{2s}).\notag
\end{align}
Let $r_1=r_2=\rho$ in \eqref{whr1r2} and choose $\rho$ so that $r^{2s} = 2\rho^{2s}$. This leads to the desired inequality  
\begin{align*}
&\fint_{B_\rho\times(t_0-2\rho^{2s},t_0-\rho^{2s})}udxdt\\
&\le C\left(\inf_{B_\rho\times(t_0+\rho^{2s},t_0+2\rho^{2s})}u +  \left(\frac{\rho}{R}\right)^{2s}\tail_\infty(u_-;x_0,R,t_0-2\rho^{2s},t_0+2\rho^{2s})\right).
\end{align*}

\end{proof}

\section{Local boundedness}\label{sec_localbound}

We start with two classical technical lemmas that are needed for the proof. 

\begin{lemma}[see Lemma 4.3 in \cite{HanLin}]\label{lemma_HL}
Let $f(\theta)$ be a non negative bounded function on $[1/2,1]$. Suppose there exist nonnegative constants $C_1,C_2,\alpha,\beta$ where $\beta<1$, such that for any $1/2\le \theta<\sigma\le 1$, there holds
\[
f(\theta) \le C_1(\sigma-\theta)^{-\alpha} + C_2 + \beta f(\sigma). 
\]
Then there exists a constant $C$ depending only on $\alpha$ and $\beta$ such that 
\[
f(\theta) \le C\left(C_1(\sigma-\theta)^{-\alpha}+C_2\right). 
\]
\end{lemma}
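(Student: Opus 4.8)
The plan is to run the standard ``hole-filling'' iteration. First I would dispose of the degenerate case $\alpha=0$: there the term $C_1(\sigma-\theta)^{-\alpha}$ is just $C_1$, which can be absorbed into $C_2$, so I may assume $\alpha>0$. I may also assume $\theta<\sigma$, since for $\theta=\sigma$ the asserted inequality is vacuous. Now fix $1/2\le\theta<\sigma\le1$ and choose a parameter $\tau\in(0,1)$ with $\tau^{\alpha}>\beta$; this is possible because $0\le\beta<1$, and the value of $\tau$ can be taken to depend only on $\alpha$ and $\beta$. Define the increasing sequence $\theta_0=\theta$ and $\theta_{i+1}=\theta_i+(1-\tau)\tau^i(\sigma-\theta)$. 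Since $\sum_{i\ge0}(1-\tau)\tau^i=1$ we have $\theta_i\uparrow\sigma$; in particular every $\theta_i$ lies in $[\theta,\sigma]\subset[1/2,1]$, and $\theta_{i+1}-\theta_i=(1-\tau)\tau^i(\sigma-\theta)$.

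Next I would apply the hypothesis to each consecutive pair $(\theta_i,\theta_{i+1})$, which gives
\[
f(\theta_i)\le C_1(1-\tau)^{-\alpha}\tau^{-\alpha i}(\sigma-\theta)^{-\alpha}+C_2+\beta f(\theta_{i+1}).
\]
Iterating this from $i=0$ to $i=k-1$ yields
\[
f(\theta)\le\beta^k f(\theta_k)+C_1(1-\tau)^{-\alpha}(\sigma-\theta)^{-\alpha}\sum_{i=0}^{k-1}(\beta\tau^{-\alpha})^i+C_2\sum_{i=0}^{k-1}\beta^i.
\]
Because $f$ is bounded on $[1/2,1]$ and $0\le\beta<1$, the remainder $\beta^k f(\theta_k)$ tends to $0$ as $k\to\infty$; because $\beta\tau^{-\alpha}<1$ and $\beta<1$, the two geometric series converge. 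Letting $k\to\infty$ gives
\[
f(\theta)\le\frac{C_1(1-\tau)^{-\alpha}}{1-\beta\tau^{-\alpha}}(\sigma-\theta)^{-\alpha}+\frac{C_2}{1-\beta},
\]
and since $\tau=\tau(\alpha,\beta)$, the constant $C=\max\{(1-\tau)^{-\alpha}(1-\beta\tau^{-\alpha})^{-1},\,(1-\beta)^{-1}\}$ depends only on $\alpha$ and $\beta$, which is the assertion.

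The only genuinely delicate point is the choice of $\tau$, which must simultaneously satisfy $\tau\in(0,1)$ and $\beta\tau^{-\alpha}<1$; and the boundedness hypothesis on $f$ is precisely what is needed to make the tail term $\beta^k f(\theta_k)$ vanish in the limit — without it the iteration would not close. Everything else is routine bookkeeping with convergent geometric series.
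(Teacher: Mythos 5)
Your proof is correct and is precisely the standard iteration argument behind the cited Lemma 4.3 of Han--Lin (the paper itself only references that source and gives no proof): the geometric interpolation $\theta_{i+1}-\theta_i=(1-\tau)\tau^i(\sigma-\theta)$ with $\tau^\alpha>\beta$, summation of the two geometric series, and the boundedness of $f$ to discard $\beta^k f(\theta_k)$. No gaps; the choice of $\tau=\tau(\alpha,\beta)$ and the treatment of $\alpha=0$ are both handled correctly.
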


\begin{lemma}[See Lemma 4.1 in \cite{DiB}]\label{lemma_iteration}
Let $\{Y_j\}_{j=0}^\infty$ be a sequence of real positive numbers satisfying 
\[
Y_{j+1} \le c_0b^jY_{j}^{1+\beta}, 
\]
for some constants $c_0>0$, $b>1$ and $\beta>0$. Then if $Y_0\le c_0^{-\frac{1}{\beta}}b^{-\frac{1}{\beta^2}}$, 
\[
\lim_{j\to\infty}Y_j=0. 
\]
\end{lemma}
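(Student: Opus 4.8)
The plan is to establish, by induction on $j$, the sharper quantitative bound
\[
Y_j \le Y_0\, b^{-j/\beta}\qquad\text{for every } j\ge 0,
\]
from which the assertion follows at once: since $b>1$ and $\beta>0$ we have $b^{-j/\beta}\to 0$ as $j\to\infty$, hence $Y_j\to 0$. The base case $j=0$ is immediate, as it reads $Y_0\le Y_0$.

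For the induction step I would suppose the bound holds at stage $j$ and substitute it into the hypothesis $Y_{j+1}\le c_0 b^j Y_j^{1+\beta}$, obtaining
\[
Y_{j+1}\le c_0\, b^j\,\big(Y_0\, b^{-j/\beta}\big)^{1+\beta} = c_0\, Y_0^{1+\beta}\, b^{\,j-\frac{j(1+\beta)}{\beta}} = c_0\, Y_0^{1+\beta}\, b^{-j/\beta},
\]
where I used the algebraic identity $j-\frac{j(1+\beta)}{\beta}=-\frac{j}{\beta}$. To advance the induction I then need $c_0 Y_0^{1+\beta} b^{-j/\beta}\le Y_0 b^{-(j+1)/\beta}$; dividing through by $Y_0 b^{-j/\beta}>0$, this is equivalent to $c_0 Y_0^{\beta}\le b^{-1/\beta}$, i.e.\ to $Y_0\le c_0^{-1/\beta} b^{-1/\beta^2}$, which is exactly the smallness assumption in the statement. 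Thus the induction closes and the proof is complete.

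This is the standard "fast geometric decay" iteration, so I do not anticipate any genuine obstacle; the only thing to watch is the bookkeeping of the exponents in the inductive step and the fact that the hypothesis on $Y_0$ is consumed at \emph{every} step of the induction, not merely used once. As an equivalent alternative one could take base-$b$ logarithms, set $z_j=\log_b Y_j$, reduce the recursion to the affine inequality $z_{j+1}\le j+(1+\beta)z_j+\log_b c_0$, and verify that the resulting sequence tends to $-\infty$ under the given bound on $z_0$; but the direct induction above is the most economical route.
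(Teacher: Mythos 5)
Your induction is correct: the exponent bookkeeping checks out, and the inductive step closes precisely under the stated smallness condition $Y_0\le c_0^{-1/\beta}b^{-1/\beta^2}$. The paper does not prove this lemma but cites DiBenedetto, and your argument (establishing $Y_j\le Y_0\,b^{-j/\beta}$ by induction) is exactly the standard proof given there.
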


\begin{proposition}\label{prop_localbound}
Suppose that $u$ is a subsolution to \eqref{maineq}. For $x_0\in\R^n$,  $r>0$ and $t_0\in(r^{2s},T)$, set $U^-(r) = U^-(x_0,t_0,r)$.  
Then for any $\theta\in(0,1)$ and any $\delta\in(0,1)$, 
\begin{align*}
\sup_{U^-(\theta r)}u &\le  \frac{C\delta^{-\frac{n+2s}{2s}}}{(1-\theta)^{\frac{n+2s}{2}}}\left(\fint_{U^-(r)}u_+^2dxdt\right)^{\frac12}\\
&+ \delta \tail_\infty(u_+;x_0,r/2,t_0-r^{2s},t_0).
\end{align*}
\end{proposition}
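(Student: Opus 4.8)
The plan is to run a De~Giorgi iteration on the positive part of $u$, built on the Caccioppoli inequality for subsolutions (Lemma~\ref{lemma_subsolcacc}) and the parabolic Sobolev inequality (Theorem~\ref{parabsobolev}); the only point that goes beyond the classical local scheme is the absorption of the nonlocal tail with a freely small constant. Two reductions come first. Since $\sup_{U^-(\theta r)}u$ is nondecreasing in $\theta$ while the right-hand side is nondecreasing in $(1-\theta)^{-1}$, the case $\theta<1/2$ follows from the case $\theta=1/2$ via $U^-(\theta r)\subset U^-(r/2)$, so we may assume $\theta\in[1/2,1)$ (hence $\theta r\ge r/2$). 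Moreover, for any constant $k\ge0$ the function $u-k$ is again a subsolution (both $\partial_t$ and $\L$ annihilate constants, and $\E$ only sees differences), so $(u-k)_+$ is a subsolution by Lemma~\ref{lemma_subsol}; this is what lets us apply Lemma~\ref{lemma_subsolcacc} to truncations of $u$. Fix $k>0$ to be chosen later, and for $j\ge0$ set $\rho_j=\theta r+(1-\theta)2^{-j}r$, $k_j=k(1-2^{-j})$, $Q_j=B_{\rho_j}(x_0)\times(t_0-\rho_j^{2s},t_0)$, $v_j=(u-k_j)_+$, and $Y_j=|Q_0|^{-1}\int_{Q_j}v_j^2\,dx\,dt$, a nonincreasing sequence. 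Since $Q_0=U^-(r)$, $\bigcap_jQ_j=U^-(\theta r)$ and $v_j\downarrow(u-k)_+$, it suffices to prove $Y_j\to0$, for that yields $\sup_{U^-(\theta r)}u\le k$.

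For the $j$-th step I would apply Lemma~\ref{lemma_subsolcacc} to the subsolution $v_j$ with a spatial cutoff $\psi_j\in C_c^\infty(B_{\rho_j})$ equal to $1$ on $B_{\rho_{j+1}}$ and supported in $B_{(\rho_j+\rho_{j+1})/2}$, $|\nabla\psi_j|\le C2^j/((1-\theta)r)$, and a time cutoff vanishing below $t_0-\rho_j^{2s}$, equal to $1$ on $(t_0-\rho_{j+1}^{2s},t_0)$, $|\partial_t\eta_j|\le C2^j/((1-\theta)r^{2s})$. As $v_{j+1}=(v_j-(k_{j+1}-k_j))_+\le v_j$ is a truncation, so $[v_{j+1}]_{H^s}\le[v_j]_{H^s}$ (Remark~\ref{rmk1}), the left side of Lemma~\ref{lemma_subsolcacc} dominates the energy of $v_{j+1}$ on $Q_{j+1}$; on the right side, the gradient term and the $\partial_t\eta^2$ term are both $\le\tfrac{C4^j}{(1-\theta)^2r^{2s}}\int_{Q_j}v_j^2$. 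For the tail term, since $\rho_j\ge r/2$ and $v_j\le u_+$, and since $|x-y|^{-1}\le\tfrac{C2^j}{1-\theta}|y-x_0|^{-1}$ for $x\in\supp\psi_j$, $y\notin B_{\rho_j}$, the spatial tail factor is $\le\bigl(\tfrac{C2^j}{1-\theta}\bigr)^{n+2s}(r/2)^{-2s}\mathcal{T}$ with $\mathcal{T}:=\tail_\infty(u_+;x_0,r/2,t_0-r^{2s},t_0)$ (the relevant times all lie in $(t_0-r^{2s},t_0)$); bounding the leftover $\int_{Q_j}v_j$ by $\tfrac{C2^j}{k}\int_{Q_j}v_j^2+(k_{j+1}-k_j)|\{v_j>0\}\cap Q_j|$ and the level-set measure by Chebyshev at the preceding level, the tail term too becomes a product of powers of the $Y$'s divided by $k$. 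Feeding the resulting energy estimate into Theorem~\ref{parabsobolev} with $\kappa=\tfrac{n+2s}{n}$ and then using H\"older together with $|\{v_{j+1}>0\}\cap Q_{j+1}|\le(k_{j+1}-k_j)^{-2}\int_{Q_j}v_j^2$, one arrives at a recursion
\[
Y_{j+1}\le\frac{C\,B^{\,j}}{(1-\theta)^{N_1}}\Bigl(1+\tfrac{\mathcal{T}}{k}\Bigr)^{N_2}k^{-N_3}Y_{j-1}^{\,a}Y_j^{\,b},\qquad a+b>1,
\]
with $B>1$ and all exponents depending only on $n,s$ (the powers of $r$ cancelling by scaling).

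By Lemma~\ref{lemma_iteration}, in its two-level form, this gives $Y_j\to0$ as soon as $Y_0$ lies below an explicit threshold of the shape $Y_0\le c(1-\theta)^{n+2s}k^2(1+\mathcal{T}/k)^{-(n+2s)/s}$. It then remains to choose
\[
k=\frac{C\,\delta^{-(n+2s)/(2s)}}{(1-\theta)^{(n+2s)/2}}\Bigl(\fint_{U^-(r)}u_+^2\,dx\,dt\Bigr)^{1/2}+\delta\,\mathcal{T}.
\]
The first summand makes $Y_0=\fint_{U^-(r)}u_+^2$ meet the $\mathcal{T}$-free part of the threshold, while $\delta\mathcal{T}\le k$ gives $\mathcal{T}/k\le\delta^{-1}$, so $(1+\mathcal{T}/k)^{-(n+2s)/s}\ge c\delta^{(n+2s)/s}$ and the whole threshold is met. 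Hence $Y_j\to0$, i.e.\ $\sup_{U^-(\theta r)}u\le k$, which is the asserted bound.

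The part I expect to be genuinely delicate is the tail: one must check that it always appears multiplied by the measure of a super-level set, so that through the Caccioppoli--Sobolev--H\"older chain its only effect is to replace $k$ by $k(1+\mathcal{T}/k)^{\mathrm{const}}$ in the iteration threshold, and then to keep careful enough track of the exponents of $(1-\theta)^{-1}$, $k^{-1}$ and $\mathcal{T}$ that the admissible $k$ comes out exactly as $C\delta^{-(n+2s)/(2s)}(1-\theta)^{-(n+2s)/2}(\fint_{U^-(r)}u_+^2)^{1/2}+\delta\mathcal{T}$. Keeping the time intervals and the radius $r/2$ aligned throughout, so that the surviving tail is precisely $\tail_\infty(u_+;x_0,r/2,t_0-r^{2s},t_0)$, is a further bookkeeping point.
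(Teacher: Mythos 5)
Your argument is in substance the paper's own proof: a De~Giorgi iteration on truncations $(u-k_j)_+$ driven by Lemma~\ref{lemma_subsolcacc} and Theorem~\ref{parabsobolev}, with the nonlocal term tamed by converting $\int v_j$ into $k^{-1}\int v_j^2$ plus a level-set term, and then imposing $k\ge\delta\,\tail_\infty(u_+;x_0,r/2,t_0-r^{2s},t_0)$ so that the tail only contributes a factor $\mathcal{T}/k\le\delta^{-1}$; the final choice of $k$ is identical to the paper's. Two organizational differences: the paper inserts half-levels $\tilde k_j\in(k_j,k_{j+1})$ so that $2^{-j-2}\tilde k\,\tilde w_j\le w_j^2$ yields a one-level recursion to which Lemma~\ref{lemma_iteration} applies directly, whereas you use Chebyshev at the preceding level and obtain a two-level recursion. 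The ``two-level form'' of Lemma~\ref{lemma_iteration} you invoke is not stated in the paper, but since your $Y_j$ is nonincreasing you have $Y_{j-1}^{a}Y_j^{b}\le Y_{j-1}^{a+b}$ and the stated lemma applies to the subsequence $Y_{2i}$, so this is harmless.

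The one point I would not let pass is the exponent of $(1-\theta)^{-1}$. You run the iteration directly at a general $\theta\in[1/2,1)$, so the factor $\bigl(\tfrac{C2^j}{1-\theta}\bigr)^{n+2s}$ coming from the comparison $|x-y|^{-1}\le C2^j(1-\theta)^{-1}|y-x_0|^{-1}$ enters the recursion constant $c_0$, and Lemma~\ref{lemma_iteration} then requires $Y_0\lesssim c_0^{-1/\beta}$ with $1/\beta=(n+2s)/(2s)$. This forces $k\gtrsim(1-\theta)^{-(n+2s)^2/(4s)}\bigl(\fint_{U^-(r)}u_+^2\bigr)^{1/2}$, which is strictly worse than the stated $(1-\theta)^{-(n+2s)/2}$ for every $n\ge1$; your displayed threshold $Y_0\le c(1-\theta)^{n+2s}k^2(1+\mathcal{T}/k)^{-(n+2s)/s}$ omits the power $1/\beta$ on the $(1-\theta)$ factor. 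The paper obtains the clean exponent by proving only the case $\theta=1/2$ and then covering $U^-(\theta r)$ by cylinders of radius $(1-\theta)r/2$ contained in $U^-(r)$, where $(1-\theta)^{-(n+2s)/2}$ arises simply as $\bigl(|U^-(r)|/|U^-((1-\theta)r)|\bigr)^{1/2}$. Your version of the proposition, with some larger $m(n,s)$ in place of $(n+2s)/2$, is still sufficient for Theorems~\ref{thm_localbound} and~\ref{thm_localboundfinal}, where only an unspecified exponent $m(n,s)$ is claimed, but it does not literally yield the stated inequality for $\theta$ near $1$; either switch to the covering route or restate the exponent.
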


\begin{proof}
We give the proof for $\theta = 1/2$. The general assertion then follows from a covering argument. 
Let 
	\begin{align*}
		&r_0 = r, \quad r_j = \frac{r}{2}(1+2^{-j}),\quad \tilde r_j = \frac{r_j+r_{j+1}}{2} \quad \delta_j = 2^{-j-3} r, \quad j=1,2,\ldots
	\end{align*}
	and let 
	\begin{align*}
		&U_j = B_j \times \Gamma_j = B_{r_j}(x_0) \times (t_0-r_j^{2s}, t_0),\\
        &\tilde U_j = \tilde B_j \times \tilde \Gamma_j = B_{\tilde r_j}(x_0) \times (t_0-\tilde r_j^{2s}, t_0).
	\end{align*}
	We choose nonnegative test functions $\psi_{j} \in C_c^\infty(\tilde B_j)$ and $\zeta_j \in C^{\infty}(\Gamma_j)$ satisfying 
	\begin{equation}\label{subdeltaspace}
	\psi_j\equiv 1\text{ in }B_{j+1},\quad \zeta_j\equiv 1\text{ on }\Gamma_{j+1},\quad \zeta_j\equiv 0\text{ on }\Gamma_{j}\setminus\tilde\Gamma_j
	\end{equation}	
 such that 
	\begin{equation} \label{subeqphi}
		|\nabla\psi_j| \leq \frac{C}{r}2^{j+3} = C\delta_j^{-1}, \quad \left| \frac{\partial \zeta_j}{\partial t}\right| \leq \frac{C}{2s r^{2s}}2^{2s(j+3)} = C\delta_j^{-2s}.
	\end{equation}
For 
\begin{align}\label{kge}
k\in \R\quad\text{and }\tilde k\ge \delta\tail_\infty(u_+;x_0,r/2,t_0-r^{2s},t_0),
\end{align}
we set 
\begin{align*}
&k_j = k + (1-2^{-j})\tilde k,\quad \tilde k_j = \frac{k_{j+1}+k_j}{2},\\
&w_j = (u-k_j)_+,\quad \tilde w_j = (u-\tilde k_j)_+. 
\end{align*} 
We note that since $\tilde k_j>k_j$, we have $w_j\ge \tilde w_j$.  Thus if $\tilde w_j>0$, then $u>\tilde k_j$ and so $w_j=u-k_j>\tilde k_j-k_j$. It follows that
\begin{equation}\label{tilde}
2^{-j-2}\tilde k\tilde w_j = (\tilde k_j-k_j)\tilde w_j\le w_j^2.
\end{equation}

By the Sobolev embedding theorem, with $\kappa = \frac{n+2s}{n}$, there holds,  
\begin{align}\label{subsobolevUj+1}
&\int_{\Gamma_{j+1}}\fint_{B_{j+1}}|\tilde w_j|^{2\kappa}dxdt\\
&\quad\le Cr_j^{2s-n}\int_{\Gamma_{j+1}}\int_{B_{j+1}}\int_{B_{j+1}}\frac{|\tilde w_j(x,t)-\tilde w_j(y,t)|^2}{|x-y|^{n+2s}}dxdydt\notag\\
&\quad\quad\times\left(\sup_{\Gamma_{j+1}}\fint_{B_{j+1}}|\tilde w_j|^{2}\right)^{\frac{2s}{n}} \notag\\
&  \quad\quad + C\int_{\Gamma_{j+1}}\fint_{B_{j+1}}\tilde w_j^2dxdt\left(\sup_{\Gamma_{j+1}}\fint_{B_{j+1}}|\tilde w_j|^{2}\right)^{\frac{2s}{n}}\notag\\
&\quad\quad = Cr_{j+1}^{2s-n}I_1\times \left(\frac{I_2}{|B_{j+1}|}\right)^{\frac{2s}{n}} + C\int_{\Gamma_{j+1}}\fint_{B_{j+1}}\tilde w_j^2dxdt\times \left(\frac{I_2}{|B_{j+1}|}\right)^{\frac{2s}{n}}, \notag 
\end{align}
where 
\[
I_1 = \int_{\Gamma_{j+1}}\int_{B_{j+1}}\int_{B_{j+1}}\frac{|\tilde w_j (x,t)-\tilde w_j(y,t)|^2}{|x-y|^{n+2s}}dxdydt
\]
and 
\[
I_2 = \sup_{\Gamma_{j+1}}\int_{B_{j+1}}|\tilde w_j|^{2}.
\]
To estimate $I_1$ and $I_2$ we use the Cacciopollo inequality in Lemma \ref{lemma_subsolcacc}, with $r=\tilde r_j$, $\tau_2=t_0$, $\tau_1 = t_0-r_{j+1}^{2s}$ and $\ell = \tilde r_j^{2s}-r_{j+1}^{2s}$. This leads to  
\begin{align}\label{subI1I2}
&I_1 + I_2\\
& \le \int_{\tilde\Gamma_j}\int_{\tilde B_j}\int_{\tilde B_j}|\tilde w_j(x,t)\psi_j(x)-\tilde w_j(y,t)\psi_j(y)|^2\eta_j^2(t)d\mu dt\notag\\
& + \frac{1}{2}\sup_{\Gamma_{j+1}}\int_{\tilde B_j}\tilde w_j^2(x,t)\psi_j^2(x)dx \notag\\
& \le C\int_{\Gamma_j}\int_{ B_j}\int_{B_j}\max\{\tilde w_j^2(x,t),\tilde w_j^2(y,t)\}|\psi_j(x)-\psi_j(y)|^2\eta_j^2(t)d\mu dt\notag\\
& + C\sup_{\substack{t\in \Gamma_j\\x\in\supp\psi_j}}\int_{\R^n\setminus B_j}\frac{(\tilde w_j)_+(y,t)dy}{|x-y|^{n+2s}}\int_{\Gamma_j}\int_{B_j}\tilde w_j(x,t)\psi_j^2(x)\eta_j^2(t)dxdt\notag\\
& + \frac{1}{2}\int_{\Gamma_j}\int_{B_j}\tilde w_j^2(x,t)\psi_j^2(x)\partial_t\eta_j^2(t)dxdt = J_1 + J_2 + J_3.\notag 
\end{align}
Due to our assumption on $\psi_j$, 
\begin{align}\label{subJ1}
J_1 & \le 2C\Lambda\frac{2^{2(j+3)}}{r^2} \sup_{x\in \tilde B_j}\int_{B_j}\frac{|x-y|^2dy}{|x-y|^{n+2s}}\int_{\Gamma_j}\int_{B_j}\tilde w_j^2(x,t)dxdt\\
&\le C\frac{2^{2(j+3)}}{r_j^{2s}}\int_{\Gamma_j}\int_{B_j}w_j^2(x,t)dxdt.\notag
\end{align}
To estimate $J_2$, we first observe that, due to \eqref{tilde}, 
\begin{align}\label{subJ21}
& \int_{\Gamma_j}\int_{B_j}\tilde w_j(x,t)\psi_j^2(x)\eta_j^2(t)dxdt \le \frac{2^{j+2}}{\tilde k}\int_{\Gamma_j}\int_{B_j} w_j^2(x,t)dxdt.  
\end{align}
Without loss of generality, it may be assumed that $x_0=0$. 
Recalling \eqref{subdeltaspace}, we have, for $x\in\supp\psi_j$ and $y\in\R^n\setminus B_j$, 
\[
\frac{1}{|x-y|} = \frac{1}{|y|}\frac{|y|}{|x-y|}\le \frac{1}{|y|}\frac{|x|+|x-y|}{|x-y|} \le \frac{1}{|y|}\left(1 + \frac{r}{\delta_j}\right)\le 2^{j+4}\frac{1}{|y|}. 
\]
Thus 
\begin{align}\label{subJ22}
 &\sup_{\substack{t\in \Gamma_j\\x\in\supp\psi_j}}\int_{\R^n\setminus B_j}\frac{(\tilde w_j)_+(y,t)dy}{|x-y|^{n+2s}} \\
 &\qquad\le 2^{(j+4)(n+2s)}\sup_{t_0-r^{2s}<t<t_0}\int_{\R^n\setminus B_{r/2}}\frac{(w_0)_+(y,t)dy}{|y|^{n+2s}}\notag\\
 & \qquad\le \frac{2^{(j+4)(n+2s)}}{r^{2s}}\tail_\infty(w_0;x_0,r/2,t_0-r^{2s},t_0). \notag
\end{align}
From \eqref{subJ21} and \eqref{subJ22} we conclude 
\begin{align}\label{subJ2}
&J_2 \le \frac{2^{(j+4)(n+2s)}}{\delta r_j^{2s}}\int_{\Gamma_j}\int_{B_j} w_j^2(x,t)dxdt.
\end{align}
From \eqref{subeqphi} we get 
\begin{equation}\label{subJ3}
J_3 \le \frac{C2^{2sj}}{r_j^{2s}}\int_{\Gamma_j}\int_{B_j}w_j^2dxdt. 
\end{equation}
Using the estimates \eqref{subJ1}, \eqref{subJ2} and \eqref{subJ3} for $J_1$, $J_2$ and $J_3$ in \eqref{subI1I2}, we find,  
\begin{align}\label{subI1I2final}
I_1 + I_2& \le \frac{C2^{(n+2s)j}}{\delta}r_j^{-2s}\int_{\Gamma_{j}}\int_{B_j} w_j^{2}(x,t)dxdt.
\end{align}
Similarly to the inequality \eqref{tilde}, we have 
\begin{equation}\label{tilde2}
\tilde w_j^{2\kappa}\ge (k_{j+1}-\tilde k_j)^{2(\kappa-1)}w_{j+1}^2 = \left(2^{-j-2}\tilde k\right)^{2(\kappa-1)}w_{j+1}^2. 
\end{equation}
We now estimate the left hand side of \eqref{subsobolevUj+1} with \eqref{tilde2} and its right hand side with \eqref{subI1I2final}. This yields
\begin{align*}
&\left(2^{-j-2}\tilde k\right)^{2(\kappa-1)}\fint_{\Gamma_{j+1}}\fint_{B_{j+1}}|w_{j+1}|^{2}dxdt \\
&\le r_{j}^{-2s}C
\frac{2^{(n+2s)j}}{\delta}\int_{\Gamma_{j}}\fint_{B_j} w_j^{2}dxdt\left(\frac{2^{(n+2s)j}}{\delta}r_j^{-2s}\int_{\Gamma_{j}}\fint_{B_j} w_j^{2}dxdt\right)^{\frac{2s}{n}}\\
&\le C\left(\frac{2^{(n+2s)j}}{\delta}\fint_{\Gamma_{j}}\fint_{B_j} w_j^{2}dxdt\right)^{\frac{n+2s}{n}}.
\end{align*}
Let 
\[
A_j = \left(\fint_{U_j}w_j^2dxdt\right)^{\frac12}. 
\]
Then 
\begin{equation*}
\frac{A_{j+1}}{\tilde k}\le C\frac{\alpha^j}{\delta^\kappa}\left(\frac{A_j}{\tilde k}\right)^\kappa. 
\end{equation*}
Lemma \ref{lemma_iteration}, with $Y_j = A_j/\tilde k$ and $\beta = \kappa-1 = 2s/n$, says that $\lim_jA_j=0$ if 
\begin{equation}\label{A0}
\frac{A_0}{\tilde k} \le \left(\frac{\delta^\kappa}{C}\right)^{\frac{n}{2s}}\alpha^{-\frac{n^2}{2s}}. 
\end{equation}
Whence we see that if $C = C(n,s)$ is large enough, the choice 
\[
\tilde k = C\delta^{-\frac{n+2s}{2s}}\left(\fint_{U^+(r)}u_+^2dxdt\right)^{\frac12} + \delta \tail_\infty(u_+;x_0,r/2,t_0-r^{2s},t_0)
\]
guarantees that both \eqref{kge} and \eqref{A0} hold. 
It follows that $(u-\tilde k)_+ = 0$ in $U(r)$, which proves the proposition.


\end{proof}

\subsection{Proof of Theorem \ref{thm_localbound} and \ref{thm_localboundfinal}}


\begin{proof}
Let $0<\rho<r$ and assume that $r^{2s}<t_0<T$. From Proposition \ref{prop_localbound} and Young's inequality we obtain, for any $\gamma\in[1/2,1)$, 
\begin{align}\label{bdd1}
\sup_{U^-(\gamma \rho)}u &\le  \frac{C}{(1-\gamma)^{\frac{n+2s}{2}}}\delta^{-\frac{n+2s}{2s}}\left(\fint_{U^-(\rho)}u_+^2dxdt\right)^{\frac12}\\
&+ \delta \tail_\infty(u_+;x_0,\rho/2,t_0-\rho^{2s},t_0)\notag \\
&\le \frac12\sup_{U^-(\rho)} + \frac{C}{(1-\gamma)^{m_2}}\delta^{-m_1}\fint_{U^-(\rho)}u_+dxdt\notag\\
& + \delta \tail_\infty(u_+;x_0,\rho/2,t_0-\rho^{2s},t_0),\notag 
\end{align}
where $m_i = m_i(n,s)>0$, $i=1,2$. 
For any $\sigma \in(1/2,1]$, choose $\gamma$ so that $\theta = \sigma\gamma\ge 1/2$. Upon replacing $\rho$ by $\sigma \rho$ in \eqref{bdd1}, we find that 
\begin{align}
\sup_{U^-(\theta \rho)}u &\le \frac12\sup_{U^-(\sigma \rho)}u + \frac{C\sigma^{m_2}}{(\sigma-\theta)^{m_2}}\delta^{-m_1}\fint_{U^-(\sigma \rho)}u_+dxdt\\
&\quad + \delta \tail_\infty(u_+;x_0,\sigma\rho/2,t_0-(\sigma\rho)^{2s},t_0)\notag\\
& \le \frac12\sup_{U^-(\sigma \rho)}u + \frac{C}{(\sigma-\theta)^{m_2}}\delta^{-m_1}\fint_{U^-(\rho)}u_+dxdt\notag\\
&\quad + 2^{2s}\delta \tail_\infty(u_+;x_0,\rho/4,t_0-\rho^{2s},t_0)\notag . 
\end{align}
An application of Lemma \ref{lemma_HL} gives 
\begin{align}\label{sup1}
\sup_{U^-(\theta \rho)}u& \le \frac{C}{(\sigma-\theta)^{m_2}}\delta^{-m_1}\fint_{U^-(\rho)}u_+dxdt\\
& + \delta \tail_\infty(u_+;x_0,\rho/4,t_0-\rho^{2s},t_0).  \notag
\end{align}
By Lemma \ref{tail_inf_le_tail}
\begin{align}\label{tail1}
\tail_\infty(u_+;x_0,\rho/4,t_0-\rho^{2s},t_0) & \le  C\e^{-1}\tail(u_+;x_0,\rho/4,t_0-(1+\e) \rho^{2s},t_0)\\
&+ C\e^{-1}\fint_{t_0-(1+\e)\rho^{2s}}^{t_2}\fint_{B_\rho(x_0)}u_+dxdt. \notag
\end{align}
Let $r = (1+\e)^{\frac{1}{2s}}\rho$ and let $\beta = (1+\e)^{-\frac{1}{2s}}$. Then 
\[
\e^{-1}  = (\beta^{-2s}-1)^{-1} = \beta^{2s}(1-\beta^{2s})^{-1} \le \frac{1}{1-\beta^{2s}}. 
\]
It can be checked using elementary calculus that there exists $m_3(s)>0$ such that $1-\beta^{2s}\ge (1-\beta)^{m_3}$. Thus, from \eqref{sup1}, with $\sigma = 1$ and \eqref{tail1} we get 
\begin{align*}
\sup_{U^-(\theta \rho)}u& \le \frac{C}{(1-\theta)^{m_2}}\delta^{-m_1}\fint_{U^-(\rho)}u_+dxdt\\
& + \delta \frac{C}{(1-\beta)^{m_3}}\tail(u_+;x_0,\rho/4,t_0- r^{2s},t_0)\\
&+ \delta \frac{C}{(1-\beta)^{m_3}}\fint_{t_0-r^{2s}}^{t_2}\fint_{B_\rho(x_0)}u_+dxdt. 
\end{align*}
We now set, for $\lambda\in(1/2,1)$, $\theta = \beta = \sqrt{\lambda}$, so that $\theta \rho = \theta\beta r = \lambda r$. Using that 
\[
\frac{1}{1-\sqrt\lambda} = \frac{1+\sqrt\lambda}{1-\lambda}\le \frac{2}{1-\lambda},  
\]
we get 
\begin{align}\label{kuk}
&\sup_{U^-(\lambda r)}u \le \frac{C}{(1-\lambda)^{m}}\delta^{-m}\fint_{U^-(r)}u_+dxdt\\
& + \frac{C\delta}{(1-\lambda)^{m}}\tail(u_+;x_0,\rho/4,t_0- r^{2s},t_0)\notag\\
& \le \frac{C}{(1-\lambda)^{m}}\delta^{-m}\fint_{U^-(r)}u_+dxdt\notag \\
& + \frac{C\delta}{(1-\lambda)^{m}}\int_{t_0-r^{2s}}^{t_0}\int_{B_r\setminus B_{\rho/4}}\frac{u_+dxdt}{|x-x_0|^{n+2s}}\notag\\
& + \frac{C\delta}{(1-\lambda)^{m}}\tail(u_+;x_0,r,t_0- r^{2s},t_0)\notag\\
&\le \frac{C}{(1-\lambda)^{m}}\delta^{-m}\fint_{U^-(r)}u_+dxdt + 
\frac{C\delta}{(1-\lambda)^{m}}\tail(u_+;x_0,r,t_0- r^{2s},t_0).\notag
\end{align}
where $m = \max_{i=1,2,3}m_i$. We now assume the hypothesis of Theorem \ref{thm_localboundfinal}.  
Letting $\delta = (1-\lambda)^mM^{-1}$ and employing Lemma \ref{lemma_tail+-}, we find 
\begin{align*}
\sup_{U^-(\lambda r)}u& \le \frac{CM^m}{(1-\lambda)^{2m}}\fint_{U^-(r)}u_+dxdt\\
& + \frac{C}{M}\sup_{U^-(r)}u_+ + \frac{C}{M}\tail(u_-;x_0,r,t_0- r^{2s},t_0). 
\end{align*}
Setting $M = C\eta^{-1}$ for $\eta\in(0,1)$ and using once more Lemma \ref{lemma_HL}, as well as the assumption on the positivity of $u$, we arrive at 
\begin{align*}
\sup_{U^-(\lambda r)}u& \le \frac{C\eta^{-m}}{(1-\lambda)^{2m}}\fint_{U^-(r)}u_+dxdt\\
& + \eta\left(\frac{r}{R}\right)^{2s}\tail(u_-;x_0,R,t_0- r^{2s},t_0). 
\end{align*}
This proves Theorem \ref{thm_localboundfinal}. In the case of Theorem \ref{thm_localbound}, we proceed in the same way from inequality  \eqref{kuk}, but without using Lemma \ref{lemma_tail+-}.  
\end{proof}

\section{The Harnack Inequality}\label{sec_Harnack}

\subsection*{Proof of Theorem \ref{Harnack}}
\begin{proof}
Let $r/2\le r_2<r$. From Theorem \ref{thm_localboundfinal}, with $\delta=1$,  we get,   
\begin{align}\label{suprhalf}
&\sup_{U^-(t_0,r/2)}u  \le C\fint_{U^-(t_0,r_2)}udxdt + \left(\frac{r}{R}\right)^{2s}
\tail\left(u_-;x_0,R,t_0-r_2^{2s},t_0\right),  
\end{align}
where $C = C((r_2-r/2)/r)$. 
Let $r/2\le r_1<r$, let $T_0 = t_0+r^{2s}-r_2^{2s}$ and suppose that 
\[
u\ge 0\text{ in }B_R\times(T_0-r^{2s},T_0+r^{2s}).   
\]
From \eqref{whr1r2} in the proof of the weak Harnack inequality, we have 
\begin{align}\label{r1r2}
&\fint_{B_{r_2}\times(T_0-r^{2s},T_0-r^{2s}+r_2^{2s})}udxdt
\le C\inf_{B_{r_1}\times(T_0+r^{2s}-r_1^{2s},T_0+r^{2s})}u \\
&\qquad + C\left(\frac{r}{R}\right)^{2s}\tail_\infty(u_-;x_0,R,T_0-r^{2s},T_0+r^{2s}),\notag
\end{align}
where $$C = C\left(\frac{r-r_1}{r},\frac{r-r_2}{r}\right).$$ 
Set $r_1 = r/2$ and choose $r_2$ so that $r_2^{2s}=\alpha(r/2)^{2s}$ for some $1<\alpha<2^{2s}$.  
Then \eqref{r1r2} reads 
\begin{align}\label{r1r2final}
&\fint_{U^-(t_0,r_2)}udxdt
\le C\inf_{B_{r/2}\times(t_0+2r^{2s}-(1+\alpha)(r/2)^{2s},t_0+2r^{2s} -\alpha(r/2)^{2s})}u \\
&\qquad + C\left(\frac{r}{R}\right)^{2s}\tail_\infty(u_-;x_0,R,t_0-\alpha\left(r/2\right)^{2s},t_0+2r^{2s} - \alpha\left(r/2\right)^{2s}), \notag
\end{align}
where $C = C(\alpha)$. 
Let 
\[
t_1 = t_0+2r^{2s}-\alpha(r/2)^{2s}. 
\]
By Corollary \ref{tail_inf_le_tail_minus} and the fact that $u\ge 0$ in 
$B_R\times(t_0-r^{2s},t_1)$, 
\[
\tail_\infty\left(u_-;x_0,R,t_0-\alpha(r/2)^{2s},t_1\right)\le C(\alpha)\tail\left(u_-;x_0,R,t_0-r^{2s},t_1\right). 
\]
Hence we obtain from \eqref{r1r2final} and \eqref{suprhalf}, 
\begin{align*}
\sup_{U^-(t_0,r/2)} \le C\inf_{U^-(t_1,r/2)}u + C\left(\frac{r}{R}\right)^{2s}\tail(u_-;x_0,R,t_0-r^{2s},t_1), 
\end{align*}
with $C = C(\alpha)$. This completes the proof. 
\end{proof}

\bibliography{parabolicHarnack}{}
\bibliographystyle{plain}

\end{document}